\pgfplotsset{compat=newest}
\newtheorem{theorem}{Theorem}[section]
\newtheorem{proposition}[theorem]{Proposition}
\newtheorem{lemma}[theorem]{Lemma}
\theoremstyle{definition}
\newtheorem{definition}[theorem]{Definition}
\newtheorem{example}[theorem]{Example}
\newtheorem*{example*}{Example}
\newtheorem*{claim*}{Claim}
\newtheorem{remark}[theorem]{Remark}
\newtheorem*{remark*}{Remark}
\newcommand{\thmref}[1]{Theorem~\ref{#1}}
\newcommand{\propref}[1]{Proposition~\ref{#1}}
\newcommand{\secref}[1]{Section~\ref{#1}}
\newcommand{\lemref}[1]{Lemma~\ref{#1}}
\newcommand{\figref}[1]{Fig.~\ref{#1}}
\newcommand{\remref}[1]{Remark~\ref{#1}}
\newcommand{\eqnref}[1]{Equation~\eqref{#1}}
\newcommand{\calG}{{\mathcal G}}
\newcommand{\calS}{{\mathcal S}}
\newcommand{\calT}{{\mathcal T}}
\newcommand{\T}{{\mathcal T}}
\newcommand{\F}{{\mathbb F}}
\newcommand{\R}{{\mathbb R}}
\newcommand{\from}{\colon \thinspace}
\newcommand{\st}{{\, \big| \,}}
\newcommand{\ep}{\epsilon}
\newcommand{\Teich}{Teich\-m\"u\-ller\ }
\newcommand{\param}{{\mathchoice{\mkern1mu\mbox{\raise2.2pt\hbox{$
\centerdot$}}
\mkern1mu}{\mkern1mu\mbox{\raise2.2pt\hbox{$\centerdot$}}\mkern1mu}{
\mkern1.5mu\centerdot\mkern1.5mu}{\mkern1.5mu\centerdot\mkern1.5mu}}}
\DeclareMathOperator{\Outer}{CV}
\DeclareMathOperator{\out}{Out}
\DeclareMathOperator{\Pre}{Pre}
\DeclareMathOperator{\CH}{CH}
\newcommand{\cv}{{\mathrm{cv}_n}}
\newcommand{\CV}{{\Outer_{\! n}}}
\newcommand{\Out}{\out(\F_n)}  
\newcommand{\Bi}{B_{\rm{in}}}
\newcommand{\Bo}{B_{\rm{out}}}
\newcommand{\be}{{\overline e}}
\newcommand{\by}{{\overline y}}
\newcommand{\bx}{{\overline x}}
\newcommand{\bphi}{{\overline \phi}}
\newcommand{\bgamma}{{\boldsymbol \gamma}}
\begin{document}

\title{Convexity of balls in the outer space}

  \author[Y.~Qing]{Yulan Qing}
  \address{Dept. of Mathematics\\ 
  University of Toronto\\
  Toronto, Ontario, Canada M5S 2E4}
  \email{\tt yulan.qing@utoronto.ca}
  
  \author[K.~Rafi]{Kasra Rafi}
  \address{Dept. of Mathematics\\ 
  University of Toronto\\
  Toronto, Ontario, Canada M5S 2E4}
  \email{\tt rafi@math.toronto.edu}
  
\date{\today}

\begin{abstract}
In this paper we study the convexity properties of geodesics and balls in Outer space
equipped with the Lipschitz metric. 
We introduce a class of geodesics called balanced folding paths and show 
that, for every loop $\alpha$, the length of $\alpha$ along a balanced folding path
is not larger than the maximum of its lengths at the end points. 
This implies that out-going balls are weakly convex. We then
show that these results are sharp by providing several counterexamples. 
\end{abstract}

\maketitle

\section{Introduction}
Let $\Out$ be the group of outer automorphisms of a free group of rank $n$,
and let Outer space $\CV$ be the space of marked metric graphs of rank $n$. 
The Outer space, which is a simplicial complex with an $\Out$ action, 
was introduced by Culler-Vogtmann \cite{outerspace} to study $\Out$ as an analogue of the 
action of mapping class group on \Teich space or the action of a lattice on a symmetric 
space. The Outer space can be equipped with a natural asymmetric metric, 
namely the Lipschitz metric. For points $x,y \in \CV$, 
\[
d(x,y) = \inf_\phi \log (L_\phi) 
\]
where $\phi \from x \to y$ is a difference of markings map from $x$ to $y$ and 
and $L_\phi$ is the Lipschitz constant of the map $\phi$. The geometry of $\CV$ equipped
with the Lipschitz metric is closely related to the large scale geometry of 
$\Out$ and has recently been the subject of extensive study (see for example, \cite{vogtmann:GO,contracting,subfactor}). 

In this paper, we examine the convexity properties of geodesics and balls in $\CV$. 
However, we need to be careful with our definitions since the metric $d$ is asymmetric 
(the ratio of $d(x,y)$ and $d(y,x)$ can be arbitrarily large \cite{asymmetry}) and there maybe
many geodesics connecting two points in $\CV$. We introduce the notion 
of a \emph{balanced folding path} along which we have more control over the lengths of 
loops. Recall that, a geodesic in $\CV$ (not necessarily parametrized with unit speed) 
is a map $\gamma \from [a,b] \to \CV$ so that, for $a\leq t \leq b$, we have  
\[
d\big(x, \gamma(t)\big) + d\big( \gamma(t), y \big) = d(x,y). 
\]
We often denote the image of $\gamma$ by $[x,y]$. The length of a loop $\alpha$ in a graph 
$x$ is denoted by $|\alpha|_x$ and we use $|\alpha|_t$ to denote the length of $\alpha$ 
at $\gamma(t)$. The balanced folding path from $x$ to $y$ is denoted by $[x,y]_{\rm bf}$.
We show that, lengths of loops along a balanced folding path satisfy a weak notion
of convexity. 

\begin{theorem} 
\label{Thm:Length} 
Given points $x, y \in \CV$, there exists a geodesic $[x,y]_{\rm bf}$ from $x$ to $y$ 
so that, for every loop $\alpha$, and every time $t$, 
\[
|\alpha|_t \leq \max \big(  |\alpha|_x, |\alpha|_y \big).
\]
\end{theorem}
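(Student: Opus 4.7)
The plan is to construct $[x,y]_{\rm bf}$ by refining the standard folding-path construction with a carefully chosen apportionment of folding rates at the illegal turns, and then to verify the length inequality by a maximum-principle argument on loop lengths along the path.

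First I would fix an optimal difference-of-markings map $\phi \from x \to y$ realizing the Lipschitz constant $L_\phi = e^{d(x,y)}$, and enumerate the finite set of illegal turns of $\phi$. In the Francaviglia--Martino framework, a folding path from $x$ to $y$ is obtained by simultaneously folding at these illegal turns, each at some positive rate, with the graph continuously rescaled to unit volume. Any positive choice of rates yields a geodesic in the Lipschitz metric, so the freedom lies entirely in how the rates are distributed across the illegal turns. The balanced folding path is the one in which, at every time $t$, the rates are chosen so that every loop $\alpha$ whose current length equals $\max(|\alpha|_x,|\alpha|_y)$ has non-positive length derivative with respect to $t$.

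The key structural observation is that $\tfrac{d}{dt}|\alpha|_t$ is a linear function of the rate vector, whose coefficients are determined by the combinatorial pattern of how $\alpha$ traverses the illegal turns, plus a uniform term from the volume rescaling. Because the current graph lies in a finite-dimensional simplex of $\CV$, only finitely many such linear functionals are active simultaneously, and the collection of \emph{tight} loops (those achieving the maximum) remains combinatorially finite. I would then prove existence of a rate vector making the derivative non-positive on all tight loops via a Farkas-type separation argument: if no such rate existed, a positive linear combination of the tight-loop derivatives would be strictly positive on the entire simplex of admissible rates, forcing the corresponding weighted length to grow in every admissible direction of folding --- but this contradicts the tightness assumption, since the direction from $x$ to $y$ is itself admissible and $y$ witnesses the bound.

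With balanced rates prescribed at every $t$, the length inequality follows by a standard maximum-principle argument: if some loop $\alpha_0$ first violated the bound at a time $t^\ast$, then $\alpha_0$ would be tight at $t^\ast$ yet have positive length derivative there, contradicting the construction. The rate selection is locally constant away from a discrete set of combinatorial events (where the tight-loop set or the illegal-turn structure changes), so the balanced path assembles into a piecewise-smooth family; that $[x,y]_{\rm bf}$ is an honest geodesic then reduces to the standard fact that any folding path arising from an optimal map with strictly positive rates at all illegal turns realizes the Lipschitz distance between its endpoints. The main obstacle will be the feasibility step --- showing that for every configuration of tight loops and illegal turns there always exists a positive rate vector driving all tight-loop derivatives to be non-positive. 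This demands a delicate combinatorial understanding of how loops cross illegal turns versus how folding affects their lengths (including the volume-renormalization contribution), and is where the technical heart of the argument lies.
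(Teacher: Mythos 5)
Your high-level strategy---fold at rates chosen so that tight loops have non-positive length derivative, then invoke a maximum principle---is in the same spirit as the paper, but you leave the crucial \emph{feasibility} step as an acknowledged gap, and the Farkas-type sketch you give for it does not work. The claimed contradiction (``if no such rate existed, a positive linear combination of tight-loop derivatives would be strictly positive on the entire rate simplex, but the direction from $x$ to $y$ is admissible and $y$ witnesses the bound'') conflates a single infinitesimal rate vector with the whole finite-length path to $y$. Tightness of $\alpha$ at time $t$, together with $|\alpha|_y \le \max(|\alpha|_x,|\alpha|_y)$, only tells you that the \emph{average} derivative along the remainder of the path to $y$ is non-positive; it does not produce an admissible rate vector making the derivative non-positive \emph{now}, which is exactly what a Farkas alternative would require you to rule out. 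You also assert that only finitely many loops are simultaneously tight; this needs justification since candidate loops are a priori an infinite family and the active constraints can change as the simplex degenerates.

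The paper does not prove feasibility by an abstract LP-duality existence argument; it \emph{constructs} the rates explicitly. For each point $p$ of the codomain tree, it weights the sub-gates by how much of the branching of the convex hull of $\Phi^{-1}(p)$ they account for (a combinatorial peeling procedure), integrates to get per-sub-gate length-loss contributions $\ell_\tau$, and then sets $s_\tau = \sum_{\hat\tau \supseteq \tau} \ell_{\hat\tau}/(|\hat\tau|-1)$. The key inequalities (Lemmas~\ref{lengthlossspeed} and \ref{Vanishing-Path}) bound both the total folding speed $|\calS|$ and, for any loop, the total length of the ``vanishing'' subpaths by sums of the chosen $s_\tau$, and substituting into the derivative formula (Lemma~\ref{Length-Derivative}) yields $|\alpha|_x - V_\alpha \le (|\alpha|_y - |\alpha|_x)/(e^{d(x,y)}-1)$, from which the bound follows by integrating the ODE. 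Two further pieces you omit are also load-bearing in the paper: when $x_\phi \neq x$ there is no folding path from $x$ at all, so the paper introduces decorated difference-of-markings maps (Section~\ref{Sec:Decorated}) to manufacture one, and the local construction is glued into a global geodesic by a transfinite-induction argument over a well-ordered set of break times (Section~5), since the combinatorial type can change infinitely often. Your ``piecewise-smooth assembly'' comment glosses over both. In short: right skeleton, but the central technical content --- an actual proof that a balancing rate vector exists --- is missing, and the duality sketch you offer in its place is not sound.
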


The proof of this theorem is by construction. We then apply Theorem 1.1 to convexity of balls. There are two different notions of a round ball in $\CV$.  For $x \in \CV$ and 
$R>0$, we define the out-going ball of radius $R$ centered at $x$ to be
\[
\Bo (x, R) = \big\{ y \in \CV \st d(x,y) \leq R \big\}.
\]
As an immediate corollary of \thmref{Thm:Length} we have 

\begin{theorem}\label{Thm:Weakly-Convex} 
Given a point $x \in \CV$, a radius $R>0$ and points $y, z \in \Bo(x,R)$, 
\[
[y, z]_{\rm bf} \subset \Bo (x, R ).
\]
That is, the ball $ \Bo (x, R )$ is weakly convex. 
\end{theorem}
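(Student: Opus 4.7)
The plan is to deduce Theorem 1.2 directly from Theorem 1.1 by using the standard formula for the Lipschitz metric as a supremum of length ratios over loops. Specifically, a classical result (due to Francaviglia--Martino, building on Tad White) expresses the asymmetric Lipschitz distance as
\[
d(x, w) \;=\; \log \sup_{\alpha} \frac{|\alpha|_w}{|\alpha|_x},
\]
where $\alpha$ ranges over (conjugacy classes of) loops in the rose, or equivalently immersed loops in the underlying graph. This is the tool that converts the pointwise length bound of \thmref{Thm:Length} into a distance bound.

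First I would fix $y, z \in \Bo(x, R)$ and let $w = \gamma(t)$ be an arbitrary point on the balanced folding path $[y,z]_{\rm bf}$ provided by \thmref{Thm:Length}. Then, for any loop $\alpha$, \thmref{Thm:Length} yields $|\alpha|_w \leq \max(|\alpha|_y, |\alpha|_z)$. Dividing by $|\alpha|_x$ and using the trivial inequality $\max(a,b)/c \leq \max(a/c, b/c)$, I would conclude
\[
\frac{|\alpha|_w}{|\alpha|_x} \;\leq\; \max\!\left( \frac{|\alpha|_y}{|\alpha|_x}, \frac{|\alpha|_z}{|\alpha|_x} \right).
\]

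Next I would take the supremum over all loops $\alpha$ on both sides. Since the supremum of a maximum is bounded by the maximum of suprema, I obtain
\[
\sup_\alpha \frac{|\alpha|_w}{|\alpha|_x} \;\leq\; \max\!\left( \sup_\alpha \frac{|\alpha|_y}{|\alpha|_x}, \; \sup_\alpha \frac{|\alpha|_z}{|\alpha|_x} \right).
\]
Taking logarithms and applying the stretch formula gives $d(x, w) \leq \max\bigl(d(x,y), d(x,z)\bigr) \leq R$, which is exactly the statement that $w \in \Bo(x, R)$. Since $w$ was an arbitrary point on $[y,z]_{\rm bf}$, this proves weak convexity.

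There is no real obstacle here beyond citing the stretch formula; the entire content of Theorem 1.2 is packed into Theorem 1.1. The only subtlety worth flagging is the asymmetry of the metric: the argument crucially uses that the distance from $x$ is controlled by the $x$-length appearing in the denominator, so the same proof would not work for the in-going ball $\{y : d(y,x) \leq R\}$, which is consistent with the paper's emphasis that out-going and in-going balls behave differently.
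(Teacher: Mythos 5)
Your proof is correct and is precisely the argument the paper has in mind when it states \thmref{Thm:Weakly-Convex} as ``an immediate corollary'' of \thmref{Thm:Length}: divide the pointwise length bound by $|\alpha|_x$, take the supremum over loops, and invoke the stretch formula for the Lipschitz distance. Your closing remark about the asymmetry --- that the $x$-length sits in the denominator, which is why this works only for out-going balls --- correctly identifies the relevant feature and is consistent with the paper's later \thmref{Thm:In-Ball}.
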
 

We will also show that these theorems are sharp in various ways by providing
examples of how possible stronger statements fail. There are other ways to choose a 
geodesic connecting $y$ to $z$, for example, the \emph{standard geodesic} which
is a concatenation of a rescaling of the edges and a \emph{greedy folding path}
(see \secref{prelim} for definitions). In fact, when there is a greedy folding path 
connecting $y$ to $z$, Dowdall-Taylor \cite[Corollary 3.3]{hyperbolicextension} following 
Bestvina-Feighn \cite[Lemma 4.4]{FMladen}
have shown that the lengths of loops are quasi-convex. However, we will show
that these paths do not satisfy the conclusion of \thmref{Thm:Length} and
a standard path or a greedy folding path with end points in $\Bo(x,R)$
may leave the ball. Here is the summary of our counter-examples. 

\begin{theorem} \label{Thm:Negative} 
Theorems \ref{Thm:Length} and \ref{Thm:Weakly-Convex} are sharp. 
\begin{enumerate}
\item (Lengths cannot be made convex.) There are points $x,y \in \CV$ and a loop
$\alpha$ so that along any geodesic connecting $x$ to $y$, the length of 
$\alpha$ is not a convex function of distance in $\CV$. 

\item (The ball $\Bo$ is not quasi-convex)  This is true even when one restricts attention 
to (non-greedy)-folding paths. Namely, for any $R >0$, there are points 
$x,y,z \in \CV$ and there is a folding path $[y,z]_{\rm ng}$ connecting 
$y$ to $z$ so that 
\[
  y,z \in \Bo(x,2)
  \qquad\text{and}\qquad
  [y,z]_{\rm ng} \not \subset \Bo(x,R).
\] 
That is, a folding path with end points in $\Bo(x,2)$ can travel arbitrarily far away 
from $x$. 

\item (Standard geodesics could behave very badly) There exists a constant $c>0$ 
such that, for every $R>0$, there are points $x,y,z \in \CV$ and a standard geodesic 
$[y,z]_{\rm std}$ connecting $y$ to $z$ such that 
\[
  y,z \in \Bo(x,R)
  \qquad\text{and}\qquad
  [y,z]_{\rm std} \not \subset \Bo(x,2R-c).
\]   
That is, the standard geodesic path can travel nearly twice as far from $x$ as $y$ and $z$ 
are from $x$. 

\item (Greedy folding paths may not stay in the ball) 
For every $R>0$, there are points $x,y,z \in \CV$, $n\geq 6$, where $y$ and $z$ are 
connected by a greedy folding path $[y,z]_{\rm gf}$ such that 
\[
y,z \in \Bo(x, R) \qquad\text{but}\qquad [y,z]_{\rm gf} \not \subset \Bo(x, R). 
\]
\end{enumerate}
\end{theorem}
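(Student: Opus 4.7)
The plan is to exhibit four independent counterexamples, one for each assertion. A common mechanism available in all four parts is the following: because graphs in $\CV$ are normalized to covolume~$1$, forcing one edge to become very short inflates the lengths of loops that cross the complementary part of the graph, and hence inflates the Lipschitz distance to a balanced reference point $x$. The individual examples differ in \emph{which} folding or rescaling protocol forces this thinning.

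For part~(1), I would work in low rank, taking $x$ to be a balanced rose and $y$ the target of a single fold $\phi \from x \to y$. Along the folding geodesic the length of a carefully chosen loop $\alpha$ is, after covolume rescaling, a piecewise rational function of the folding parameter; a direct computation should show that this function is strictly concave on a subinterval, producing an interior time $t$ with $|\alpha|_t$ strictly larger than the chord joining $|\alpha|_x$ and $|\alpha|_y$. The main obstacle is that the conclusion must hold for \emph{every} geodesic between $x$ and $y$: the pair should be chosen so that either the geodesic is unique or every candidate geodesic exhibits this concavity, and uniqueness of folding geodesics in small rank, after the illegal turn is specified, is the most likely tool here.

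For part~(2), the strategy is to interrupt a non-greedy folding path at a moment when the chosen folds have forced a single edge to have length on the order of $\epsilon$. A loop crossing the complementary long edge then has length comparable to $1/\epsilon$ at the intermediate graph, and the Lipschitz distance from $x$ blows up as $\log(1/\epsilon)$. Choosing $y$ and $z$ close to $x$ but connected by such an $\epsilon$-thin intermediate, with $\epsilon$ scaled to $R$, gives the desired departure from $\Bo(x, R)$. For part~(3), I use the fact that a standard geodesic first rescales edge lengths within the open simplex containing $y$ before folding. Choosing $y$ and $z$ on the same simplex but with edge lengths essentially reciprocal to one another forces the rescaling phase to pass through a graph with one very short edge, far from the balanced point $x$. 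A round-trip estimate in the asymmetric Lipschitz metric then produces the $2R-c$ bound, the factor of $2$ arising because the rescaling must first shrink an edge to size comparable to $e^{-R}$ and then grow it back.

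For part~(4), working in rank $n \geq 6$ provides enough independent illegal turns that a greedy folding path resolves many of them simultaneously. I would arrange $y$ and $z$ in $\Bo(x, R)$ so that the simultaneous folding is unbalanced, forcing a single edge to shrink rapidly and pushing the intermediate graph outside $\Bo(x, R)$, while the balanced folding path provided by \thmref{Thm:Length} spreads the folds evenly and remains inside. The core computation is to compare the folding rates on each edge under the two protocols and to verify that the greedy path reaches a graph outside the ball. Across all four parts the main technical burden is essentially the same: produce explicit examples whose length function or geodesic passes through a quantifiably thin part of $\CV$.
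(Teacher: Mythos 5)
Your high-level plan — exhibit four explicit low-rank examples in which an intermediate point of the path is far from the center $x$ — matches the paper, and your discussion of part~(1) is on the right track: the paper does reduce to the case of a unique geodesic (via the rigidity criterion of Theorem~\ref{Thm:Unique}, applied to a rigid folding path in ${\rm CV}_2$), then uses the derivative formula of Lemma~\ref{Length-Derivative} to exhibit concavity.

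However, your ``common mechanism'' for parts (2)--(4) is backwards for the \emph{out-going} ball. You propose to take $x$ balanced and drive an edge of the \emph{intermediate} graph $w$ to be thin, arguing that this inflates lengths of loops crossing the complementary part and hence inflates $d(x,w)$. But with covolume normalized to $1$, making one edge of $w$ small does \emph{not} make the other edges large; all edge lengths stay bounded by $1$. If $x$ is balanced (edges $\approx 1/n$), then $d(x,w)=\sup_\alpha\log\bigl(|\alpha|_w/|\alpha|_x\bigr)$ stays bounded, because $|\alpha|_w$ can only be small compared to $|\alpha|_x$, not large. A thin edge at $w$ inflates the \emph{in-coming} distance $d(w,x)$, which is the wrong direction for $\Bo(x,R)$. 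In the paper's constructions the thin edge is placed at the \emph{center} $x$ (e.g. $|a|_x=\epsilon$ in parts (2) and (4), $|a|_x=|b|_x=\delta$ in part~(3)), and the intermediate graph $w$ is arranged so that a loop short in $x$ has moderate or large length at $w$; this is what blows up $d(x,w)$.

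Part~(3) has an additional problem. If $y$ and $z$ lie in the same open simplex with edge lengths ``reciprocal'' to one another, the standard geodesic is a monotone rescaling inside that simplex; there is no ``shrink then grow back'' phase, and the path does not leave the simplex, so it is not clear how to make it exit a large ball. The paper's mechanism is genuinely different: it uses a Fibonacci-type automorphism $\psi$ so that $y$ and $z$ have markings related by $\psi^m$. The standard geodesic from $y$ to $z$ first rescales within $\Delta_y$ (collapsing the $c$-edge) to reach $w$, where the loop $\beta=b$ has \emph{combinatorial} length $F_{m+3}$ and each edge of $w$ has \emph{metric} length $\geq F_{m+2}\delta$. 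This gives $d(x,w)\geq\log(F_{m+3}F_{m+2})\approx 2\log F_{m+3}-\log\varphi$, with $R=\log F_{m+3}$. The factor of $2$ is a product of two large combinatorial lengths, not a round-trip estimate. You would need to supply a mechanism of this type, coupling a change of marking to a rescaling phase, to get the $2R-c$ bound.
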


\subsection*{Construction of a balanced folding path} 
Given an optimal difference of markings map $\phi \from x \to y$ where the tension graph
is the whole $x$, there are many folding paths connecting $x$ to $y$. We need a controlled 
and flexible way to construct a folding path between $x$ to $y$. To this end, we introduce a 
notion of a speed assignment (see \secref{Sec:Speed}) which 
describes how fast every illegal turn in $x$ folds. Given a speed assignment, one can 
write a concrete formula for the derivative of the length of a loop $\alpha$ 
(\lemref{Length-Derivative}). 
To prove \thmref{Thm:Length}, we need to find a speed assignment so that, 
whenever $|\alpha|_y < |\alpha|_x$, the derivative of length of $\alpha$ is negative and if 
$|\alpha|_y > |\alpha|_x$ the length $\alpha$ does not grow too fast. 

A difference of markings map $\phi \from x \to y$ (again assuming the tension graph 
is the whole $x$) can be decomposed to a quotient map 
$\bphi \from x \to \by$ which is a local isometry and a scaling map $\by \to y$. 
Our approach is to determine the contribution $\ell_\tau$ of every sub-gate $\tau$ in $x$ 
to the length loss from $x$ to $\by$. For an easy example, consider 
$\F_3 =\langle a,b,c \rangle$,
let $x$ be a rose with 3 pedals where the edges are labeled $ac^2$, $bc$ and $c$
and the edge lengths are $\frac 12$, $\frac 13$ and $\frac 16$ and let $\by$ be
the rose with labels $a$, $b$ and $c$ and edges lengths all $\frac 16$. Then $\by$
is obtained from $x$ by wrapping $ac^2$ around $c$ twice and $bc$ around $c$ once. 
The length loss going from $x$ to $\by$ is 
\[
|x| - |\by| = 1-\frac 12= \frac 12. 
\]
Here, the sub-gate $\langle bc,c \rangle$ is contributing $\frac 16$ to the length loss and
the sub-gate $\langle ac^2,c \rangle$ is contributing $2 \times \frac 16$ to the length 
loss. Of course, in general, the definition of length loss contribution needs to be much 
more subtle. 

These length loss contributions are then used to determine the appropriate speed 
assignment. That is, we fold each sub-gates proportional to the length loss they eventually 
induce (see Section~\ref{weaklyconvex}). For instance, in the above example, the sub-gate 
$\langle ac^2,c \rangle$ should be folded with twice the speed of the sub-gate 
$\langle bc,c \rangle$. 

\subsection*{Decorated difference of markings map}
If the tension graph of $\phi \from x \to y$ is a proper subset of $x$, there is no
folding path between $x$ to $y$. As we see in part (2) of \thmref{Thm:Negative}, 
standard paths are not suitable for our purposes. Instead, we emulate a folding 
path even in this case. Namely, we introduce a notion of decorated difference of
markings map. That is, by adding decoration to $x$ and $y$ (marked points 
are added to $x$ and some \emph{hair} is added to $y$), we can ensure that the
difference of markings map is tight. Then, we show, a folding path can be defined
as before and the discussion above carries through (see Section~\ref{Sec:Decorated}). 

\subsection*{A criterion for uniqueness of geodesics}
To prove part one of \thmref{Thm:Negative} and \thmref{Thm:In-Ball} below, we need 
to know what all the geodesics 
connecting $x$ to $y$ are accounted for. In general this is hard to characterize. Instead, we focus on a case where there is a unique 
geodesic connecting $x$ to $y$. It is not hard to prove the uniqueness of geodesic
in special cases, however, we prove a general statement giving a criterion for uniqueness. A \emph{yo-yo} is an illegal
turn formed by a one-edge loop and a second edge, with no other edges incident to the vertex of this illegal turn.  
We say a folding path from $x$ to $y$ is \emph{rigid} if at every point along the path
there is exactly one illegal turn and it is not a yo-yo. 

\begin{theorem} \label{Thm:Unique} 
For points $x,y \in \CV$, the geodesic from $x$ and $y$ is unique if and only if there 
exists a rigid folding path connecting $x$ to $y$. 
\end{theorem}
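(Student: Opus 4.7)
The plan is to prove the two directions separately. For the \emph{if} direction (existence of a rigid folding path implies uniqueness of the geodesic), I would show that at every point along a rigid folding path $\gamma$, the infinitesimal direction of any geodesic $\gamma'$ from $x$ to $y$ is forced to agree with that of $\gamma$. Using the length-derivative machinery developed for \thmref{Thm:Length} (in particular \lemref{Length-Derivative}), the infinitesimal behavior at $x$ of any geodesic toward $y$ is governed by a speed assignment on the illegal turns at $x$; rigidity ensures that there is a single such turn and it is not a yo-yo, and the only admissible speed assignment is to fold that single turn at unit speed (up to reparameterization). Hence the initial segment of $\gamma'$ must coincide with that of $\gamma$. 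Since rigidity is inherited by every point $\gamma(t)$, an open-closed argument along the parameter interval forces $\gamma \equiv \gamma'$.

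For the \emph{only if} direction, I would argue by contrapositive: if no rigid folding path from $x$ to $y$ exists, then the geodesic is not unique. Starting from a balanced folding path $[x,y]_{\rm bf}$ provided by \thmref{Thm:Length}, possibly after passing to a decorated model as in Section~\ref{Sec:Decorated}, the failure of rigidity at some point $\gamma(t_0)$ falls into one of two cases: (a) there are at least two distinct illegal turns at $\gamma(t_0)$, or (b) the unique illegal turn at $\gamma(t_0)$ is a yo-yo. In case (a), altering the speed assignment at $t_0$ so as to fold the two turns at different relative rates, while preserving the total length-loss balance, produces a distinct folding path with the same endpoints, hence a second geodesic whose image differs from the first. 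In case (b), the yo-yo's defining property—that the vertex of the illegal turn carries only the loop and one additional edge—means that the additional edge can be wrapped into the loop along any time subinterval without interacting with folds happening elsewhere, so two distinct wrapping schedules produce two folding paths with identical endpoints but different images in $\CV$.

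The principal obstacle will be the \emph{if} direction, where I must rule out geodesics that are not folding paths at all, for example hybrid paths that combine rescaling with folding. The essential input is that an optimal difference of markings map (decorated if necessary) always determines the initial folding-path behavior of any geodesic issuing from $x$, so the analysis reduces to choosing a speed assignment; rigidity collapses this choice to a unique possibility. For the \emph{only if} direction, the subtlety is showing that the alternative paths produced in cases (a) and (b) have genuinely distinct images in $\CV$ rather than being reparameterizations of one another; the non-yo-yo clause in the definition of rigidity is tailored precisely so that this kind of reparameterization ambiguity can occur only at a yo-yo configuration, so ruling out yo-yos exactly eliminates this source of non-uniqueness.
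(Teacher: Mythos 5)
Your overall architecture (two directions, contrapositive for the forward implication, case split on multiple illegal turns versus a yo-yo) is the same as the paper's, and your treatment of the ``only if'' direction is essentially right: the paper likewise shows that a non-degenerate rescaling segment, a point with two illegal turns, or a yo-yo each gives rise to a second geodesic with the same endpoints. The substantive gap is in the ``if'' direction.

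You flag the principal obstacle correctly---ruling out geodesics that are not folding paths, including hybrid rescale-then-fold paths---but you then dismiss it by asserting, as an available ``essential input,'' that ``an optimal difference of markings map (decorated if necessary) always determines the initial folding-path behavior of any geodesic issuing from $x$.'' That is not a given; it is precisely what must be proven, and it is where the non-yo-yo hypothesis actually enters. The paper's argument hinges on a combinatorial claim that you do not have: in a rigid configuration (exactly one illegal turn, not a yo-yo, $n\geq 3$), every edge of $x$, and every legal two-edge segment, lies on an immersed $\phi_{\rm r}$-legal loop $\alpha$. The non-yo-yo hypothesis is what guarantees $x\setminus e$ still has all vertices of degree $\geq 2$, so that such loops exist; the rank hypothesis is used to build the dumbbell loop through the turn $\{e,\bar e\}$. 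With the claim in hand, one argues: if $z$ is on any geodesic from $x$ to $y$ and some edge $e$ is \emph{not} in the tension graph of an optimal map $x\to z$, then a legal loop through $e$ would not stretch maximally from $x$ to $z$, contradicting the fact that, being $\phi_{\rm r}$-legal, it stretches maximally from $x$ to $y$. This forces the tension graph to be all of $x$ (no rescaling segment), and a second application of the claim at the first point of deviation forces the speed-assignment data to agree as well. Without this claim, your appeal to \lemref{Length-Derivative} is circular: that lemma only describes length derivatives \emph{along a folding path} with a \emph{given} speed assignment, so it cannot by itself establish that an arbitrary competing geodesic is a folding path governed by some speed assignment. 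Also note that your proposed ``fold the single turn at unit speed'' step implicitly assumes the unique illegal turn is a sub-gate of size exactly $2$; if the gate were larger, there would be several sub-gates of size $2$ and hence nontrivial speed-assignment freedom, so ``exactly one illegal turn'' should be interpreted (as the paper does) as the gate having precisely two directions. Finally, in the ``only if'' direction, starting from a decorated balanced folding path is workable but delicate: pseudo-vertices introduce artificial illegal turns, so you would need to translate ``not rigid for the decorated structure'' back into one of your cases (a)/(b) for the original graphs, or simply observe, as the paper does, that a non-degenerate rescaling segment in the standard geodesic already furnishes non-uniqueness, which is cleaner.
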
 

\subsection*{The in-coming balls} 
In the last section we examine the convexity of in-coming balls. For $x \in \CV$ and 
$R>0$, we define the in-coming ball of radius $R$ centered at $x$ to be
\[
\Bi (x, R) = \big\{ y \in \CV \st d(y,x) \leq R \big\}.
\]
We show that a ball $\Bi(x,R)$, in general, is not even weakly quasi-convex. That is, 

\begin{theorem} \label{Thm:In-Ball} 
For any constant $R>0$, there are points $x,y,z \in \CV$ such that, 
$y,z \in \Bi(x,2)$ but, for any geodesic $[y,z]$ connecting $y$ to $z$, 
\[
[y,z] \not \subset \Bi(x, R). 
\]
\end{theorem}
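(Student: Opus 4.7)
The plan is to exploit the asymmetry of the Lipschitz metric: while $y$ and $z$ lie in $\Bi(x,2)$, they can be joined only by geodesics passing through points $w$ where some specific loop $\alpha$ becomes very short, forcing $d(w,x)$ to be large. Using the formula
\[
  d(w,x) = \log \sup_{\alpha} \frac{|\alpha|_x}{|\alpha|_w},
\]
it suffices to produce a single conjugacy class $\alpha$ with $|\alpha|_x$ bounded below and $|\alpha|_w \leq e^{-R}|\alpha|_x$ at some $w$ on every geodesic from $y$ to $z$.

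For the construction, I would fix $x$ to be the standard rose of rank $n$ (e.g.\ $n=2$) with petals of length $1/n$, choose a distinguished primitive conjugacy class $\alpha$ (for concreteness $\alpha = a_1$, so $|\alpha|_x = 1/n$), and pick a large parameter $N$ depending on $R$. The points $y$ and $z$ would be rose-type metric graphs whose markings differ by a Dehn-twist-like identification of total power $\Theta(N)$ along $\alpha$, with edge lengths tuned so that both $y$ and $z$ admit cheap collapse maps to $x$. Verifying $d(y,x), d(z,x) \leq 2$ then reduces to exhibiting those Lipschitz maps, or equivalently bounding the ratios $|\beta|_x/|\beta|_y$ and $|\beta|_x/|\beta|_z$ for a finite set of candidate witnesses $\beta$ via a Whitehead-type argument.

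The core of the proof is the geometric assertion that every geodesic from $y$ to $z$ visits a graph $w$ with $|\alpha|_w$ of order $1/N$. To this end I would use the framework of (decorated) difference-of-markings maps from Section~\ref{Sec:Decorated} to represent an arbitrary geodesic $[y,z]$ as a folding-type path. Since the markings of $y$ and $z$ differ by $\Theta(N)$ twist power along $\alpha$, any such folding must at some intermediate time fold a large portion of the $\alpha$-edge essentially onto itself, producing a representative of $\alpha$ in $w$ of length $O(1/N)$. Taking $N$ large enough gives $|\alpha|_x/|\alpha|_w > e^R$ and hence $d(w,x) > R$, as desired.

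The main obstacle is controlling \emph{every} geodesic rather than a preferred representative. Two strategies are available. The first is to engineer the example so that Theorem~\ref{Thm:Unique} applies: by verifying that a connecting folding path is rigid (exactly one non-yo-yo illegal turn at every time), one concludes that the geodesic from $y$ to $z$ is unique and reduces the analysis to a direct computation along this single path. The second is to give a combinatorial lower bound, valid along any folding or standard representative of a geodesic, for how short $\alpha$ must become given the placement of illegal turns at $y$ and $z$ and the total twist-discrepancy that must be realized. Either route, combined with the elementary fact that $|\alpha|_x = 1/n$ is fixed, yields the failure of weak quasi-convexity.
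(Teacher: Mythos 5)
Your high-level strategy matches the paper's: exhibit a rigid folding path from $y$ to $z$ so that \thmref{Thm:Unique} forces \emph{every} geodesic through the intermediate point $w$, and then detect depth via a loop $\alpha$ whose length is bounded below in $x$ but tiny at $w$. That is exactly the paper's mechanism.

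The concrete gap is in fixing $x$ as the balanced rose with all petals of length $1/n$. With such an $x$, the condition $y \in \Bi(x,2)$ already bounds the twist power and makes $N$ useless. Indeed, if $y$ is a rose with an edge labeled by a cyclically reduced word $\beta$ of combinatorial length about $N$ (say $a b^N$), then $|\beta|_x = (N+1)/n$, while $|\beta|_y \le 1$; since $d(y,x) = \sup_\gamma \log\big(|\gamma|_x/|\gamma|_y\big) \ge \log\big((N+1)/(n|\beta|_y)\big)$, the inequality $d(y,x)\le 2$ forces $N+1 \le n e^2$. The paper escapes this by letting $x$ itself degenerate with the parameter $m$: the twisting loop $b$ has length $1/m$ in $x$, so $b^m$ has length $1$ in $x$ uniformly in $m$, while the detector loop $a$ has length about $1/2$ in $x$ and about $1/m$ at $w$. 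You would also need to check rigidity concretely (no yo-yos along the path; the paper works in rank $3$ rather than $2$, cf.\ \remref{Rem:Rank-2}), and the second ``bound every geodesic combinatorially'' route you propose is not used and would be substantially harder than invoking uniqueness.
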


Once again we use \thmref{Thm:Unique}; we construct an example where there is 
a unique geodesic between $y$ and $z$ and show that it can go arbitrarily far out.

\subsection*{Analogies with \Teich space} The problem addressed in this paper has 
a long history in the setting of \Teich space. Let $(\calT(\Sigma), d_\calT)$ be the \Teich 
space of a surface $\Sigma$ equipped with the \Teich metric. It was claim by Kravetz 
\cite{kravetz:GT} that round balls in \Teich space are convex and he used it to 
give a positive answer to the Nielsen-realization problem. However, his
proof turned out to be incorrect. Even-though the Nielsen-realization problem
was solved by Kerckhoff \cite{kerckhoff:NR}, it was open for many years whether or not
the rounds balls are convex and was only resolved recently; It was shown in \cite{qc} 
that round balls in $(\calT(\Sigma), d_\calT)$ are quasi-convex and it was shown in 
\cite{rafi:NC} that there are non-convex balls in the \Teich space. The problem is still open 
for the \Teich space equipped with the Thurston metric \cite{thurston:MSM},
which is an asymmetric metric and is more directly analogous to the Lipschitz metric
in $\CV$. Hence, the weak convexity in the case of Outer space is somewhat surprising. 

The results of this paper can be used to give a positive answer the Nielsen-realization 
problem for $\CV$. However, this is already known, see \cite{khramtsov:FG,culler:FG}
and \cite{white:FF}.

\subsection*{Acknowledgements}
We would like to thank Yael Algom-Kfir and Mladen Bestvina for helpful comments 
on an earlier version of this paper.

\section{preliminaries}\label{prelim}
\subsection{Outer space}

Let $\F_n$ be a free group of rank $n$ and let $\Out$ be the outer automorphism group of
$\F_n$. Let $\cv$ be the space of free, minimal actions of $\F_n$ by isometries on metric
simplicial trees \cite{outerspace}. Two such actions are considered isomorphic if there is an equivariant 
isometry between the corresponding trees. Equivalently we think of a point in $\cv$ as the quotient metric 
graph of the tree by the corresponding action. The quotient graph is marked, that is, 
its fundamental group is identified (up to conjugation) with $\F_n$. 

The Culler-Vogtmann Outer space, $\CV$, (or simply the Outer space) is the subspace of 
$\cv$ consisting of all \textit{marked metric graphs} of total length $1$.  Let $x$ be a simple, finite graph of total length 1, in which every vertex has degree at least 3. Let $R_n$ be the graph of $n$ edges that are all incident to one vertex. A \emph{marking} is a homotopy equivalence 
$f \from R_n \to x$.   Two marked graphs $f \from R_n \to x$ and $f' \from R_n \to x'$ are \emph{equivalent} if there is an isometry $\phi \from x \to x' $ such that $\phi \circ f \simeq f'$(homotopic).  When the context is clear, we often drop 
the marking out of the notation and simply write $x \in \CV$. In this paper, we refer to metric graphs 
as $x, y$, etc. and the corresponding trees as $T_x, T_y$, etc. We also use 
$\widetilde{\phi} \from T_x \to T_y$ for the  lift of $\phi$.

The set of marked metric graphs that are 
isomorphic as marked graphs to a given point  $x \in \CV$ makes up an open simplex 
in $\CV$ which we denote $\Delta_x$. Outer space $\CV$ consists of simplices with 
missing faces. The group $\Out$ acts on $\CV$ by precomposing the marking: for an element
$g \in \Out$, $(x, f)g = (x, f\circ g) $. This is a simplicial action. 

\subsection{Lipschitz metric}
A map $\phi \from x \to y$ is a \emph{difference of markings} map if $\phi \circ f_x \simeq f_y$. We will only consider Lipschitz maps and we denote by $L_{\phi}$ the Lipschitz constant of $\phi$.  The Lipschitz metric on $\CV$ 
is defined to be:
$$
d(x, y):=\inf_\phi \log L_\phi
$$
where the infimum is taken over all differences of markings maps. There exists a non-unique 
difference of markings map that realizes the infimum\cite{FMmetric}.
Since a difference of markings map is homotopic rel vertices to a map that is linear on edges, we also use $\phi$ to 
denote the representative that is linear on edges and refer to such a map as an \emph{optimal map} from
$x$ to $y$. For the remainder of the paper, we always assume $\phi$ is an optimal difference of markings map. 

By a \emph{loop} or an \emph{immersed loop} in $x$, we mean a free homotopy class of a map from the circle
into $x$, or equivalently, a conjugacy class in $\F_n$. Meanwhile we use \emph{a simple loop}
to mean a union of edges in a graph that forms a circle with no repeated vertices.
Both kinds of loops can be identified with an element of the free group, call it $\alpha$.
We use $|\alpha|_x$ to denote the metric length 
of the shortest representative of $\alpha$ in $x$, where $x$ can be a point in $\CV$ or $\cv$, depending on the context. It is shown \cite{FMmetric} that if $x, y \in \CV$ then the distance 
$d(x, y)$ can be computed as: 
\begin{equation}\label{sup}
d(x, y) = \sup_{\alpha} \log \frac{|\alpha|_y}{|\alpha|_x},
\end{equation}
where the sup is over all loops in $x$. In fact, it is shown in \cite{FMmetric} that:
\begin{theorem}\label{candidate}
Given two points in Outer space $x$ and $y$, the immersed loop that represents $\alpha$ which realizes the supremum
can be taken from a finite set of subgraphs
of $x$ of the following forms 
\begin{itemize}
 \item simple loops
 \item figure-eight: an immersed loop where there is exactly one vertex with two pre-images in the circle
 \item dumbbell: an immersed, geodesic loop in the graph with a repeated edge
\end{itemize}
\end{theorem}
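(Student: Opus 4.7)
The plan is to use a cut-and-average reduction: show that any immersed loop $\alpha$ can be replaced by a loop whose ratio $|\alpha|_y/|\alpha|_x$ is at least as large and whose support subgraph $G_\alpha \subset x$ (the union of edges $\alpha$ traverses) has rank at most $2$. Rank-$1$ supports are simple loops, and rank-$2$ connected subgraphs with minimum valence $\geq 2$ are either a figure-eight, a dumbbell, or a theta graph. An additional argument handles the theta case, leaving exactly the three listed types. Finiteness then follows because $x$ has only finitely many subgraphs of each shape, and each supports essentially one primitive immersed loop; in particular the supremum in \eqref{sup} is attained.

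The main tool is a decomposition lemma. Suppose $G_\alpha$ has rank $\geq 3$, or is a theta graph with all three edges used. Then $\alpha$ visits some vertex $v \in G_\alpha$ at least twice, and a cut at $v$ can be chosen so that $\alpha$ splits as a concatenation $\alpha = \alpha_1 \cdot \alpha_2$ of two non-trivial loops based at $v$, both cyclically reduced. In this situation
\[
|\alpha|_x = |\alpha_1|_x + |\alpha_2|_x,
\qquad
|\alpha|_y \leq |\alpha_1|_y + |\alpha_2|_y,
\]
the first identity because no new cancellations are introduced and the second being the triangle inequality for the geodesic representative in $y$. The mediant inequality
\[
\frac{|\alpha|_y}{|\alpha|_x}
\;\leq\;
\max\!\left(\frac{|\alpha_1|_y}{|\alpha_1|_x},\ \frac{|\alpha_2|_y}{|\alpha_2|_x}\right)
\]
then promotes one of $\alpha_1,\alpha_2$ to a sub-loop of strictly smaller support that still realizes the supremum. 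Since the edge-count of the support is a positive integer that strictly decreases, the iteration terminates in a loop whose support is one of the three listed types.

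The main obstacle is guaranteeing the existence of a valid cut, and the failure mode of this lemma is precisely what forces dumbbells into the list. Indeed, if $G_\alpha$ is a dumbbell with bar edge $e$ traversed twice, then every cut at a vertex of $G_\alpha$ isolates an arc of the form $e\,\beta\,\bar{e}$, which is freely homotopic to $\beta$ and hence not cyclically reduced; the sum $|\alpha_1|_y + |\alpha_2|_y$ then falls strictly short of $|\alpha|_y$ and the mediant estimate breaks down. For higher-rank supports or rank-$2$ theta subgraphs, one instead verifies by a short graph-theoretic matching argument at the repeated vertex $v$ that the pairing of entering and leaving edge-transitions of $\alpha$ can be chosen so that neither piece creates such a ``conjugating'' pattern; the cut-and-average then proceeds. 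Once the reduction bottoms out, the three terminal configurations are exactly the simple loop, figure-eight, and dumbbell listed in the statement.
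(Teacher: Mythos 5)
The paper does not prove this statement itself; it is quoted from Francaviglia--Martino \cite{FMmetric}, whose argument works with the optimal map $\phi \from x \to y$ and the train-track structure on the tension subgraph $x_\phi$: a $\phi$-legal loop there is stretched by exactly $L_\phi$ (its $\phi$-image is already reduced), and a shortest legal loop in a graph with at least two gates at each vertex has one of the three listed shapes. Your cut-and-average route is genuinely different, but its central inequality, $|\alpha|_y \leq |\alpha_1|_y + |\alpha_2|_y$, is not ``the triangle inequality.'' The quantity $|\cdot|_y$ is a translation length on $T_y$, and translation length is not subadditive under group multiplication: when the axes of $\alpha_1$ and $\alpha_2$ in $T_y$ are disjoint, $\ell_y(\alpha_1\alpha_2) = \ell_y(\alpha_1) + \ell_y(\alpha_2) + 2\,d(A_1,A_2)$. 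This fails in precisely the situations you need. Take $x$ a rank-$3$ rose and $\alpha = abc$, which is none of the three candidate shapes; the cut at the vertex gives $\alpha_1 = ab$ and $\alpha_2 = c$, both cyclically reduced in $x$, yet if $y$ is a graph in which the geodesic representatives of $ab$ and $c$ lie in disjoint subgraphs joined by an arc, then $|abc|_y > |ab|_y + |c|_y$, and likewise for the other two cuts $(bc,a)$ and $(ca,b)$ with other choices of $y$. The correct statement, $|\alpha|_y \leq |\alpha_1|_y^w + |\alpha_2|_y^w$ for lengths of loops based at $w = \phi(v)$, does not rescue the mediant step, because $|\alpha_i|_y^w$ can be strictly larger than $|\alpha_i|_y$.

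Independently, the ``short graph-theoretic matching argument'' you invoke to produce a valid cut on theta supports does not exist. Consider the commutator loop $\alpha = e_1\bar e_2\, e_3\bar e_1\, e_2\bar e_3$ carried by a theta graph with vertices $u,v$ and edges $e_1,e_2,e_3$. There are six ways to cut $\alpha$ at a single vertex (three at $u$, three at $v$), and every one of them leaves a piece with a cyclic backtrack at the cut point. For instance, cutting at $u$ to obtain $\alpha_1 = e_1\bar e_2$ leaves $\alpha_2 = e_3\bar e_1 e_2\bar e_3$, whose cyclic turn at $u$ is $\{e_3, e_3\}$; the same check rules out the other five. So the reduction step cannot even begin on this loop, and some mechanism other than cut-and-average --- in \cite{FMmetric}, legality with respect to the optimal map --- is needed to discard it.
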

This result implies we can compute distances between two points by calculating the ratio $\frac{|\alpha|_y}{|\alpha|_x}$ for a finite
set of immersed subgraphs.

\subsection{Train track structure}
It is often convenient to use a difference of markings map that has some
additional structure. We define 
\[
\lambda(e) =  \frac{|e|_y}{|e|_x}
\]
to be the stretch factor of an edge $e$ and 
\[
\lambda(\alpha) =  \frac{|\alpha|_y}{|\alpha|_x}
\] 
to be the the stretch factor of a shortest immersed loop that represents $\alpha$.  For an optimal map $\phi \from x \to y$, define the 
\textit{tension subgraph}, $x_{\phi}$, to be the 
subgraph of $x$ consisting of maximally stretched edges. We define a 
\textit{segment} $[v, w]$ between points $v, w \in x$ to be a 
locally isometric immersion $[0, l] \rightarrow x$ of an interval
$[0, l] \subset \R$ sending $0  \rightarrow v$ and $l  \rightarrow w$. 
A \textit{direction} at a vertex $v \in x$ is a germ of non-degenerate segments $[v,w]$ with 
$v \neq w$.  $D(v)$ is the set of all directions at $v$. If a map $\phi \from x \rightarrow y$ is linear on edges with slope not 
equal to zero for all edges $e \in x$, then $\phi$ induces a derivative map $D\phi$ which 
sends a direction $d$ at $v \in x $ to a direction at $D\phi(v)$ at $\phi(v) \in y$.  
The set of \textit{gates} with respect to $\phi$ at a vertex $v \in x$ is the set of equivalence 
classes at $v$ where $d \sim d'$ if and only if $(D\phi)^k (d) = (D\phi)^k(d')$ for some 
$k \geq 1$. The size of a gate $\tau$ is denoted $|\tau|$ and is defined to be the number of directions in the equivalence class. An unordered pair $\{d,d' \}$ of distinct directions at a vertex $v$ of $x$ is called a 
\textit{turn}. The turn $\{d, d'\}$ is \textit{$\phi$--illegal} if $d$ and $d'$ belong to a same 
gate and is \textit{legal} otherwise. The set of gates at $x$ is also called the 
\textit{illegal turn structure} on $x$ induced by $\phi$. \\

\begin{definition}
A \emph{sub-gate} is a subset of directions in a gate (including the gate itself). The set 
of all sub-gates of $x \in \CV$ under the difference of markings map $\phi$ is denoted 
$\calT_{\phi}$, or simply $\calT$ if the associated map is clear from the context. 
With this terminology we can view an illegal turn as a sub-gate of size 2. 
A \emph{speed assignment} is 
an assignment of non-negative real numbers $s_{\tau}$ to all elements of $\calT_{\phi}$ of size two and denote the assignment 
\[
\calS = \Big \{ s_{\tau} \  \Big | \ \tau \in \calT, |\tau| = 2 \Big \}.
\]
Moreover, for an immersed loop $\alpha_x \in x$,
the multi-set of illegal turns of $\alpha$ is denoted $\calT_{\alpha} (\phi)$, or $\calT_{\alpha}$ when the associated map is clear from the context. $\calT_{\alpha}$ is a priori a multi-set because an illegal turn $\tau$ can appear in $\calT_{\alpha}$ more than once.
\end{definition} 

  An illegal turn structure is moreover a \textit{train track} structure if there are at least
  two gates at each vertex. This is equivalent to requiring that $\phi$ is locally injective on 
  (the interior of) each edge of $x$ and that every vertex has at least two gates. For any two points $x, y \in \CV$, there exists an optimal 
  map $\phi : x \rightarrow y$ such that $x_{\phi}$ has a train track structure\cite{FMmetric}, which then allow us 
to use immersed loops such as the candidate loops in Theorem~\ref{candidate} to compute Lipschitz distances between points.

\subsection{Folding paths} \label{Sec:Speed}
In this section we construct a family of paths called folding paths. The general definition 
can be found in \cite{FMladen}, but we introduce it here in the language that is adapted to 
this paper. Given an optimal difference of markings map $\phi \from x \to y$ with 
$x_\phi=x$, and a speed assignment $\calS = \{s_{\tau}\}_{|\tau| = 2}$, we
define a folding path $\{x_t\}$, for small $t\geq0$. The difference of markings map 
$\phi_t \from x \to x_t$ is a composition of a quotient map $\bar \phi_t \from x \to \bar x_t$
and a scaling map $\bx_t \to x_t$. For $t$ small enough, the quotient graph $\bx_t$ of $x$ 
is obtained from $x$ as follows. 
For every gate $\tau$ with $|\tau|=2$ and two points $u, w$ on the two edges $e_u, e_w$ 
of the gate $\tau$, we identify $u$ and $w$ if $|v,u|_x = |v,w|_x \leq t s_{\tau}$ 
(here $|\cdot, \cdot|$ measures the length of the segment in the graph $x$).  
The graph $\bx_t$ inherits a natural metric so that this quotient 
map is a local isometry on each edge of $x$. Since $\bx_t \in \cv$, let $x_t$ be the projective class of $\bx_t$ in $\CV$, and let $\phi_t \from x \to x_t$ be the composition $\bar \phi_t$
and the appropriate scaling. 

Notice that in $x_t$ it is possible for edges in a sub-gate $\tau$ to be identified
along a segment that is longer than $ t s_{\tau}$, depending on the identification
of other edges in the gate containing $\tau$. However we still say $x_t$ constructed 
this way is the folding path associated with $\calS = \{s_{\tau}\}_{|\tau| = 2}$.
That is, different speed assignments may result in the same folding path. 

We assume $t$ is small enough so that the combinatorial type of $x_t$ does not change on the interval $(0, t_1)$. 
Also, for $t$ small enough, any $u$ and $w$ as above are also identified under 
$\phi$ because $e_u$ and $e_w$ are in the same gate. Hence,
$ \phi \circ \phi_t^{-1}$ is a well defined map. We always assume $t$
small enough that is true and define the \emph{left-over map} $\psi_t$ at time $t$
to be defined by
\[
\psi_t \from x_t \to y, \qquad \psi_t = \phi \circ \phi_t^{-1}
\]
Note that the maps $\phi$ and $\phi_t$ have constant derivatives, therefore 
$\psi_t$ also has constant derivative and in fact the derivative is the ratio of
the derivative of $\phi$ over that of $\phi_t$. That is, 
\[
d(x_t, y) = d(x,y) -d(x,x_t). 
\]
We call such a path $\{x_t\}$ a \emph{geodesic starting from $x$ towards $y$} since
it does not necessarily reach $y$. To summarize, we have shown:

\begin{proposition} \label{Prop:Fold} 
For any difference of markings map $\phi \from x \to y$ with $x_\phi=x$ and any speed assignment $\calS=\{ s_\tau\}_{|\tau|=2}$, there is $t_1>0$ and a geodesic 
$\gamma_\calS \from [0,t_1] \to \CV$ starting from $x$ towards $y$ where the graph
$x_t = \gamma(t)$ is obtained by folding every gate $\tau$ at speed $s_{\tau}$.
\end{proposition}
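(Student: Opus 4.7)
The plan is to check that the construction sketched just before the statement yields, for sufficiently small $t_1$, a well-defined path in $\CV$ along which distances add up to $d(x,y)$. There are three things to verify: (i) the quotient $\bar x_t$ and its normalization $x_t \in \CV$ are well-defined and of constant combinatorial type on $(0,t_1)$; (ii) the optimal map $\phi$ descends through $\phi_t$, so that the left-over map $\psi_t = \phi \circ \phi_t^{-1}$ is well-defined; and (iii) the Lipschitz constants of $\phi_t$ and $\psi_t$ multiply to $L_\phi$, which forces $d(x,x_t) + d(x_t,y) = d(x,y)$.

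For (i), I would choose $t_1 > 0$ small enough that $t_1 s_\tau$ is strictly less than half the $x$-length of every edge incident to the vertex of any illegal turn $\tau$. For $t\in(0,t_1)$ the prescribed identifications take place in pairwise-disjoint neighborhoods of the vertices of $x$, no two distinct vertices get merged, and the combinatorial type of the quotient is constant. Hence $\bar x_t \in \cv$ is well-defined, $\bar\phi_t\from x\to\bar x_t$ is a local isometry on each edge of $x$, and rescaling $\bar x_t$ to total length $1$ produces a point $x_t \in \CV$ with the induced marking.

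For (ii), suppose $\bar\phi_t$ identifies two points $u,w\in x$. Chasing through the equivalence relation generated by the prescribed identifications in the various size-$2$ sub-gates, $u$ and $w$ lie at equal distance $r$ from a common vertex $v$ on two edges belonging to a common $\phi$-gate at $v$. The gate condition means these edges have the same image direction under $D\phi$; since $\phi$ is linear with constant slope $L_\phi$ on every edge of $x$ (using $x_\phi = x$), $\phi$ maps the two initial segments identically onto the same segment in $y$, until we would first encounter a vertex of $y$. Choosing $t_1$ small enough to rule this out yields $\phi(u)=\phi(w)$, so $\phi$ descends to a well-defined map $\psi_t\from x_t \to y$ that is linear on edges with constant derivative $L_\phi/L_{\phi_t}$.

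For (iii), each of $\phi$, $\phi_t$, $\psi_t$ has constant edgewise derivative equal to its Lipschitz constant, and $\phi = \psi_t \circ \phi_t$ on edges gives $L_\phi = L_{\phi_t} \cdot L_{\psi_t}$. Combining $d(x,x_t)\leq\log L_{\phi_t}$, $d(x_t,y)\leq \log L_{\psi_t}$ with the triangle inequality $d(x,x_t)+d(x_t,y)\geq d(x,y)=\log L_\phi$ forces equality throughout, so $\gamma_\calS$ is a geodesic starting from $x$ toward $y$. The main subtlety is step (ii): several sub-gates with different speeds may interact transitively, and one must ensure no identification made by $\bar\phi_t$ escapes the identifications already performed by $\phi$; the constancy of the stretch factor across the tension graph $x_\phi=x$ together with the smallness of $t_1$ is precisely what rules this out.
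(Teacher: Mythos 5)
Your proposal is correct and follows essentially the same route as the paper's justification (which is given in the discussion preceding the proposition): construct $\bar x_t$ by local identifications, check $\phi$ descends because points identified by $\bar\phi_t$ lie in a common gate and hence are already identified by $\phi$, and conclude via the multiplicativity of the constant-slope Lipschitz factors that $d(x,x_t)+d(x_t,y)=d(x,y)$. You spell out the three verifications more explicitly than the paper does, including the transitivity point, but the underlying argument is identical.
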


Note that, when we say the combinatorial type of $x_t$ does not change, it does
not mean it is the same as $x$ or even $x_{t_1}$. Typically, the geodesic segment
$\gamma_\calG$ starts from $x$ which lies on the boundary of simplex in 
$\CV$ (not necessarily of maximal dimension) travels in the interior of this simplex
and stops when it hits the boundary of the simplex. At this point, if there is 
a new speed assignment, the folding could continue. 

In general, a folding path from $x$ to $y$ is denoted $[x, y]_{\rm f}$. If all elements of the speed assignment are equal then the path is called a \emph{greedy 
folding path} and denoted $[x, y]_{\rm gf}$. In Section 3 we construct a specific type of folding 
path whose speed assignment reflects the contribution of each sub-gate to the total length 
loss along the path. However, one has to be careful to extend the local construction described 
here to a geodesic connecting $x$ to $y$. In Section 5 we extend the local construction to a global construction.

\subsection{Standard geodesic}
Another important class of geodesic path to consider is the standard geodesic path. For two points $x, y \in \CV$, there may not exist a folding path connecting them. There is, however, a non-unique \textit{standard geodesic}, denoted $[x, y]_{\rm std}$, from $x$ to $y$ 
\cite{FMladen}. In \cite[Proposition 2.5]{FMladen}, Bestvina and Feighn give a detailed construction 
of such a standard geodesic, which we summarize briefly here. First, take an optimal map 
$\phi \from x \to y$ and consider the tension subgraph $x_{\phi}$.
Let $\Delta_x \subset \CV$ denote the smallest simplex containing $x$. By shortening some of 
the edges outside of  $x_{\phi}$ (and rescaling to maintain total length 1), one may 
then find a point $x' \in \Delta_x$ in the closed simplex $\Delta_x$ together with an optimal difference 
of markings $\phi' \from x' \to y$ whose tension graph  $x_{\phi}$ is all of 
$x'$ and such that
\[
d(x,y)=d(x,x')+d(x',y)
\]
If $\gamma_1$ denotes the linear path in $\Delta_x$ from $x$ to $x'$
(which when parameterized by arc length is a directed geodesic) and $\gamma_2 = \gamma^{\phi'}$ denotes 
the folding path from $x'$ to $y$ induced by $\phi'$, it follows from the equation above that 
the concatenation $\gamma_1 \gamma_2$ is a directed geodesic from $x$ to $y$ which is called a standard geodesic from $x$ to $y$,
which we denote $[x, y]_{\rm std}$.

\subsection{Unique geodesics}\label{unique}
We would like to show that, in certain situations, all geodesics connecting
a pair of points have some property.  Here, we give a criterion for when 
the geodesic between two points is unique. For a difference of markings map 
$\phi \from x \to y$, we define a \emph{yo-yo} to be 
an illegal turn $\langle e, \be \rangle$, at a vertex $v$ satisfying the following
(see \figref{Fig:YoYo}): 
\begin{itemize}
\item The edge $\be$ forms a loop at $v$.
\item There are no other edges attached to $v$.
\end{itemize}

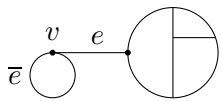
\begin{figure}[h!]
\begin{tikzpicture}
 \tikzstyle{vertex} =[circle,draw,fill=black,thick, inner sep=0pt,minimum size=.5 mm]
 
[thick, 
    scale=1,
    vertex/.style={circle,draw,fill=black,thick,
                   inner sep=0pt,minimum size= .5 mm},
                  
      trans/.style={thick,->, shorten >=6pt,shorten <=6pt,>=stealth},
   ]
   
    \node[vertex] (a) at (0,1.2) [label=$v$]  {};
    \node[vertex] (b) at (1, 1.2) {};
    \draw (0,0.9) circle (0.3cm);
    \draw (1.6,1.2) circle (0.6cm);
    \draw (0,1.2)--(1,1.2);
    \draw (1.6,.6) -- (1.6,1.8); 
    \draw (1.6,1.4) -- (2.16,1.4);     
    \node at (0.6, 1.4) {$e$};
    \node at (-.5, 0.9) {$\be$};    
\end{tikzpicture}
\caption{A yo-yo illegal turn}
\label{Fig:YoYo}
\end{figure}

We say a folding path 
$\gamma_{\rm r} \from [a,b] \to \CV$ is \emph{rigid} if, for every $t \in [a,b]$, there
is a difference of markings map $\psi_t \from \gamma_{\rm r}(t) \to y$, where the associated
train-track structure has exactly one illegal turn, and that illegal turn is not a yo-yo. 
We show that unique geodesics are exactly rigid folding paths. 

\begin{theorem}
For points $x,y \in \CV$, where $n \geq 3$, the geodesic from $x$ to $y$ is unique 
if and only if there exists a rigid folding path $\gamma_{\rm r}$ connecting $x$ to $y$.
\end{theorem}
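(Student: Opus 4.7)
The plan is to prove the two implications separately.

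For the ``if'' direction, I suppose a rigid folding path $\gamma_{\rm r} \from [0,b] \to \CV$ from $x$ to $y$ exists and show every geodesic from $x$ to $y$ coincides with it. Given an arbitrary geodesic $\gamma$ from $x$ to $y$, I consider the set $I = \{ t \in [0,b] : \gamma(t) = \gamma_{\rm r}(t) \}$, which is closed in $[0,b]$ and contains $0$. The key local claim is that if $\gamma(t_0) = \gamma_{\rm r}(t_0)$, then the two paths also agree in a neighborhood of $t_0$. This uses the fact that the left-over map $\psi_{t_0} \from \gamma_{\rm r}(t_0) \to y$ has tension subgraph equal to the whole graph and exactly one illegal turn, which is not a yo-yo; hence the only direction in which a geodesic from $\gamma_{\rm r}(t_0)$ toward $y$ can proceed is by folding this single illegal turn. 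The non-yo-yo condition is used to rule out the $\Z/2$ ambiguity of folding an edge into either end of a one-edge loop with no other incident edges. Openness of $I$ then forces $I = [0,b]$, so $\gamma = \gamma_{\rm r}$.

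For the ``only if'' direction, I argue by contrapositive: assuming no rigid folding path from $x$ to $y$ exists, I exhibit two distinct geodesics. There are three cases to handle. First, if no folding path from $x$ to $y$ exists at all, then every optimal map $\phi \from x \to y$ has proper tension subgraph $x_\phi \subsetneq x$, and I shrink the non-tension edges inside $\Delta_x$ at independently chosen rates to produce distinct standard geodesics. Second, if some folding path from $x$ to $y$ passes through a point $x_{t_0}$ with two distinct illegal turns $\tau_1, \tau_2$, I use speed assignments concentrating on $\tau_1$ versus on $\tau_2$ near $t_0$ and invoke \propref{Prop:Fold} to produce two folding paths that diverge near $t_0$ and then reconverge to $y$. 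Third, if some folding path carries a yo-yo illegal turn $\langle e, \be \rangle$ at a vertex $v$ on some $x_{t_0}$, I exploit the $\Z/2$ symmetry of the one-edge loop $\be$ (available because no other edges are incident to $v$) to fold $e$ into either end of $\be$; since $n \geq 3$, some loop outside the yo-yo distinguishes these two choices, producing distinct points of $\CV$ and hence distinct folding paths.

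I expect the main obstacle to be the third case: verifying that the two candidate foldings produce genuinely distinct points in $\CV$ requires using the marking (not merely the underlying graph) to distinguish them, and showing that both fold choices extend to honest geodesics reaching $y$. Here the hypothesis $n \geq 3$ is essential, and the argument will likely be carried out by producing an explicit loop whose length evolves differently under the two folds. A secondary difficulty lies in making the openness step in the ``if'' direction precise across times when the combinatorial type of $\gamma_{\rm r}(t)$ changes, which requires careful handling of one-sided derivatives of length functions at those finitely many moments.
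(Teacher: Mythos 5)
Your ``if'' direction has the right shape (openness of the agreement set) but skips the step that carries the actual mathematical content. At $z = \gamma_{\rm r}(t_0)$ you note that $\psi_{t_0}$ has one non-yo-yo illegal turn and say ``hence'' a geodesic toward $y$ must fold it. That ``hence'' is exactly what needs a proof. A priori, a point $z'$ on a competing geodesic could be reached via a difference-of-markings map $z \to z'$ with proper tension subgraph (so a rescaling segment moves), or via a folding path that folds some turn which is $\psi_{t_0}$-legal. The mechanism that forecloses these options is a combinatorial lemma you have not identified: every edge, and every $\psi_{t_0}$-legal two-edge segment, of $z$ lies on an immersed $\psi_{t_0}$-legal loop $\alpha$. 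This is precisely where the single-illegal-turn, non-yo-yo, and $n\geq 3$ hypotheses are consumed (deleting either edge of the illegal turn leaves a piece where all turns are legal; $n\geq 3$ supplies a loop in $z \setminus \{e, \be\}$ for the dumbbell subcase). Once the lemma is available, any such $\alpha$ through a non-tension edge, or through a legal-but-folded turn, is stretched submaximally from $z$ to $z'$, giving $|\alpha|_z\,e^{d(z,y)} < |\alpha|_y$, which contradicts that $\alpha$ is legal along $\gamma_{\rm r}$ all the way to $y$. Your closing remark that the residual difficulty is ``handling one-sided derivatives of length functions'' misdiagnoses what is missing: no derivatives enter, only this claim plus multiplicativity of stretch factors across concatenated geodesics.

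Your ``only if'' direction is closer, but case 1 has a gap: if exactly one edge lies outside $x_\phi$, there is no freedom in the rescaling rate, the segment in $\Delta_x$ from $x$ to $x'$ is unique, and you get only one standard geodesic. That subcase needs a separate device: replace the rescaling segment by a zigzag fold of the lone non-tension edge onto a neighboring edge, producing a second geodesic with the same endpoints. In case 3, ``fold $e$ into either end of $\be$'' is not a fold of an illegal turn, since only one end of $\be$ shares a gate with $e$; the alternate geodesic instead folds a turn involving $e$ at its other vertex first and then folds a newly created turn, passing through a different marked graph before reconverging. Your instinct that the marking and $n\geq 3$ are needed to distinguish the two paths is sound, and case 2 (two illegal turns, distinct speed assignments) is fine as stated.
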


\begin{proof}
Let $\gamma_{\rm r} \from [0,a] \to \CV$ be a rigid folding path connecting $x$ to $y$.
This implies, in particular, that there is a difference of markings map 
$\phi_{\rm r} \from x \to y$ where tension subgraph of $\phi_{\rm r}$ is all of $x$
and where the train-track structure associated to $\phi_{\rm r}$ has one, non-yo-yo illegal turn. We need the following combinatorial statement. 

\begin{claim*} 
Every edge, and every legal segment $P=\{e_1, e_2\}$ in $x$ is a subpath of an immersed 
$\phi_{\rm r}$--legal loop $\alpha$ in $x$.
\end{claim*}

\begin{proof}[Proof of Claim] \renewcommand{\qedsymbol}{$\blacksquare$} 
Let $\tau = \{ e, \be \}$ denote the only illegal turn in $x$ at vertex $v$. The graph 
$x \setminus e$ either has has a vertex of degree one, or every vertex of $x \setminus e$
has degree at least two. In the first case, $e, \be$ forms a yo-yo, which contradicts the assumption. In the second case, since every turn in $x \setminus e$ is 
$\phi_{\rm r}$--legal and every vertex has two or more gates, every edge and every length-2 legal segment is part
of an immersed legal loop. Similarly,  in $x \setminus \be$, every edge and every length-2 legal segment is part
of an immersed legal loop. Since every edge is contained either in $x \setminus e$ or in 
$x \setminus \be$, it is part of an immersed $\phi_{\rm r}$--legal loop in $x$. 

Given a legal segment $P=\{e_1, e_2\} $ of lengths 2, one of the following holds: 
\begin{itemize}
 \item $P \subset x \setminus e$
 \item $P \subset x \setminus \be$
 \item $\{e_1, e_2\} = \{e, \be \} $, and the segment starts and ends at vertex $v$
\end{itemize}

For the first two cases, we have established that $P$ is a subpath of 
an immersed $\phi_{\rm r}$--legal loop $\alpha$ in $x$. For the third case, suppose we remove $e$ and $\be$ from $x$. Since $x$ has rank 3 or higher, there is still a loop in 
$x \setminus \{ e, \be \} $, call it $\beta$. Connect $v$ and $\beta$ with 
a shortest path $p$ in $x \setminus \{ e, \be \}$. We now take $\alpha$ to be the immersed 
dumbbell shaped loop that is a concatenation of $\beta$, $\{ e, \be \} $
and two copies of $p$. This finishes the proof. 
\end{proof}

Let $z$ be a point that lies on a possibly different geodesic connecting $x$ to $y$, that is
\begin{equation}\label{Equ:unique}
d(x,z) + d(z,y) = d(x,y).
\end{equation}
Let $\phi \from x \to z$ be a difference of markings map that gives rise to a standard path
$\gamma_{\rm std}$. We decomposed $\phi=\phi_1\circ \phi_2$, where $\phi_1 \from x \to w$
represents the scaling segment of the standard path. Assume that the tension graph of 
$\phi$ is not all of $x$ and consider an edge $e \notin x_{\phi}$.

\begin{figure}[h!]

\begin{tikzpicture}
 \tikzstyle{vertex} =[circle,draw,fill=black,thick, inner sep=0pt,minimum size=.5 mm]
 
[thick, 
    scale=0.1,
    vertex/.style={circle,draw,fill=black,thick,
                   inner sep=0pt,minimum size= .5 mm},
                  
      trans/.style={thick,->, shorten >=6pt,shorten <=6pt,>=stealth},
   ]
    \node[vertex] (x) at (0, 0) [label={[xshift=-0.2cm, yshift = -0.2cm]  $x$}]  {};
    \node[vertex] (y) at (4,0) [label={[xshift=0.2cm, yshift = -0.2cm]\small $y$}] {};
    \node[vertex] (z) at ( 2, 1.2) [label={[xshift=0cm, yshift = 0cm]\small $z$}] {};
    \node[vertex]  at (0.8, 0.935)  [label={[xshift=0cm, yshift = 0cm]\small $w$}] {};
    \node at (0.25, 0.35) [label={[xshift=0cm, yshift = 0cm]\small  $\gamma_1$}]  {};
     \node at (1.5, 0.53) [label={[xshift=0cm, yshift = 0cm]\small  $\gamma_2$}]  {};

\draw[thick,->] (x) -- (y) node[midway,above,rotate=0] {$\gamma_r$};
\draw[thick, ->] (x) to [bend left = 33] (z);

\draw[thick,dash dot] (z) to [bend left = 33] (y);

          
                  \end{tikzpicture}
\end{figure}
By the claim, there exists a 
$\phi_{\rm r}$--legal immersed loop $\alpha$ containing $e$. Then
\begin{align}\label{stretch}
|\alpha|_x e^{d(x, y)} & =   \left( |\alpha|_x e^{d(x, z)} \right) e^{d(z, y)} \notag \\
                                 & < |\alpha|_z e^{d(z, y)}\\
                                 & \leq |\alpha|_y. \notag
\end{align}
But $\alpha$ is $\phi_{\rm r}$--legal, hence
\[
|\alpha|_x e^{d(x, y)} = |\alpha|_y.
\]
This is a contradiction. Thus $x_{\phi}=x$, which implies $\phi_1$ is degenerate,
$w=x$ and $\phi = \phi_2$. That is, there is a folding path $\gamma$ connecting $x$ to $z$.

Let $u=\gamma(s)$ be the first point along $\gamma$ where $\gamma_{\rm r}$ and 
$\gamma$ deviate, let $\psi_{{\rm r}, s} \from u \to y$ be the left over difference
of markings map associated to $\gamma_{\rm r}$ and $\psi_s \from u \to z$ be the left 
over difference of markings map associated to $\gamma$.  
That is, the path $\gamma \big|_{[0,s]}$ is a (possibly degenerate) sub-path of 
$\gamma_{\rm r}$, but at $u$, there is a $\psi_s$--illegal turn $\tau$ that is 
$\psi_{{\rm r}, s}$--legal. Consider the segment $P$ consists of the pair of edges
that form $\tau$. This segment is legal in $\psi_{{\rm r}, s}$, and hence by the claim, 
there exists a $\psi_{{\rm r}, s}$-legal immersed loop $\alpha$ containing $P$. 

That is, $\alpha$ is not stretching maximally from $\psi_s (s)$ to $\psi_s(s+\ep)$
and an identical to the argument to above gives a contradiction. 
Thus, the path $\psi_s$ is subpath
of $\gamma_{\rm r}$ and $z$ lies on $\gamma_{\rm r}$.

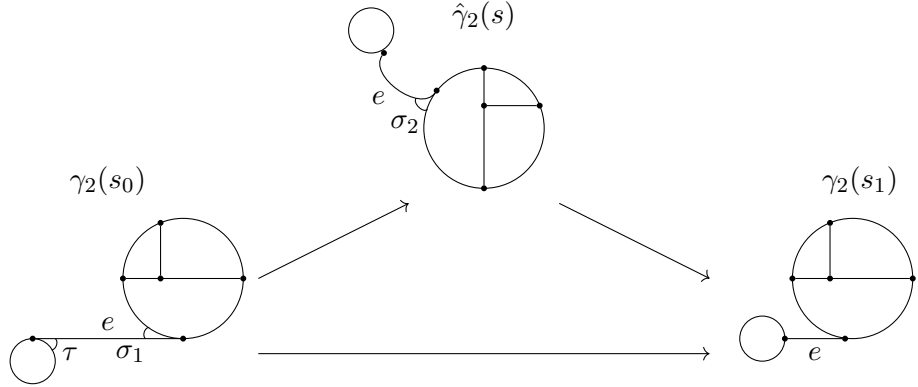
\begin{figure}[h!]
\begin{tikzpicture}
 \tikzstyle{vertex} =[circle,draw,fill=black,thick, inner sep=0pt,minimum size=.5 mm]
 
[thick, 
    scale=1,
    vertex/.style={circle,draw,fill=black,thick,
                   inner sep=0pt,minimum size= .5 mm},
                  
      trans/.style={thick,->, shorten >=6pt,shorten <=6pt,>=stealth},
   ]
   
    \node[vertex] (a) at (-1,1.2)  {};
     \node[vertex] (c) at (0.2,2)   {};
      \node[vertex] (d) at (1.8,2)  {};
        \node[vertex] (e) at (0.7,2)   {};
      \node[vertex] (f) at (0.7, 2.74)  {};
    \node[vertex] (b) at (1, 1.2)  {};
    \draw (-1,0.9) circle (0.3cm);
    \draw (1,2) circle (0.8cm);
    \draw (a)--(b);
    \draw (c)--(d);
    \draw (e)--(f);
    \draw (-0.7,1.2) to [thick, bend left = 70] (-0.75, 1.05);
    \draw (0.5,1.2) to [thick, bend left = 70] (0.55, 1.35);
    \node at (0.3, 1) {$\sigma_1$};
    \node at (-0.5, 1) {$\tau$};
    \node at (0, 1.4) {$e$};
    
      \node[vertex] (a) at (-1+5-0.33,1.2+0.8+3)  {};
       \node[vertex] (c) at (5,3.2)   {};
      \node[vertex] (d) at (5,4.8)  {};
        \node[vertex] (e) at (5,4.3)   {};
      \node[vertex] (f) at (5.74, 4.3)  {};
    \node[vertex] (b) at (5-0.8+0.5-0.33, 0.2+0.8+3+0.5)   {};
    \draw(4.1,4.4) to [bend right = 90] (4.25, 4.25);
    \node at (3.95, 4.05) {$\sigma_2$};
    \draw (4-0.5,0.3+5) circle (0.3cm);
    \draw (5,4) circle (0.8cm);
    \draw [-](a) to [bend right=90](b) ;
    \draw (c)--(d);
    \draw (e)--(f);
    \node at (3.6, 4.4) {$e$};
    
    
      \node[vertex]
      (a) at (-1+5+5,1.2)  {};
       \node[vertex] (c) at (9.1,2)   {};
      \node[vertex] (d) at (10.7,2)  {};
         \node[vertex] (e) at (9.6,2)   {};
      \node[vertex] (f) at (9.6, 2.74)  {};
    \node[vertex] (b) at (1+5-0.8-0.4+5, 1.2)  {};
    \draw (-1+5-0.3+5,0.9+0.3) circle (0.3cm);
    \draw (1+5+5-1.1,2) circle (0.8cm);
    \draw [-](a) to (b) ;
    \draw (c)--(d);
    \draw (e)--(f);
    \node at (9.4, 1) {$e$};
    
    \node at (0, 3.3) { $\gamma_2(s_0)$};
    \node at (10, 3.3) { $\gamma_2(s_1)$};
    \node at (5, 5.5) {$\hat\gamma_2(s)$};
    
    \draw [->] (2, 1) to (8,1);
    \draw [->] (2,2) to (4, 3);
    \draw [->] (6,3) to (8, 2);
\end{tikzpicture}
\caption{A yo-yo illegal turn gives rise to two geodesic paths.}
\label{nonuniqueyoyo}
\end{figure}

We now show the other direction, that is, we establish that uniqueness of a
geodesic implies that it is a rigid folding path. Consider a standard geodesic 
$[x,y]_{\rm std}$ from $x$ to $y$.  
Again, the path $[x,y]_{\rm std}$ is by definition a concatenation of a rescaling path $\gamma_1$ and a folding 
path $\gamma_2$.

Suppose that, on $\gamma_2$ there are two illegal turns at some point. Then 
\cite{FMmetric} shows that folding the two illegal turns at different speeds renders different
geodesic paths, hence obstructing uniqueness.
Otherwise, suppose $\gamma_2$ contains a yo-yo at some time $s_0$. Then a segment
$\gamma_2|_{[s_0,s_1]}$ can be replaced with a different geodesic $\hat \gamma_2$. 
Referring to \figref{nonuniqueyoyo}, the geodesic $\gamma_2|_{[s_0,s_1]}$ is obtained 
from folding the yo-yo, labeled $\tau$ from $\gamma_2(s_0)$ to $\gamma_2(s_1)$. 
However, one can also fold the edge $e$ first at $\sigma_1$ to the point $\hat\gamma_2(s)$, 
$s \in (s_0, s_1)$, and then fold $\sigma_2$ to reach $\gamma_2(s_2)=\hat\gamma_2(s_2)$.
Thus, the geodesic $\gamma_2|_{[s_0,s_1]}$ is not a unique geodesic connecting its end 
points. 

Consider now the segment $\gamma_1$. Suppose there is more than one edge that 
is not in $x_{\phi}$. Then we can choose how fast to rescale the lengths of these
edges rendering multiple geodesics with same end points as $\gamma_1$. 
Otherwise, suppose $e$ is the only edge that is not in $x_\phi$. Similar to the paths 
illustrated in \figref{nonuniqueyoyo}, $e$ can be folded onto one of its neighboring edges 
in a zigzag manner such that the end graph is isomorphic to the end point of $\gamma_1$. 
Thus $\gamma_1$ is never a unique geodesic connecting its end points, unless
it is degenerate, that is $\gamma=\gamma_2$. 

To sum up, for a standard geodesic to be unique, $\gamma_1$ is necessarily degenerate 
and $\gamma_2$ contains only one non-yo-yo illegal turn at any point. 
That is to say, it is a rigid folding path. 
\end{proof}

\begin{remark} \label{Rem:Rank-2} 
Note that, the first part of the proof still works for rank $n=2$. That is, if points $x$ and 
$y$ are connected via a rigid folding path, that path is the unique geodesic from $x$ 
to $y$. However, in ${\rm CV}_2$, even when there is a yo-yo, the geodesic
is unique. In fact, the paths $\gamma_2$ and $\hat \gamma_2$ described in 
\figref{nonuniqueyoyo} are identical in ${\rm CV}_2$. 
\end{remark}

\section{Weak convexity} \label{weaklyconvex} 

The purpose of this section is to prove the following:

\begin{theorem}\label{Maximal-Principle}
Given a difference of markings map $\phi \from x \to y$, $x, y \in \CV$, where 
$x_\phi=x$, there exists a speed assignment $\calS$ defining a folding path 
$\gamma \from [0,t_1] \to \CV$ starting at $x$ towards $y$ so that, 
for every loop $\alpha$ and every time $t \in [0,t_1]$, 
\[
|\alpha|_t \leq \max \big(  |\alpha|_x, |\alpha|_y \big).
\]
\end{theorem}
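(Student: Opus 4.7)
The plan is to construct an explicit speed assignment from the length loss induced by $\phi$ and then verify the maximum principle by a direct derivative calculation.

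Decompose $\phi$ into a local isometric quotient $\bar\phi \from x \to \by$ followed by a rescaling by $1/|\by|$. The first task is to define, for each size-2 sub-gate $\tau \in \calT_\phi$, a length loss contribution $\ell_\tau \geq 0$ satisfying, for every immersed loop $\alpha$,
\[
|\alpha|_x - |\alpha|_{\by} = 2 \sum_{\tau \in \calT_\alpha} \ell_\tau.
\]
Applying this to the union of all edges (with appropriate multiplicity) forces the consistency identity $\sum_\tau \ell_\tau = |x| - |\by|$. I would build $\ell_\tau$ by induction on the folding sequence that realizes $\bar\phi$, distributing contributions among the size-2 sub-gates coming from larger gates and from edges that wrap multiple times around one another, so that the loopwise identity holds uniformly for all immersed loops.

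Next, set the speed assignment $s_\tau := \ell_\tau$. By \propref{Prop:Fold} this defines a folding path $\gamma \from [0, t_1] \to \CV$ starting at $x$ towards $y$. On $[0, t_1]$ the combinatorial type is constant; writing $S = \sum_\tau s_\tau$ and $S_\alpha = \sum_{\tau \in \calT_\alpha} s_\tau$, the unrescaled graph satisfies $|\bar x_t| = 1 - tS$ and $|\alpha|_{\bar x_t} = |\alpha|_x - 2t\, S_\alpha$, so projectivization gives
\[
|\alpha|_t = \frac{|\alpha|_x - 2t S_\alpha}{1 - tS}.
\]

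The central computation, derived from the derivative formula of \lemref{Length-Derivative}, uses the loopwise identity $2 S_\alpha = |\alpha|_x - |\alpha|_{\by}$, the total identity $S = 1 - |\by|$, and the rescaling relation $|\alpha|_y = |\alpha|_{\by}/|\by|$ to collapse the derivative into the clean form
\[
\frac{d|\alpha|_t}{dt} = \frac{|\by|\bigl(|\alpha|_y - |\alpha|_x\bigr)}{(1 - tS)^2}.
\]
Thus the sign of $\tfrac{d}{dt}|\alpha|_t$ is constant on $[0, t_1]$ and matches the sign of $|\alpha|_y - |\alpha|_x$, so $|\alpha|_t$ is monotone. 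Formally solving $|\alpha|_t = |\alpha|_y$ yields $t = 1 \geq t_1$; combined with monotonicity, $|\alpha|_t$ lies between $|\alpha|_x$ and $|\alpha|_y$ throughout $[0, t_1]$, giving $|\alpha|_t \leq \max(|\alpha|_x, |\alpha|_y)$.

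The main obstacle is the construction of $\ell_\tau$ satisfying the loopwise additivity property. Because gates of size $\geq 3$ contain multiple size-2 sub-gates, and because $\bar\phi$ may wrap one edge around another several times (as in the introduction's $\F_3$ example), the combinatorial book-keeping is delicate and must be arranged so that every loop -- in particular every candidate loop of \thmref{candidate} -- has its length loss apportioned to the correct sub-gates. Once this step is complete, the derivative formula and the maximum principle follow immediately.
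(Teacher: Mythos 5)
Your overall strategy parallels the paper's, but the central step you flag as the ``main obstacle'' --- constructing $\ell_\tau$ so that $|\alpha|_x - |\alpha|_\by = 2\sum_{\tau\in\calT_\alpha}\ell_\tau$ holds \emph{exactly} for every immersed loop --- is not merely delicate, it is impossible whenever $x$ has a gate of size $\geq 3$. Suppose a vertex has a single gate $\sigma = \{e_1, e_2, e_3\}$ and $\bar\phi$ identifies all three edges along an initial segment of length $L$. A loop whose only illegal turn is $\tau_{1,2}$ loses length exactly $2L$ under $\bar\phi$, forcing $\ell_{\tau_{1,2}} = L$; by symmetry $\ell_{\tau_{1,3}} = \ell_{\tau_{2,3}} = L$, so $\sum_\tau \ell_\tau = 3L$. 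But $|x| - |\by| = 2L$ (three segments of length $L$ collapse to one), so your consistency identity $\sum_\tau\ell_\tau = |x|-|\by|$ is violated. The two identities you need cannot both hold.

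A related error appears in your formula $|\bar x_t| = 1 - tS$ with $S = \sum_\tau s_\tau$. When several size-$2$ sub-gates live inside a larger gate, the identifications overlap: in the example above, folding $\tau_{1,2}$, $\tau_{1,3}$, $\tau_{2,3}$ each at speed $L$ produces total length loss $2tL$, not $3tL$. So the length loss rate $|\calS|$ of the quotient is generally strictly less than $\sum_\tau s_\tau$, and the clean cancellation that collapses the derivative to $\frac{|\by|(|\alpha|_y - |\alpha|_x)}{(1-tS)^2}$ does not occur.

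The paper avoids both problems by working with inequalities in the correct directions rather than equalities. It assigns length loss contributions $\ell_\tau$ to \emph{all} sub-gates (not only size-$2$ ones) via an inductive pruning of the convex hull $\CH(p)$ of the preimage of each point $p\in T_\by$, and sets $s_\tau = \sum_{\hat\tau\supseteq\tau}\frac{\ell_{\hat\tau}}{|\hat\tau|-1}$. It then proves two one-sided estimates: $|\calS|\leq \sum_\tau\ell_\tau = 1-|\by|$ (\lemref{lengthlossspeed}), and for any vanishing path $v$, $|v|_x \leq 2\sum_{\sigma\in\Theta_v}s_\sigma$ (\lemref{Vanishing-Path}). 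These combine to give $|\alpha|_x - V_\alpha \leq \frac{|\alpha|_y - |\alpha|_x}{e^{d(x,y)}-1}$ where $V_\alpha = 2\sum_{\tau\in\T_\alpha}\frac{s_\tau}{|\calS|}$, which is exactly enough to bound the solution of the ODE $\dot{|\alpha|}_s = |\alpha|_s - V_\alpha$ by $\max(|\alpha|_x,|\alpha|_y)$. Note that this is genuinely weaker than monotonicity: the length of $\alpha$ along the balanced folding path need not be monotone, only bounded above by the max of the endpoints.
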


Recall that a speed assignment is a set $\calS=\{s_\tau\}_{|\tau|=2}$ of speeds
assigned to all sub-gates of size 2. 
For $t$ small enough, there is a quotient map $\bphi_t \from x \to \bar x_t$ (that is, 
$\bphi_t$ is an isometry along the edges of $x$) where the edges in gate $\tau$ are 
identified along a subsegment of length $t \, s_\tau$. Let $|\calS|$ be the speed at which
$\bar x_t$ is losing length, that is, 
\[
|\calS| = \frac{1-\text{total length of $\bar x_t$}}{t}
\]
Note that, this is a constant for small values of $t$. 

\begin{lemma} \label{Length-Derivative} 
Let $[x,y]_{\rm f}$ be a folding path associated to a difference of markings map 
$\phi \from x \to y$ and a speed assignment $\calS=\{s_\tau\}_{\tau \in \T}$. 
Then, for every loop $\alpha$, the derivative of the length of $\alpha$ along this
path equals 
\begin{equation} \label{Eq:Derivative}
\dot{|\alpha|}_t
 = |\alpha|_t - 2 \sum_{\tau \in \T_\phi(\alpha)} \frac{s_\tau}{|\calS|}
\end{equation} 
where the derivative is taken with respect to distance in $\CV$. 
\end{lemma}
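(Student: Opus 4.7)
The plan is to express $|\alpha|_t$ explicitly as a function of the arclength $t$ along the folding path and then differentiate. To keep notation clean, I will write $u$ for the folding-time parameter of the construction in \propref{Prop:Fold} (reserving $\tau$ for sub-gates, matching the rest of the paper), so that $x_u = \gamma_\calS(u)$ is obtained from $\bar x_u$ by rescaling to unit total length.

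The heart of the argument is a local length-loss calculation. Fix an illegal turn $\tau = \{e_1, e_2\} \in \T_\phi(\alpha)$ at a vertex $v$, so that $\alpha$ enters $v$ along $e_1$ and leaves along $e_2$. By construction of $\bar\phi_u$, the initial length-$u s_\tau$ subsegments of $e_1$ and $e_2$ at $v$ are identified into a single segment; the geodesic representative of $\alpha$ in $\bar x_u$ therefore cuts the corner at $v$, passing directly from the outer portion of $e_1$ to the outer portion of $e_2$ at the distal endpoint of this shared segment rather than detouring up to $v$ and back, saving exactly $2u s_\tau$ of length. For $u$ on the interval where the combinatorial type is constant, these subsegments at distinct crossings of $\alpha$ are disjoint, so the savings add independently to yield
\[
|\alpha|_{\bar x_u} = |\alpha|_x - 2u \sum_{\tau \in \T_\phi(\alpha)} s_\tau.
\]
Rescaling by the factor $(1 - u|\calS|)^{-1}$ to pass to $x_u$ gives $|\alpha|_{x_u} = (|\alpha|_x - 2u \sum s_\tau)/(1 - u|\calS|)$.

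To reparametrize by $\CV$-distance, observe that $\phi_u = (\text{rescaling}) \circ \bar\phi_u$ has Lipschitz constant $(1 - u|\calS|)^{-1}$ on each edge of $x$, and equals the distance $d(x,x_u)$ thanks to the optimality statement in \propref{Prop:Fold}. Hence $t = -\log(1-u|\calS|)$, equivalently $u = (1-e^{-t})/|\calS|$, and substitution yields the closed form
\[
|\alpha|_t = e^t\!\left(|\alpha|_x - \frac{2\sum_\tau s_\tau}{|\calS|}\right) + \frac{2\sum_\tau s_\tau}{|\calS|}.
\]
Differentiating in $t$ and reusing this very formula to rewrite the $e^t$ term delivers
\[
\dot{|\alpha|}_t \;=\; e^t\!\left(|\alpha|_x - \frac{2\sum s_\tau}{|\calS|}\right) \;=\; |\alpha|_t - \frac{2\sum_{\tau \in \T_\phi(\alpha)} s_\tau}{|\calS|},
\]
which is \eqnref{Eq:Derivative}. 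The main obstacle is the disjointness underpinning the first step: one must check that the folded subsegments at different illegal turns traversed by $\alpha$ do not overlap on a common edge, so that the savings add rather than interact. This is precisely the condition that the combinatorial type of $\bar x_u$ is constant on the interval $[0, t_1]$ supplied by \propref{Prop:Fold}, after which a fresh speed assignment would be needed to continue.
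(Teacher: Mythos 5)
Your proof is correct and follows essentially the same route as the paper's: both derive the explicit formula $|\alpha|_{x_u} = \bigl(|\alpha|_x - 2u\sum_{\tau\in\T_\alpha} s_\tau\bigr)/(1 - u|\calS|)$ and the relation $d(x,x_u) = -\log(1-u|\calS|)$, and then extract the derivative with respect to $\CV$-distance. The only cosmetic difference is that the paper obtains the derivative as a limit of difference quotients and then matches the answer against the right-hand side of \eqnref{Eq:Derivative} by algebraic substitution, while you reparametrize by arclength first, obtain the closed form $|\alpha|_t = e^t\bigl(|\alpha|_x - V\bigr) + V$ with $V = 2\sum_\tau s_\tau/|\calS|$, and differentiate; these are equivalent computations.
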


\begin{proof}
For every $\tau \in \T_\alpha$, there are two sub-edges of $\alpha$ of length
$t \, s_\tau$ that are identified under the quotient map $\bphi_t \from x \to \bar x_t$.
And $x_t$ is obtained from $\bar x_t$ by a scaling of factor $\frac{1}{1- t\, |\calS|}$. Hence
\begin{equation} \label{Eq:Length-at-t} 
|\alpha|_t = 
  \frac{|\alpha|_x - 2 t \sum_{\tau \in \T_\alpha} s_\tau}{1-t \, |\calS|}
\end{equation}
and, for $s>t$, 
\[
|\alpha|_s - |\alpha|_t =  
  \frac{(s-t) \, |\calS| |\alpha|_x - 2 (s-t) \sum_{\tau \in \T_\alpha} s_\tau}
    {(1-s \, |\calS|)(1-t \, |\calS|)}.
\]
Also $d(x_t,x_s) = \log\frac{(1- s|\calS|)}{(1- t|\calS|)}$. That is, when $(s-t)$ is small, 
\[
d(x_t,x_s) =\log \left(\frac{1- t|\calS|}{1- s|\calS|} \right)
  = \log\left(1 +  \frac{(s-t)|\calS|}{1- s|\calS|} \right) \sim \frac{(s-t)\, |\calS|}{1-s|\calS|}. 
\] 
Therefore, 
\begin{equation} \label{Eq:Limit}
\dot{|\alpha|}_t = \lim_{s \to t} \frac{|\alpha_s| - |\alpha|_t }{d(x_t,x_s)} 
 = \frac{|\alpha|_x - 2 \sum_{\tau \in \T_\alpha} \frac{s_\tau}{|\calS|}}{1-t \, |\calS|}.
\end{equation}
On the other hand, replacing, $|\alpha|_t$ in the right-hand side of \eqref{Eq:Derivative} 
with the expression in \eqnref{Eq:Length-at-t}, we get 
\begin{equation} \label{Eq:Time-t}
|\alpha|_t - 2 \sum_{\tau \in \T_\phi(\alpha)} \frac{s_\tau}{|\calS|} = 
 \frac{|\alpha|_x - 2 t \sum_{\tau \in \T_\alpha} s_\tau}{1-t \, |\calS|}
- 2 \sum_{\tau \in \T_\phi(\alpha)} \frac{s_\tau}{|\calS|}
 = \frac{|\alpha|_x - 2 \sum_{\tau \in \T_\alpha} \frac{s_\tau}{|\calS|}}{1-t \, |\calS|}.
\end{equation}
The right hand sides of \eqnref{Eq:Limit} and \eqnref{Eq:Time-t} are the same, hence
the left hand sides are equal. But this is what was claimed. 
\end{proof}

For given $x$ and $y$, our goal is to find an appropriate speed assignment 
so that, for every loop $\alpha$, if $|\alpha|_y \leq |\alpha|_x$ then 
$\dot{|\alpha|}_{t=0} \leq 0$. To this end, we will define values $\ell_{\tau}$ that 
quantifies the contribution of sub-gate $\tau$ to the total length loss from $x$ to $y$
and then use these to define a speed assignment.
For the remainder of this section, let $\bar y \in \cv$ be the representative in the 
projective class of $y$ so that the associated change of marking map
$\bphi \from x \to \bar y$ restricted to every edge is a length preserving immersion. 
Also, let $\Phi \from T_x \to T_{\bar y}$ be a lift of $\bphi$.

Consider a point $p \in T_{\bar y}$ and let $\Pre(p) \subset T_x$ denote the set 
of pre-images of $p$ under the map $\Phi$ and let $\CH(p)$ denote the convex hull of 
$\Pre(p)$ in $T_x$.  
We give $\CH(p)$ a tree structure where there are no degree $2$ vertices; 
some edges of $\CH(p)$ may consist of several edges in $T_x$. 
The tree $\CH(p)$ also inherits its illegal turn structure from $T_x$, however,
an edge of $\CH(p)$ may contain one or more illegal turns. Also, note that, 
since all end points of $\CH(p)$ map to $p$, $\CH(p)$ does not contain any legal 
path connecting its end vertices. 

We denote the set of sub-gates of $T_x$ by $\Theta$. For each sub-gate 
$\theta \in \Theta$, we assign a weight $c(\sigma, p)$ to 
$\sigma$ which measures how much of the branching of $\CH(p)$ is due to 
$\sigma$. We do this inductively. In fact, we can do this for any finite subtree 
$T \subset T_x$ with the property that $T$ does not contain any legal paths connecting 
its end vertices. 

For a tree $T$, a vertex is an \emph{outer vertex} is it has degree one and an edge
is an \emph{outer edge} if one of its vertices has degree one. All other edges
are called inner edges. Assuming $T$ contains some inner edges we apply one 
of the following two steps.

\subsection*{Step 1} Consider all the outer edges of $T$. If there is an edge $e$ 
that contains illegal turns $\sigma_1, \dots, \sigma_k$, then we define 
$c(\sigma_i, p) = \frac 1k$ and we remove this edge from $T$. 
The total weight assignment is $1$ and the number of end vertices of $T$ is reduced 
by one. We observe that the remaining subtree still has the property
that it does not contain any legal paths connecting its end vertices. To see this, noticing that
since we removed exactly one topological edge, the degree of the vertex at which we moved this edge is still two or higher. 
This implies we did not create a new leaf by removing one edge, which means a legal path the would have appeared after this step
already exists before the step. But that contradicts our assumption. We apply Step 1
as many times as possible. 

\subsection*{Step 2}
We claim that there is a vertex of $T$ that has exactly two gates where one gate contains one edge, and the other gate contains only outer edges. To see this, consider the longest 
embedded path $v_0, v_1,\dots, v_m$ in $T$. Then, all but one of the edges 
connected to $v_1$ is an outer edge otherwise the path can be made longer. 
If there are more that two gates at $v_1$, or if the non-outer edge is not in its own gate, then $T$ contains a two-edge legal 
path connecting its end vertices which is a contradiction. This proves the claim. 

That is, we have found a gate $\sigma$ at a vertex $v$ where all the edges 
associated to $\sigma$ are outer edges and there is exactly one edge in the only other  
gate at $v$. We define $c(\sigma, p)=  |\sigma|-1$ and delete all these edges 
associated to $\sigma$ from $T$. Note that, the number of ends of the 
tree is reduced by $|\sigma|-1$ and the remaining tree still has the property 
the it does not contain any legal path connecting its end vertices (any such path
could be extended to a legal path in the larger tree).  

We proceed in this way, by first applying Step 1 as many times as possible, and when that is not possible Step 2.
This is a finite process since each time the number of edges in $T$ decreases. When we cannot
apply either Step 1 or Step 2, what is left is necessarily a one-gate vertex with a finite set of edges.  In this case we define
$c(\sigma, p)=  |\sigma|-1$ for that gate $\sigma$.  Again, the number of ends is one more than 
weight assigned. To summarize, the weight assigned at every step
is equal to the number of end vertices removed. We have shown 

\begin{lemma}\label{contribution}
For any point $p \in T_{\bar y}$, we have
$$
\sum_{\sigma \in \Theta} c(\sigma, p) =|\Pre(p)|-1
$$
where the sum is over all sub-gates in $T_x$. 
\end{lemma}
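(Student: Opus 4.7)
The plan is to establish the identity by tracking the leaf count of $T$ throughout the iterative construction and observing that at every step the total weight assigned equals the drop in leaf count. Applied to the initial tree $T = \CH(p)$, whose outer vertices are precisely the elements of $\Pre(p)$ (as noted before the lemma, all end points of $\CH(p)$ map to $p$), this yields the claimed formula.

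More precisely, let $L(T)$ denote the number of outer vertices of the current tree $T$ and let $W$ denote the total weight $\sum c(\sigma,p)$ accumulated so far. I would verify the invariant that the weight added in a single step equals the corresponding decrease of $L$. In Step 1, removing a single outer edge $e$ that carries illegal turns $\sigma_1,\dots,\sigma_k$ contributes total weight $k \cdot \tfrac{1}{k} = 1$; the leaf at the free end of $e$ disappears, and its other endpoint retains degree at least $2$ (as argued in the description of Step 1), so $L$ decreases by exactly $1$. In Step 2, the $|\sigma|$ outer edges attached to the selected vertex $v$ are removed, contributing weight $|\sigma|-1$; their outer endpoints disappear but $v$ itself becomes a new leaf of the remaining tree, for a net decrease in $L$ of exactly $|\sigma|-1$. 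Finally, the terminating configuration is a single-gate vertex $v$ with $|\sigma|$ incident edges, where we assign weight $|\sigma|-1$, matching the drop of $L$ from $|\sigma|$ to $1$ when we collapse to a point.

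Summing over all steps yields the telescoping identity $W = L(\CH(p)) - 1 = |\Pre(p)| - 1$, which is precisely the stated equality. Finiteness of the process is already noted in the text (the edge count strictly decreases at each step), so no further induction is required beyond the per-step accounting. The only subtle point is the bookkeeping in Step 2, where one must remember that the central vertex $v$ becomes a new outer vertex after the outer edges attached to it are deleted; this is precisely why $\sigma$ contributes $|\sigma|-1$ rather than $|\sigma|$.
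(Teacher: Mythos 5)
Your argument is correct and is essentially the paper's own proof: the paper likewise observes that each application of Step~1 or Step~2 assigns total weight equal to the number of outer vertices removed, and that the terminal one‑gate vertex with $|\sigma|$ edges receives weight $|\sigma|-1$, so the weights telescope to (number of outer vertices of $\CH(p)$)$-1=|\Pre(p)|-1$. Your explicit invariant $L(T)-W=\mathrm{const}$ is just a cleaner bookkeeping of the same observation, including the key point in Step~2 that the central vertex $v$ becomes a new leaf after its gate is deleted.
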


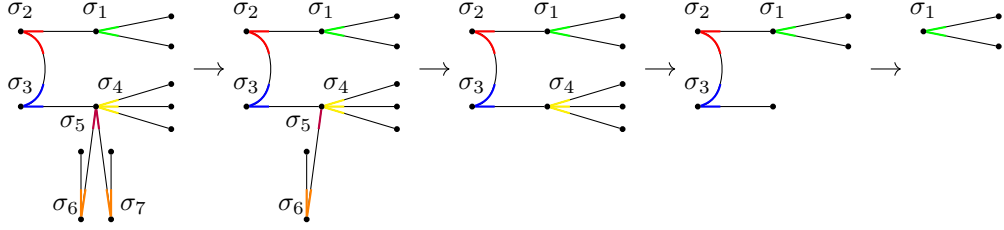
\begin{figure}
\begin{tikzpicture}
 \tikzstyle{vertex} =[circle,draw,fill=black,thick, inner sep=0pt,minimum size=.5 mm]
 
[thick, 
    scale=0.1,
    vertex/.style={circle,draw,fill=black,thick,
                   inner sep=0pt,minimum size= .5 mm},
                  
      trans/.style={thick,->, shorten >=6pt,shorten <=6pt,>=stealth},
   ]
   
    \node[vertex] (c11) at (0-3, 4) [label={[xshift=0cm, yshift = 0cm] \small  $\sigma_2$}]  {};
    \node[vertex] (c21) at (1-3, 4) [label={[xshift=0cm, yshift = 0cm]\small $\sigma_1$}] {};
    \node[vertex] (c31) at ( 2-3, 4.2)  {};
    \node[vertex] (c32) at ( 2-3, 3.8) {};
    \node [vertex] (c12) at (0-3, 3) [label={[xshift=0cm, yshift = 0cm]\small $\sigma_3$}] {};
    \node [vertex] (c22) at (1-3,3) [label={[xshift=-0.3cm, yshift = -0.5cm]\small $\sigma_5$}] [label={[xshift=0.2cm, yshift = 0cm]\small $\sigma_4$}] {};
    \node [vertex] (c33) at (2-3, 3.3) {};
    \node [vertex] (c34) at (2-3, 3) {};
    \node [vertex] (c35) at (2-3, 2.7) {};
    \node [vertex] (c23) at (0.8-3, 1.5) [label={[xshift=-0.2cm, yshift = -0.1cm]\small $\sigma_6$}] {};
    \node [vertex] (c24) at (0.8-3, 2.4) {};
    \node [vertex] (c25) at ( 1.2-3, 1.5) [label={[xshift=0.3cm, yshift = -0.1cm]\small $\sigma_7$}] {};
    \node [vertex] (c26) at (1.2-3, 2.4) {}; 
                
    \draw (c11)-- (c21)--(c31);
   
    \draw (c21)--(c32);
   
     \draw (c12)-- (c22)--(c33);
     
     \draw (c22)--(c34);
      \draw (c22)--(c35);
\draw (c22)--(c23)--(c24);
\draw (c22)--(c25) --(c26);
\draw [-] (c11) to [bend left=80] (c12);
  \draw[thick, red](c11)--(0.3-3,4);
    \draw[-,thick, red] (c11) to [bend left = 30] (0.3-3,3.7);
     \draw[thick, green](c21)--(1.3-3, 4.06);
     \draw[thick, green](c21)--(1.3-3, 3.94);
      \draw[thick, blue](c12)--(0.3-3,3);
      \draw[-,thick, blue] (c12) to [bend right = 30] (0.3-3,3.3);
\draw[thick, yellow](c22)--(1.3-3, 3);
\draw[thick, yellow](c22)--(1.3-3, 3.09);
\draw[thick, yellow](c22)--(1.3-3, 2.91);
\draw[thick, purple](c22)--(0.96-3,2.7);
\draw[thick, purple](c22)--(1.04-3,2.7);
\draw[thick, orange](c23)--(0.8-3,1.9);
\draw[thick, orange](c23)--(0.86-3, 1.9);
\draw[thick, orange](c25)--(1.2-3, 1.9);
\draw[thick, orange](c25)--(1.14-3, 1.9);

    \node[vertex] (c11) at (0, 4) [label={[xshift=0cm, yshift = 0cm] \small  $\sigma_2$}]  {};
    \node[vertex] (c21) at (1, 4) [label={[xshift=0cm, yshift = 0cm]\small $\sigma_1$}] {};
    \node[vertex] (c31) at ( 2, 4.2)  {};
    \node[vertex] (c32) at ( 2, 3.8) {};
    \node [vertex] (c12) at (0, 3) [label={[xshift=0cm, yshift = 0cm]\small $\sigma_3$}] {};
    \node [vertex] (c22) at (1,3) [label={[xshift=-0.3cm, yshift = -0.5cm]\small $\sigma_5$}] [label={[xshift=0.2cm, yshift = 0cm]\small $\sigma_4$}] {};
    \node [vertex] (c33) at (2, 3.3) {};
    \node [vertex] (c34) at (2, 3) {};
    \node [vertex] (c35) at (2, 2.7) {};
    \node [vertex] (c23) at (0.8, 1.5) [label={[xshift=-0.2cm, yshift = -0.1cm]\small $\sigma_6$}] {};
    \node [vertex] (c24) at (0.8, 2.4) {};
                
    \draw (c11)-- (c21)--(c31);
   
    \draw (c21)--(c32);
   
     \draw (c12)-- (c22)--(c33);
     
     \draw (c22)--(c34);
      \draw (c22)--(c35);
\draw (c22)--(c23)--(c24);
\draw [-] (c11) to [bend left=80] (c12);
  \draw[thick, red](c11)--(0.3,4);
    \draw[-,thick, red] (c11) to [bend left = 30] (0.3,3.7);
     \draw[thick, green](c21)--(1.3, 4.06);
     \draw[thick, green](c21)--(1.3, 3.94);
      \draw[thick, blue](c12)--(0.3,3);
      \draw[-,thick, blue] (c12) to [bend right = 30] (0.3,3.3);
\draw[thick, yellow](c22)--(1.3, 3);
\draw[thick, yellow](c22)--(1.3, 3.09);
\draw[thick, yellow](c22)--(1.3, 2.91);
\draw[thick, purple](c22)--(0.96,2.7);
\draw[thick, orange](c23)--(0.8,1.9);
\draw[thick, orange](c23)--(0.86, 1.9);

    \node[vertex] (c11) at (0+3, 4) [label={[xshift=0cm, yshift = 0cm]\small $\sigma_2$}]  {};
    \node[vertex] (c21) at (1+3, 4) [label={[xshift=0cm, yshift = 0cm]\small $\sigma_1$}] {};
    \node[vertex] (c31) at ( 2+3, 4.2)  {};
    \node[vertex] (c32) at ( 2+3, 3.8) {};
    \node [vertex] (c12) at (0+3, 3) [label={[xshift=0cm, yshift =0cm]\small $\sigma_3$}] {};
    \node [vertex] (c22) at (1+3,3)  [label={[xshift=0.2cm, yshift =0cm]\small $\sigma_4$}] {};
    \node [vertex] (c33) at (2+3, 3.3) {};
    \node [vertex] (c34) at (2+3, 3) {};
    \node [vertex] (c35) at (2+3, 2.7) {};

    \draw (c11)-- (c21)--(c31);
    \draw (c21)--(c32);
     \draw (c12)-- (c22)--(c33);
     \draw (c22)--(c34);
      \draw (c22)--(c35);
\draw [-] (c11) to [bend left=80] (c12);

  \draw[thick, red](c11)--(0.3+3,4);
    \draw[-,thick, red] (c11) to [bend left = 30] (0.3+3,3.7);
       \draw[thick, green](c21)--(1.3+3, 4.06);
     \draw[thick, green](c21)--(1.3+3, 3.94);
        \draw[thick, blue](c12)--(0.3+3,3);
      \draw[-,thick, blue] (c12) to [bend right = 30] (0.3+3,3.3);
\draw[thick, yellow](c22)--(1.3+3, 3);
\draw[thick, yellow](c22)--(1.3+3, 3.09);
\draw[thick, yellow](c22)--(1.3+3, 2.91);

    \node[vertex] (c11) at (0+6, 4) [label={[xshift=0cm, yshift =0cm]\small $\sigma_2$}]  {};
    \node[vertex] (c21) at (1+6, 4) [label={[xshift=0cm, yshift =0cm]\small $\sigma_1$}] {};
    \node[vertex] (c31) at ( 2+6, 4.2)  {};
    \node[vertex] (c32) at ( 2+6, 3.8) {};
    \node [vertex] (c12) at (0+6, 3) [label={[xshift=0cm, yshift =0cm]\small $\sigma_3$}] {};
    \node [vertex] (c22) at (1+6,3)  {};

    \draw (c11)-- (c21)--(c31);
    \draw (c21)--(c32);
     \draw (c12)-- (c22);
\draw [-] (c11) to [bend left=80] (c12);

  \draw[thick, red](c11)--(0.3+6,4);
    \draw[-,thick, red] (c11) to [bend left = 30] (0.3+6,3.7);
       \draw[thick, green](c21)--(1.3+6, 4.06);
     \draw[thick, green](c21)--(1.3+6, 3.94);
        \draw[thick, blue](c12)--(0.3+6,3);
      \draw[-,thick, blue] (c12) to [bend right = 30] (0.3+6,3.3);

    
   \node[vertex] (c21) at (1+8, 4) [label={[xshift=0cm, yshift =0cm]\small $\sigma_1$}] {};
    \node[vertex] (c31) at ( 2+8, 4.2)  {};
    \node[vertex] (c32) at ( 2+8, 3.8) {};    \draw (c21)--(c31);
    
    \draw (c21)--(c32);
      \draw[thick, green](c21)--(1.3+8, 4.06);
     \draw[thick, green](c21)--(1.3+8, 3.94);     
     \draw[->] (2.3-3,3.5) to (2.7-3,3.5);
    \draw[->] (2.3,3.5) to (2.7,3.5);
        \draw[->] (5.3,3.5) to (5.7,3.5);
            \draw[->] (8.3,3.5) to (8.7,3.5);
          
\end{tikzpicture}

\caption{ $CH(p)$ is computed iteratively by applying Step 1 as many times as possible and then apply Step 2, and then repeat.}
\label{chp}
\end{figure}

\begin{example}
The example in \figref{chp} illustrate the definition of $c(\sigma, p)$. Suppose $CH(p)$ is as shown in the leftmost graph, with seven outer edges and seven illegal turns, marked $\{ \sigma_1, \sigma_2,... \sigma_7\}$. First apply Step 1. There are two possible candidates for Step 1. The illegal turns that occur as part of a topological edge are $\sigma_6$ and $\sigma_7$, which belong to different topological edges. It does not matter which one of them is assigned first. After applying Step 1 twice, we get 
\begin{align*}
c(\sigma_6, p) = 1 \text{ and } c(\sigma_7, p) = 1. \\
\end{align*}
After deleting these two topological edges, $\sigma_5$ no longer exists, therefore
$$c(\sigma_5, p) =0. $$
Next, Step 2 picks out either $\sigma_1$ or $\sigma_4$. Suppose we start with $\sigma_4$, which contains 3 outer edges, thus 
$$
c(\sigma_4, p) = 3-1 = 2. 
$$
After deleting these three outer edges of $\sigma_4$, we apply Step 1 again and observe that there is a new topological edge with two illegal turns $\sigma_2$ and $\sigma_3$. Thus
$$
c(\sigma_2, p) = c(\sigma_3, p) = \frac{1}{2}. 
$$
And the topological edge is deleted at the end of the step. Lastly we have only one gate with two outer edge, thus 
$$
c(\sigma_1, p) = 1. 
$$
It can be verified that $$
\sum_{\sigma \in \Theta} c(\sigma, p) = 1+1+0+2+\frac{1}{2}+\frac{1}{2}+1 =6=|\Pre(p)|-1.$$
\end{example}
We now use the $c(\sigma, p)$ functions to define the length loss functions: 
\[
\ell_{\sigma} = \int_{T_{\bar y}} c(\sigma, p) \, dp
\]
where the integral is taken with respect to the length in $T_{\bar y}$. Even though
this is integral over a non-compact set, for every $\sigma$, the set of points $p$ where 
$c(\sigma, p)$ is non-zero is compact and hence the integral is finite. 
Also, since our construction is equivariant, for any sub-gate $\tau \in \T$(Recall $\calT$ is the set of all sub-gates in $x$ under the map $\phi$.) 
we can define $\ell_\tau= \ell_\sigma$ where $\sigma$ is any lift of $\tau$ to $T_x$. 

The number $\ell_{\tau}$ represent how much of the length loss from $x$ to $\bar y$ we
are attributing to the sub-gate $\tau$. In particular, we have 

\begin{lemma}\label{lengthlossfunction}
For $\bphi \from x \to \bar y$ and $\ell_\tau$ defined as above, we have 
\[
\sum_{\tau \in \T} \ell_{\tau} = |x| -|\bar y|
\]
\end{lemma}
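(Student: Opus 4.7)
The plan is to integrate the pointwise identity of \lemref{contribution} over a fundamental domain for the $\F_n$--action on $T_{\by}$, and then reorganize the resulting sum over $\Theta$ into a sum over $\T$ using the $\F_n$--equivariance of the construction.

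First I pick a fundamental domain $D \subset T_{\by}$ for the (free) $\F_n$--action, so $|D| = |\by|$. Since $\bphi$ is a length--preserving local isometry on every edge of $x$ and the action on $T_{\by}$ is free, for every $p \in T_{\by}$ the projection $T_x \to x$ sends $\Pre(p) = \Phi^{-1}(p)$ bijectively onto the fiber $\bphi^{-1}(\bar p) \subset x$. A standard coarea/change of variables computation therefore gives
\[
\int_D |\Pre(p)|\, dp \;=\; \int_{\by} |\bphi^{-1}(q)|\, dq \;=\; |x|.
\]
Integrating the pointwise identity $\sum_{\sigma \in \Theta} c(\sigma, p) = |\Pre(p)| - 1$ from \lemref{contribution} over $D$ then produces
\[
\int_D \sum_{\sigma \in \Theta} c(\sigma, p)\, dp \;=\; |x| - |\by|.
\]

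It remains to identify the left-hand side with $\sum_{\tau \in \T} \ell_\tau$. The construction of $c$ is $\F_n$--equivariant in the sense that $c(g\sigma, gp) = c(\sigma, p)$ for every $g \in \F_n$; this is immediate since $\CH(gp) = g \cdot \CH(p)$ and Steps~1 and~2 depend only on the intrinsic combinatorial/metric type of the subtree together with its gate structure. The orbits of $\Theta$ under $\F_n$ are in bijection with $\T$, so fixing a lift $\sigma_\tau$ of each $\tau$ and using the $\F_n$--invariance of the length measure,
\[
\int_D \sum_{g \in \F_n} c(g\sigma_\tau, p)\, dp
\;=\; \sum_{g \in \F_n} \int_{g^{-1} D} c(\sigma_\tau, q)\, dq
\;=\; \int_{T_{\by}} c(\sigma_\tau, p)\, dp \;=\; \ell_\tau.
\]
Summing over $\tau$ and comparing with the previous display gives $\sum_{\tau \in \T} \ell_\tau = |x| - |\by|$.

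The only real work is securing the coarea identity $\int_D |\Pre(p)|\, dp = |x|$, which rests on the edgewise local-isometry property of $\bphi$ together with the fact that vertices form a measure-zero set. Absolute convergence for the interchange of sum and integral in the unfolding step is guaranteed by the paper's observation that $\{p : c(\sigma, p) \neq 0\}$ is compact for each $\sigma$; once this is noted the rest is bookkeeping.
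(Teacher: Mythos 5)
Your proof is correct and follows essentially the same route as the paper's: both rely on the identity $|x| = \int_{\by} |\Pre(q)|\,dq$ from the edgewise local isometry, on Lemma~\ref{contribution}, and on an $\F_n$--equivariance/unfolding argument over a fundamental domain. The only difference is directional bookkeeping — you compute $\int_D \sum_{\sigma\in\Theta} c(\sigma,p)\,dp = |x|-|\by|$ first and then fold the orbit sum into $\sum_{\tau}\ell_\tau$, whereas the paper starts from $\sum_{\tau}\ell_\tau$ and unfolds — but the content is identical.
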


\begin{proof}
We denote points in $\by$ by $q$ and $\Pre(q)$ represents the pre-image
of $q$ under $\bphi$. Since the map $\bphi$ is locally a length preserving immersion, 
we have
\[
1 = |x| = \int_{\bar y} |\Pre(q)| \, dq. 
\]
Let $T_0 \subset T_{\by}$ be a tree that is a fundamental domain of action $\F_n$ on
$T_{\by}$ and let $\Theta_0$ be a finite subset of $\Theta$ that contains exactly one
lift for every $\tau \in \T$. Now, 
\begin{align*}
 \sum_{\tau \in \T} \ell_{\tau}  =  \sum_{\sigma \in \Theta_0} \ell_{\sigma}  
  &=  \sum_{\sigma \in \Theta_0}  \int_{T_{\bar y}} c(\sigma, p) \, dp\\
  &=  \sum_{g \in \F_n} \sum_{\sigma \in \Theta_0}  \int_{T_0} c(\sigma, g(p)) \, dp
    \tag{$T_\by = \cup_g \, g(T_0)$}\\
  &=  \sum_{g \in \F_n} \sum_{\sigma \in \Theta_0}  \int_{T_0} c(g^{-1}(\sigma), p) \, dp
    \tag{$c(\param, \param)$ is equivariant}\\
  &= \int_{T_0}  \sum_{\sigma \in \Theta}  c(\sigma, p) \, dp
          \tag{$\Theta = \cup_g \, g(\Theta_0)$}\\
  &= \int_{T_0} |\Pre(p)| - 1 \, dp  \tag{\lemref{contribution}}\\
  &= \int_{\by} |\Pre(q)| - 1 \, dq = |x| - |\by|. 
\end{align*} 
which what was claimed in the lemma. 
\end{proof}
  
Next, we use length loss contributions $\ell_\tau$ to define a speed assignment
$\calS$. For a sub-gate $\tau$ with $|\tau|=2$, define 
\begin{equation} \label{Eq:Speed} 
 s_\tau = \sum_{\hat \tau \supseteq \tau } \frac{\ell_{\hat \tau}}{|\hat \tau|-1}
\end{equation}
where the sum is over all sub-gates $\hat \tau$ containing $\tau$. We are dividing 
$\ell_{\hat \tau}$ by $(|\hat \tau|-1)$ because if you fold edges of $\hat \tau$ along 
a segment of length $t$, the length loss is larger by factor $(|\hat \tau|-1)$. The set
$\calS= \{ s_\tau \}_{|\tau|=2}$ is our desired speed assignment. 

\begin{example}\label{Ex:introexample}        
We illustrate the computation of $s_{\tau}$ with the example mentioned in the introduction. Consider $\F_3 = \langle a, b, c \rangle$. Let $x$
be a rose with three petals. The three edges we refer to as $e_1, e_2, e_3$. The edge $e_1$ is labeled $ac^2$, the edge $e_2$ is labeled $bc$, the edge $e_3$ is labeled $c$.  The edge lengths are $ \frac{1}{2}, \frac{1}{3}, \frac{1}{6}$, respectively.  The graph $\by$
is a rose of three petals with labels $\{a, b, c\}$ and lengths $\{ \frac{1}{6},\frac{1}{6},\frac{1}{6} \} $. The construction is such that $\phi \from x \to y$ satisfies $x_{\phi} = x$. $\by$ is obtained from $x$ by wrapping $ac^2$ around c twice and $bc$ around $c$ once.

$T_{\by}$ contains three types of edges. If the point $p$ is on an edge labeled $a$ or $b$, then the pre-image contains only one copy of $p$, and $c(\tau, p) = 0$ for all $\tau$. If $p$ is on the $c$-edge, then 
$CH(p)$ is as shown in \figref{introexample}, where the four pre-images of $p$ are marked with a circle. 

\begin{figure}
\begin{tikzpicture}
 \tikzstyle{vertex} =[circle,draw,fill=black,thick, inner sep=0pt,minimum size=.3 mm]
  \tikzstyle{point} =[circle,draw=purple,fill=white,thick, inner sep=0pt,minimum size=1 mm]
 \tikzstyle{tinyvertex} =[circle,draw,fill=black,thick, inner sep=0pt,minimum size=.2 mm]
 
[thick, 
    scale=0.1,
    vertex/.style={circle,draw,fill=black,thick,
                   inner sep=0pt,minimum size= .5 mm},
                  
      trans/.style={thick,->, shorten >=6pt,shorten <=6pt,>=stealth},
   ]
    \node[vertex] (c00) at (0, 0) [label={[xshift=0cm, yshift = 0cm] \tiny  }]  {};
     \node[tinyvertex] (c01) at (0.5, 1) [label={[xshift=0cm, yshift = 0cm] \tiny  }]  {};
     \node[tinyvertex] (c02) at (1, 2) [label={[xshift=0cm, yshift = 0cm] \tiny  }]  {};

    \node[vertex] (c03) at (1.5, 3) [label={[xshift=0cm, yshift = 0cm]\tiny }] {};
    
    \node[tinyvertex] (c11) at (1.5, 1) [label={[xshift=0cm, yshift = 0cm] \tiny  }]  {};
     \node[tinyvertex] (c12) at (2, 2) [label={[xshift=0cm, yshift = 0cm] \tiny  }]  {};

    \node[vertex] (c13) at (2.5, 3) [label={[xshift=0cm, yshift = 0cm]\tiny }] {};
    \node[vertex] (c10) at ( 1, 0) [label={[xshift=0cm, yshift = 0cm] \tiny  }]  {}; {};
    \node[vertex] (c20) at ( 2, 0)[label={[xshift=0cm, yshift = 0cm] \tiny  }]  {}; {};
    \node [vertex] (c32) at (3, -1) [label={[xshift=0cm, yshift = 0cm]\tiny }] {};
     \node [tinyvertex] (c22) at (2, -0.5) [label={[xshift=0cm, yshift = 0cm]\tiny }] {};

    \draw (c00)--(c03);
    \draw(c00)--(c10)--(c20);
        \draw(c10)--(c11)--(c12)--(c13);
    \draw(c10)--(c32);
    
    \node at (0.25,0.2) [label={[xshift=-0.1cm, yshift = 0cm] \tiny  $c$}]  {};
     \node at (0.75,1.2) [label={[xshift=-0.1cm, yshift = 0cm] \tiny  $c$}]  {};
      \node at (1.25,2.2) [label={[xshift=-0.1cm, yshift = 0cm] \tiny  $a$}]  {};
      
      \node at (1.25,0.2) [label={[xshift=-0.1cm, yshift = 0cm] \tiny  $c$}]  {};
     \node at (1.75,1.2) [label={[xshift=-0.1cm, yshift = 0cm] \tiny  $c$}]  {};
      \node at (2.25,2.2) [label={[xshift=-0.1cm, yshift = 0cm] \tiny  $a$}]  {};
      
       \node at (0.5, -0.2) [label={[xshift=0cm, yshift = 0cm] \tiny  $c$}]  {};
       \node at (1.5, -0.2) [label={[xshift=0cm, yshift = 0cm] \tiny  $c$}]  {};
       
       \node at(1.3, -0.7)[label={[xshift=0cm, yshift = 0cm] \tiny  $c$}]  {};
        \node at(2.6, -1)[label={[xshift=0cm, yshift = 0cm] \tiny  $b$}]  {};

\draw[thin, green] (c00)--(c10)--(c20) ;
\draw[thin, blue] (c10)--(c32);

 \node [point] at (0.85, 1.7) [label={[xshift=0cm, yshift = 0cm]}] {};
          \node [point] at (1.35, 0.7) [label={[xshift=0cm, yshift = 0cm]}] {};
               \node [point] at (1.7, 0) [label={[xshift=0cm, yshift = 0cm]}] {};
                    \node [point] at (1.7, -0.35) [label={[xshift=0cm, yshift = 0cm]}] {};

\draw [-] (0.05,0.1) to [bend left=80] (0.1, 0);
\node at(-0.1, -0.55) [label={[xshift=0cm, yshift = 0cm] \tiny  $\sigma_{e_1e_3}$}]  {};
\draw [-] (1.05,0.1) to [bend left=80] (1.1, -0.05);
\node at(0.9, -0.55) [label={[xshift=0cm, yshift = 0cm] \tiny  $\sigma_{e_1e_2e_3}$}]  {};

                  \end{tikzpicture}
                  \caption{The $CH(p)$ of Example~\ref{Ex:introexample} where $p$ is a point on the edge labeled $c$ in $y$}
                  \label{introexample}
                  \end{figure}

$CH(p)$ has two gates. One contains a black and green edge, which we denote $\sigma_{e_1e_3}$. The other gate contains three edges, black, green and blue, and we denote the gate $\sigma_{e_1e_2e_3}$. At Step 1, $$c(\sigma_{e_1e_3}, p) = 1;$$ 
at Step 2, $$\sigma_{e_1e_2e_3} =2.$$

Next we compute the length loss function by integrating $c(\param, \param)$ over $T_{\by}$. In this case, the only non-zero component of the integral is when integrating over edge labeled $c$, which has length $\frac{1}{6}$, therefore:
\[
\ell_{e_1e_3}= 1 \times \frac{1}{6} = \frac {1}{6} \]
\[ \ell_{e_1e_2e_3} = 2 \times \frac{1}{6} = \frac {1}{3}.
\]
Indeed, it is the case that 
$$\frac{1}{3} + \frac{1}{6}  = \sum_{\sigma} \ell_{\sigma} = x - \by = 1 - \frac{1}{2} = \frac{1}{2}$$
Based on the length loss functions we compute the folding speed of all sub-gates in $x$. Again we can denote a sub-gate in $x$ by the edges in the gate, so we have 
$$
s_{e_1e_3} = l_{e_1e_3} + \frac{1}{2} \ell_{e_1e_2e_3} = \frac{1}{6} + \frac{1}{2} \times \frac{1}{3} = \frac{1}{3}$$

$$
s_{e_1e_2} = s_{e_2e_3}  =  \frac{1}{2} \ell_{e_1e_2e_3} =\frac{1}{2} \times \frac{1}{3} = \frac{1}{6}$$

That is to say, since $ac^2$ wraps over $c$ twice while $bc$ wraps over $c$ once, infinitesimally, the folding associated to the former is twice as fast. 
\end{example}
 
\begin{lemma}\label{lengthlossspeed}
For the speed assignment $\calS$ above, we have
\[
|\calS| \leq \sum_{\tau \in \T} \ell_\tau.  
\]
\end{lemma}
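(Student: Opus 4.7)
The plan is to reduce to a local computation at each gate of $x$ and interpret the length loss there as the weight of a maximum spanning forest of an associated weighted complete graph. For a vertex $v\in x$ and a gate $\hat g$ at $v$ with edges $e_1,\ldots,e_k$, the identifications defining $\bx_t$ happen for small $t$ in disjoint short neighborhoods of the vertices of $x$, so $|\calS|\cdot t = \sum_{\hat g} L_{\hat g}$, where $L_{\hat g}$ is the local length loss at the gate $\hat g$. To compute $L_{\hat g}$, I would form the complete weighted graph $G_{\hat g}$ on vertex set $\{e_1,\ldots,e_k\}$ with $w_{ij} = t\cdot s_{\{e_i,e_j\}}$. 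At distance $r$ from $v$, the quotient $\bx_t$ identifies $e_i$ with $e_j$ iff they lie in the same connected component of the threshold subgraph $G_{\hat g,r}$ of edges of weight $\geq r$. Integrating the matroid rank (number of vertices minus number of components) against $dr$ and invoking the standard identity that this integral equals the total weight of a maximum spanning forest (essentially Kruskal's algorithm) gives
\[
L_{\hat g} \;=\; \sum_{e\in \mathrm{MSF}(G_{\hat g})} w_e.
\]

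The second step is to plug in
\[
s_{\{e_i,e_j\}} \;=\; \sum_{\hat\tau \supseteq \{e_i,e_j\}} \frac{\ell_{\hat\tau}}{|\hat\tau|-1}
\]
from \eqref{Eq:Speed} and swap the order of summation, which rewrites $L_{\hat g}/t$ as
\[
\sum_{\hat\tau \subseteq \hat g} \frac{\ell_{\hat\tau}}{|\hat\tau|-1}\,\bigl|\{e\in \mathrm{MSF}(G_{\hat g}) : e\subseteq \hat\tau\}\bigr|.
\]
The key combinatorial input is that a forest on $\{e_1,\ldots,e_k\}$, restricted to any vertex subset $\hat\tau$, is itself a subforest on $\hat\tau$ and therefore contains at most $|\hat\tau|-1$ edges. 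Substituting this bound yields $L_{\hat g}/t \leq \sum_{\hat\tau\subseteq \hat g} \ell_{\hat\tau}$, and summing over all gates of $x$ (noting that every sub-gate in $\T$ lies inside a unique gate) produces $|\calS|\leq \sum_{\tau\in\T}\ell_\tau$, as desired.

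The main obstacle is justifying the first step rigorously in the present setting. One has to describe carefully the equivalence relation on the edges of $\hat g$ generated by the pairwise identifications at speeds $s_{\{e_i,e_j\}}$, confirm that at level $r$ its classes coincide with the components of the threshold subgraph $G_{\hat g,r}$, and verify that the rank function $r\mapsto k - \#\{\text{components of } G_{\hat g,r}\}$ is a step function whose downward jumps occur precisely at the weights of the edges of $\mathrm{MSF}(G_{\hat g})$. Each of these is routine, but together they are what makes Step~1 the part that requires care; once it is in place, the remainder of the proof is a short algebraic manipulation. As a sanity check, one can verify that the bound is often strict (e.g.\ a $3$-edge gate with all three size-$2$ sub-gates having equal positive $\ell_\tau$ produces $|\calS|$ strictly less than $\sum_\tau \ell_\tau$), which is consistent with the claim being an inequality rather than an equality.
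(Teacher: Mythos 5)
Your proof is correct, and it takes a genuinely different route from the paper. The paper's argument for \lemref{lengthlossspeed} works one gate $\tau$ at a time, bounds the local loss crudely by the sum over all pairs, $|\calS_\tau|\le\sum_{i<j}s_{i,j}$ (its Eq.~\eqref{Eq:Loss-Estimate}), and then tries to re-organize the double sum into $\sum_{\tau'\subseteq\tau}\ell_{\tau'}$ via Eq.~\eqref{Eq:Re-Organize}. You instead compute the local loss \emph{exactly} as the weight of a maximum spanning forest of the complete weighted graph on the directions of the gate, and then bound the count $|\{e\in\mathrm{MSF}:e\subseteq\hat\tau\}|$ by $|\hat\tau|-1$ using the fact that the restriction of a forest to a vertex subset is a forest. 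This is sharper: it explains precisely where the slack in the inequality comes from (the MSF omits $\binom{k}{2}-(k-1)$ of the pairwise identifications), and your sanity check with three equal size-$2$ subgates illustrates exactly this.

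In fact, the extra precision is not merely cosmetic. If you expand the crude bound in the paper directly, you find
\[
\sum_{i<j}s_{i,j}=\sum_{i<j}\sum_{\hat\tau\supseteq\{e_i,e_j\}}\frac{\ell_{\hat\tau}}{|\hat\tau|-1}
=\sum_{\hat\tau\subseteq\tau}\binom{|\hat\tau|}{2}\frac{\ell_{\hat\tau}}{|\hat\tau|-1}
=\sum_{\hat\tau\subseteq\tau}\frac{|\hat\tau|}{2}\,\ell_{\hat\tau},
\]
which strictly exceeds $\sum_{\hat\tau\subseteq\tau}\ell_{\hat\tau}$ as soon as some sub-gate of size at least $3$ carries positive $\ell$. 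Thus the paper's intermediate identity Eq.~\eqref{Eq:Re-Organize} (which asserts $\sum_{j>i}s_{i,j}=\sum_{\tau'\in\T^i_\tau}\ell_{\tau'}$, justified by counting ``once for every $e_j\in\tau'$ with $e_j\ne e_i$'' and forgetting the constraint $j>i$) cannot be an equality, and the crude upper bound for $|\calS_\tau|$ is genuinely too weak to close the argument. Your MSF characterization together with the forest-restriction bound is exactly what is needed here, so the two approaches are not interchangeable: yours is the one that actually yields the stated inequality. The only thing to add for a complete write-up is the careful verification you already flagged, namely that the equivalence classes at radius $r$ in the quotient coincide with the components of the threshold graph $G_{\hat g,r}$ and that $\int_0^\infty\mathrm{rank}(G_{\hat g,r})\,dr$ equals the MSF weight; both are routine but should be spelled out.
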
 

\begin{proof}
We organize the argument by considering one gate $\tau$ and its sub-gates
only. The length losses at different gates add up, hence, it is sufficient to prove
the lemma one gate at the time. For the rest of the argument, let $\tau$ be
a fixed gate in $\T$. 

By an $\ep$--neighbourhood of a gate $\tau$ we mean the intersection of 
an $\ep$--ball around the vertex associated to $\tau$ with the edges associated 
to $\tau$. For $\ep$ small enough, the $\ep$--neighbourhood of $\tau$ is a tree with one
vertex $v$ and $|\tau|$ edges $e_1, \dots, e_{|\tau|}$ of size $\ep$. We denote
the sub-gate consisting of edges $e_i$ and $e_j$ by $\tau_{i,j}$ and 
use a shorthand $s_{i,j}$ to denote $s_{\tau_{i,j}}$. Choose $t>0$ small enough so that, 
for every $1 \leq i, j \leq |\tau|$, $t \, s_{i,j} < \ep$. 

The image of this $\ep$--neighbourhood in $\overline{x}_t$ is a quotient of
the $\ep$--neighbourhood of $\tau$ after identifying $e_i$ and $e_j$ along 
a segment of length $s_{i,j}$ starting from the vertex $v$. Therefore, the total 
length loss at $\tau$, $|\calS_\tau|$, has the following upper-bound
\begin{equation} \label{Eq:Loss-Estimate} 
|\calS_\tau| \leq \sum_{i=1}^{|\tau|}  \sum_{j=i+1}^{|\tau|} s_{i,j}. 
\end{equation}
We have an inequality here because some identifications maybe redundant; if 
a segment of $e_1$ is identified with both $e_2$ and $e_3$, identifying $e_2$ and $e_3$
along this subsegment does not cause any further length loss. We re-organize the above 
sum as follows: let $\calT_\tau^i$ be the set of sub-gates $\tau'$ of $\tau$ where $e_i$
is the edge in $\tau'$ with the smallest index. We claim
\begin{equation} \label{Eq:Re-Organize}
\sum_{j=i+1}^{|\tau|} s_{i,j} 
 = \sum_{j=i+1}^{|\tau|} \sum_{\tau' \supseteq \tau_{i,j} } \frac{l_{\tau'}}{|\tau'|-1}
 = \sum_{\tau' \in \T_\tau^i} \ell_{\tau'}. 
\end{equation}
This is because, for every $\tau' \subseteq \tau$, the term $\frac{l_{\tau'}}{|\tau'|-1}$ 
appears exactly $(|\tau'|-1)$--times in the above sum, once for every $e_j \in \tau'$ 
where $e_j \not = e_i$. 
Combining \eqnref{Eq:Loss-Estimate} and \eqnref{Eq:Re-Organize}, we have
\[
|\calS_\tau| \leq \sum_{i=1}^{|\tau|} 
    \sum_{\tau' \in \T_\tau^i} \ell_{\tau'} 
  = \sum_{\tau' \subseteq \tau} \ell_{\tau'}. 
\]
This finishes the proof. 
\end{proof}
 
\begin{lemma} \label{Vanishing-Path} 
Let $v \subset T_x$ be a vanishing path and let $\Theta_v$ be the set of sub-gates in $T_x$ that appear along $v$.
\[
|v|_x \leq 2 \sum_{\sigma \in \Theta_v} s_{\sigma}. 
\]
\end{lemma}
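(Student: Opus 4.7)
The plan is to mimic the derivation of \lemref{Length-Derivative} for the path $v$ in place of a loop, and then to use the vanishing hypothesis to convert the length identity into the desired bound. Write $v = e_1 \cup \cdots \cup e_k$ as a concatenation of edges of $T_x$ meeting at internal vertices, where the turn at each such vertex is an illegal turn lying in a sub-gate $\tau_i$; then $\Theta_v = \{\tau_1, \ldots, \tau_{k-1}\}$, and because $T_x$ is a tree these sub-gates occur at distinct vertices.

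The first step is to compute the length of the geodesic representative of $v$ in the folded graph $\bar x_t$. Just as in the proof of \lemref{Length-Derivative}, the folding at each $\tau_i$ identifies the length-$t s_{\tau_i}$ initial segments of $e_i$ and $e_{i+1}$ at their common vertex; the geodesic representative of $v$ in $\bar x_t$ then retracts the resulting back-and-forth along these identified segments, excising $2 t s_{\tau_i}$ of length at each turn. Prior to any turn being exhausted this gives
\[
|v|_{\bar x_t} = |v|_x - 2t \sum_{\sigma \in \Theta_v} s_\sigma.
\]
I would then invoke the vanishing hypothesis: since the endpoints of $v$ are identified by the full quotient $\bphi \from T_x \to T_{\bar y}$, the geodesic representative of $v$ in $\bar x_t$ shrinks to length zero at some time $t_v > 0$ along the folding. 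Setting $|v|_{\bar x_{t_v}} = 0$ in the displayed formula yields $|v|_x = 2 t_v \sum_{\sigma \in \Theta_v} s_\sigma$.

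The hard part will be showing $t_v \leq 1$. The heuristic is that the speed assignment $\calS$ was engineered precisely so that at time $t = 1$ the folding $\bphi_1$ has realized all identifications of the full quotient $\bphi$: indeed, the definition $s_\tau = \sum_{\hat\tau \supseteq \tau} \ell_{\hat\tau}/(|\hat\tau|-1)$ apportions the length-loss integral $\ell_{\hat\tau}$ among the size-two sub-gates of $\hat\tau$, and by \lemref{lengthlossfunction} together with \lemref{lengthlossspeed} the total length-loss rate $|\calS|$ does not exceed $|x| - |\bar y|$, so unit time suffices to collapse each pre-image class $\Pre(p)$ to a point. I would make this precise by tracking cascading identifications at gates of size $\geq 3$ and by handling any turns of $v$ that are exhausted before $t_v$ through an induction on $k$ in which $v$ is broken into subpaths at each exhaustion. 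Once $t_v \leq 1$ is established, the conclusion
\[
|v|_x = 2 t_v \sum_{\sigma \in \Theta_v} s_\sigma \leq 2 \sum_{\sigma \in \Theta_v} s_\sigma
\]
is immediate.
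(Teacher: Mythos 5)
Your approach is genuinely different from the paper's and, unfortunately, it does not close. The paper's proof is purely static: it shows that for every $p \in \Phi(v)$ the combinatorial weights satisfy $1 \leq \sum_{\sigma \in \Theta_v} \sum_{\hat\sigma \supseteq \sigma} c(\hat\sigma, p)/(|\hat\sigma|-1)$ (using that $v \subset \CH(p)$), multiplies by $2$ because $\Phi|_v$ is $2$-to-$1$ onto $\Phi(v)$, and integrates over $\Phi(v)$ to land directly on $2\sum_\sigma s_\sigma$. No folding dynamics enter at all. By contrast, your plan tracks the length of $v$ along the folding path $\bar x_t$ and tries to locate a collapse time $t_v \leq 1$.

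Two concrete problems defeat the dynamical route. First, the folding quotient $\bar x_t$ is taken modulo the \emph{equivalence relation generated} by the size-two identifications, so at a gate of size $\geq 3$ transitivity can identify $e_i$ and $e_{i+1}$ past $t\,s_{\tau_i}$ whenever the other sub-gates of the same gate are assigned larger speeds. The correct relation is therefore $|v|_{\bar x_t} \leq |v|_x - 2t\sum_{\sigma\in\Theta_v} s_\sigma$, not equality; evaluating at a putative collapse time $t_v$ then gives $|v|_x \geq 2t_v\sum_\sigma s_\sigma$, which is the \emph{wrong} direction even if $t_v \leq 1$ held. Second, the path $\bar x_t$ (and hence the identity you want to propagate) is only defined for $t$ in a small initial interval $[0,t_1]$ where the combinatorial type is stable; continuing past $t_1$ requires recomputing the speed assignment against the new $\bar y$, so $t_v$ is not even well-defined relative to the original $\calS$. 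The heuristic that "unit time suffices to realize all identifications of $\bphi$" conflates the length-loss bookkeeping of \lemref{lengthlossfunction} and \lemref{lengthlossspeed} with a genuine claim about the dynamics of $\bphi_t$, and nothing in the paper supports the latter. I would recommend switching to the integration argument: establish the pointwise inequality on $\Phi(v)$ first, then integrate; this sidesteps the folding path entirely.
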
 

\begin{proof}
Since $v$ is a vanishing path, $\Phi$ sends its end-points to some point $p \in T_\by$. 
We claim
\[
1 \leq \sum_{\sigma \in \Theta_v} \sum_{\hat \sigma \supseteq \sigma} 
    \frac{c(\hat \sigma,p)}{|\hat \sigma|-1} . 
\]
This is because $v$ is contained in $\CH(p)$. If $v$ passes through an edge $e$
of $\CH(p)$ with an illegal turn, then the sum 
\[
1 = \sum_{\sigma \in \Theta_e} c(\sigma, p) \leq \sum_{\sigma \in \Theta_v} c(\sigma, p) 
\]
and the claim follows. Otherwise, a sub-gates $\sigma \in \Theta_v$ is contained
in a sub-gate $\bar \sigma$ that appears in $\CH(p)$, where all the associated edges 
are legal, and hence, following the algorithm, $c(\hat \sigma, p) = |\hat \sigma|-1$. 
Again the claim follows. Note that the map $\Phi$ from $v$ to $\Phi(v)$ is 2-to-1. Hence
\begin{align*}
|v|_x &=  \int_{\Phi(v)} 2 \, dp \\
  &\leq 2 \int_{\Phi(v)}  \sum_{\sigma \in \Theta_v} \sum_{\hat \sigma \supseteq \sigma}
     \frac{c(\hat \sigma,p)}{|\hat \sigma|-1}  \, dp 
      \tag{Using the claim}\\
 & \leq 2  \sum_{\sigma \in \Theta_v} \sum_{\hat \sigma \supseteq \sigma}
      \int_{T_\by}  \frac{c(\hat \sigma,p)}{|\hat \sigma|-1}  \, dp 
      \tag{Enlarging the domain of integration} \\
 & =  2  \sum_{\sigma \in \Theta_v} \sum_{\hat \sigma \supseteq \sigma}
      \frac{\ell_\sigma}{|\hat \sigma|-1} = 2 \sum_{\sigma \in \Theta_v} s_{\sigma}. 
\end{align*}
And we are done. 
\end{proof}

\begin{proof}[Proof of \thmref{Maximal-Principle}]
Let $\calS$ be the speed assignment defined in \eqnref{Eq:Speed}, let $t_1>0$ be a time 
for which the folding with the speed $\calS$ is defined (see  \propref{Prop:Fold}) and let 
$\alpha$ be any loop. Denote the geodesic representative of $\alpha$ in $x$
with $\alpha_x$ and in $\by$ with $\alpha_\by$. Then $\alpha_x$ can be
sub-divided to segments $u_1 \cup w_1 \cup ... \cup u_m \cup w_m$ so that,
for $i=1, \dots, m$, 
\begin{itemize}
\item The segments $u_i$ are vanishing paths in $x$.  
\item The segments $\Phi(w_i)$ are immersed and $\alpha_\by = \cup_i \Phi(w_i)$. 
\end{itemize}
In particular, 
\[
|\alpha|_x = |\alpha|_\by + \sum_i |v_i|_x.
\] 
Let $v_i$ be a lift of $u_i$ to $T_x$. We can assume $v_i$ are completely disjoint 
from each other. Since $w_i$ are all legal, there is a one-to-one correspondence 
between sub-gates in $\T_\alpha$ and in $\cup_i \Theta_{v_i}$. 
Thus, \lemref{Vanishing-Path} implies
\begin{equation}\label{vanishinginequality}
\sum_i |u_i|_x = \sum_i |v_i|_x \leq 2 \sum_i \sum_{\sigma \in \Theta_{v_i}} s_\sigma
= 2 \sum_{\tau \in \T_\alpha} s_\tau.
\end{equation}

We have
\begin{align*}
\frac{|\alpha|_x - \sum_i |u_i|_x}{|\by|} 
  &= |\alpha|_y = |\alpha|_x + ( |\alpha|_y - |\alpha|_x)\\
|\alpha|_x - \sum_i |u_i|_x 
  & = |\alpha|_x |\by| + ( |\alpha|_y - |\alpha|_x) |\by|. \\
\intertext{By \eqnref{vanishinginequality}, we replacing $\sum_i |u_i|_x$ with $2 \sum_{\tau \in \T_\alpha} s_\tau$, we get} 
|\alpha|_x (1 - |\by|)  - 2 \sum_{\tau \in \T_\alpha} s_\tau 
 & \leq ( |\alpha|_y - |\alpha|_x) |\by|.\\
\intertext{But $(1 - |\by|) =\sum_{\tau \in \T} \ell_\tau \geq |\calS|$, therefore}   
|\alpha|_x - 2  \sum_{\tau \in \T_\alpha} \frac{s_\tau}{|\calS|} 
    & \leq ( |\alpha|_y - |\alpha|_x) \frac{ |\by|}{1- |\by|}. 
\end{align*} 
Note that
\[
d(x,y) = \log \frac 1{|\by|}  
\ \Longrightarrow\
 |\by| = e^{-d(x,y)}
\ \Longrightarrow\ 
\frac{ |\by|}{1- |\by|} = \frac 1{e^{d(x,y)} -1}. 
\]
Hence, letting $V_\alpha=2 \sum_{\tau \in \T_\phi(\alpha)} \frac{s_\tau}{|\calS|}$, 
we have 
\begin{equation} \label{Eq:V}
|\alpha|_x -V_\alpha \leq \frac{ |\alpha|_y - |\alpha|_x}{ e^{d(x,y)} -1}. 
\end{equation}

Now re-parametrize the folding path with arc-length and denote the
new parameter with $s$. Solving the 
differential equation given in \lemref{Length-Derivative}, we have
\begin{equation}
\dot{|\alpha|}_s = |\alpha|_s - V_\alpha 
\quad\Longrightarrow\quad
|\alpha|_s = (|\alpha|_x-V_\alpha)e^s + V_\alpha. 
\end{equation}
Note that, if $|\alpha|_y \leq |\alpha|_x$ then by \eqnref{Eq:V} the derivative of
the length is negative and $|\alpha|_t \leq |\alpha|_x$.
If $|\alpha|_y \geq |\alpha_x$ then,
\begin{align*}\label{modify}
|\alpha|_s &= (|\alpha|_x-V_\alpha)e^s + V_\alpha\\
  & =  (|\alpha|_x-V_\alpha)(e^s-1) + |\alpha|_x\\
\tag{\eqnref{Eq:V}}  
&\leq   ( |\alpha|_y - |\alpha|_x) \frac{e^s-1}{e^{d(x,y)} -1} + |\alpha|_x\\
\tag{$s \leq d(x,y)$} & \leq |\alpha|_y.
\end{align*}
That is, in either case, $|\alpha|_t \leq \max\big( |\alpha_x, |\alpha|_y\big)$. 
This finishes the proof. 
\end{proof}

\section{decorated difference of markings map} \label{Sec:Decorated} 
In \secref{weaklyconvex}, we constructed a balanced folding path starting from $x$ 
towards $y$ 
assuming that there is a difference of markings map $\phi \from x \to y$ such that 
$x_{\phi} = x$. In general, such a difference of marking map does not exit. In this section,
we modify the difference of markings map such that, for any two points $x, y \in \CV$, one 
can still define a notion of a folding path from $x$ to $y$. Such a modified difference of 
markings map will be called \emph{a decorated difference of markings map}.
Recall that, given $\phi \from x \to y$, one can construct a standard geodesics where 
first every edge $e$ outside of $x_{\phi}$ is shortened. For a decorated difference of 
markings map, we instead create an illegal turn in the interior of the edge $e$. 
The folding of that illegal turn effectively shortens the length of $e$. 

\subsection{Decorating the graphs}
Consider a pair of points $x, y \in \CV$ and an optimal map $\phi \from x \to y$.
Recall that $x_{\phi}$ is the tension subgraph of the map $\phi$, i.e. the subgraph
of $x$ consisting of edges that are stretched by a factor $\lambda$, where
$\log \lambda=d(x,y)$. We start by decorate the graph $x$(See \figref{decorate} for the decoration described here). 
 
Let $e$ be an edge outside of $x_{\phi}$, say connecting $v_0$ to $v_3$.  
Add two subdividing vertices $v_1$ and $v_2$ to $e$ so that the following holds:
Let $e_{i,j}$ denote the edge with end vertices $v_i$ and $v_j$. Then, 
we require that 
\[
|e_{0,1}|_x = |e_{1,2}|_x, \qquad\text{and}\qquad
\lambda \, |e_{2,3}|_x = |\phi(e)|_y. 
\]
This is always possible since $\lambda |e|_x < |\phi(e)|_y$. 
We call $v_1$ and $v_2$ \emph{pseudo-vertices} and refer to the graph $x$ 
with all the pseudo-vertices added as $x^d$.

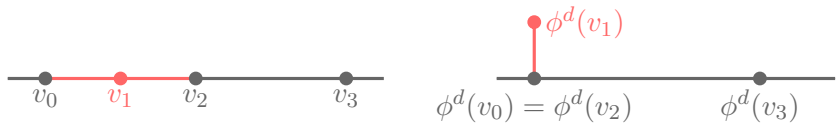
\begin{figure}[h!]
\begin{tikzpicture}

\filldraw [color=red!60, very thick]
(1,0) -- (2,0) circle (2pt) node[align=center, below] {$v_1$} -- (3,0) ;

\filldraw [color=black!60, very thick] 
 (.5,0) -- (1,0) circle (2pt) node[align=left,   below] {$v_0$} ;

 \filldraw [color=black!60, very thick]
(3,0) circle (2pt) node[align=center, below] {$v_2$} --
(5,0) circle (2pt) node[align=right,  below] {$v_3$} -- (5.5,0);



\filldraw [color=red!60, very thick]
(7.5,0) -- (7.5, 0.75) circle (2pt) node[align=center, right] {$\phi^d(v_1)$};

 \filldraw [color=black!60, very thick]
(7,0) -- (7.5,0) circle (2pt) node[align=center, below] {$\phi^d(v_0)=\phi^d(v_2)$} --
(10.5,0) circle (2pt) node[align=right,  below] {$\phi^d(v_3)$} -- (11.5,0);

\end{tikzpicture}
\caption{Decoration of edges in $x$ and $y$.}
\label{decorate}
\end{figure} 

Next we decorate $y$.  For every $e$ and vertex $v_0 \in x$ as above, we attach 
a new edge of length  $\lambda |e_{0,1}|_x$ to $y$ at the point $\phi(v_0) \in y$. 
The other end 
of this new edge is incident to an added degree-one vertex. Thus we have added a leaf at 
$\phi(v_0)$. 
We label this edge, oriented away from $\phi(v_0) \in y$, by $\epsilon_e$, and the same edge with 
opposite orientation by $\epsilon^{-1}_e$. 
We do this for every edge in $x$ that is not in $x_\phi$ and denote the resulting decorated 
graph by $y^d$. 

We now modify the optimal map $\phi \from x \to y$ to $\phi^d \from x^d \to y^d$.
Define $\phi^d$ to be a map that is linear on every edge with slope $\lambda$. 
For edges in $x_{\phi}$ the two maps agree. For every edge $e$ 
outside of $x_{\phi}$, we have 
\begin{align*}
\phi^d(e_{0,1}) &= \epsilon_e\\
\phi^d(e_{1,2}) &= \epsilon^{-1}_e\\
\phi^d(e_{2,3}) &= \phi(e)
\end{align*}
Note that, the vertex $v_1$ is a one-gate vertex. 
 
\subsection{Folding paths under decorated difference of markings maps}
We can use the decorated graphs to construct a folding path from $x$ to $y$.  The graphs
$x^d, y^d$ are both constructed from $x, y$ by adding vertices and hairs, respectively. 
The graph  $y$ is the \emph{core graph} of $y^d$. The graphs and 
maps $\phi^d: x^d \to y^d$ define a train track structure on $x^d$ that is slightly 
different from its definition in Section 2: in $x^d$, each point either has two gates or 
is mapped to the end of a leaf. When the context is clear we also write the decorated difference of marking map as $\phi^d \from x \to y$.

Let $\calT$ be the set of sub-gates of $\phi^d$.  As done previously, we can fold 
$x^d$ according to a given speed assignments $\calS= \{s_\tau\}_{|\tau|=2}$
by identifying edges associated to $\tau$ along a sub-segment of length $t\, s_\tau$
for $t$ small enough, to obtain a quotient map $\bphi_t^d \from x^d \to \overline x_t^d$. 
The graph is a union of a core graph $\bx_t \in cv$ and a hair of size $t \, s_\tau$
associated to each $\tau$. Let $x_t$ be the point in $\CV$ that is a normalization 
$\bx_t$ to a graph of total length 1 and let $x^d_t$ be the scaling of $\bx^d_t$
by the same factor. Let $\phi^d_t \from x^d \to x^d_t$ be the composition 
of $\bphi^d_t$ and scaling and let $L\big(\phi^d_t\big)$ denote the Lipschitz constant of 
$\phi^d_t$. By definition of distance,  $\log L\big(\phi^d_t\big)\leq d(x,x_t)$. 
But also, $L\big(\phi^d_t\big)$ is the scaling factor form $\bx_t$ to $x_t$
and the length of any loop that is legal in $x^d$ increases by this factor from 
$x$ to $x_t$. Thus, 
\[
\log L\big(\phi^d_t\big) = d(x,x_t). 
\]

As before, for $t$ small enough, the left over map 
\[
\psi^d_t \from x^d_t \to y^d, \qquad
\psi^d_t = \phi^d \circ (\phi^d_t)^{-1}
\]
is defined because, for small $t$, points that are identified under $\phi_t^d$ are also 
identified under $\phi^d$. Also, since the Lipschitz constant of these maps 
are constant, we have 
\[
L\left(\psi^d_t\right) = \frac{L\big(\phi^d\big)}{L\big(\phi^d_t\big)}
\quad \Longrightarrow\quad 
d(x_t, y) \leq \log L\left(\psi^d_t\right) \leq  d(x,y) - d(x, x_t). 
\]
But $d(x,x_t) + d(x_t, y) \leq d(x, y)$ by the triangle inequality. Therefore,  
\[
d(x,x_t) + d(x_t, y)= d(x, y)
\]
and hence the path $\{x_t\}$ is a geodesic starting from $x$ towards $y$. 
To summarize, similar to \propref{Prop:Fold} we have 

\begin{proposition}\label{Prop:Decorated-Folding}
Given any two points $x$, $y$ in $\CV$ there exists a decorated difference of markings map 
$\phi^d \from x \to y$ such that $x^d_{\phi^d} = x^d$. Furthermore, any speed
assignment $\calS$ defines a geodesic $\gamma \from [0,t_1] \to CV$ starting from 
$x$ towards $y$, for some $t_1>0$. 
\end{proposition}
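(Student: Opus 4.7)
The plan is to organize the verifications that the construction described just above the proposition genuinely yields a decorated map whose tension subgraph is all of $x^d$, and that the associated folding procedure produces a geodesic. Much of the argument has already been assembled in the preceding paragraphs, so the proof is largely a matter of collating these pieces and checking the two assertions explicitly.

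First I would verify that $\phi^d \from x^d \to y^d$ is linear with slope $\lambda = e^{d(x,y)}$ on every edge. On edges of $x_\phi$ this is immediate from the optimality of $\phi$. On a subdivided edge $e = e_{0,1} \cup e_{1,2} \cup e_{2,3}$, the map $\phi^d$ sends $e_{0,1}$ onto $\epsilon_e$, $e_{1,2}$ onto $\epsilon_e^{-1}$, and $e_{2,3}$ onto $\phi(e)$, and the length prescriptions $|\epsilon_e|_{y^d} = \lambda |e_{0,1}|_x$ and $\lambda |e_{2,3}|_x = |\phi(e)|_y$ force slope $\lambda$ on all three pieces. At each pseudo-vertex $v_1$ the two incident edges $e_{0,1}, e_{1,2}$ both map onto $\epsilon_e$, so they share a gate and $v_1$ is a one-gate vertex, consistent with the modified train-track convention for decorated graphs. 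Hence $x^d_{\phi^d} = x^d$.

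Given any speed assignment $\calS = \{s_\tau\}_{|\tau|=2}$, I would choose $t_1 > 0$ small enough that every identification length $t s_\tau$ is strictly shorter than the relevant edge segments, that the combinatorial type of $\bar x_t^d$ is constant on $(0, t_1)$, and that points identified under $\bar \phi_t^d$ remain identified under $\phi^d$. Normalizing $\bar x_t$ to a length-one graph $x_t \in \CV$ and scaling the hair correspondingly gives $x_t^d$ together with the map $\phi_t^d \from x^d \to x^d_t$ and the left-over map $\psi_t^d = \phi^d \circ (\phi_t^d)^{-1}$. All three maps are linear on edges with constant slope, so $L(\phi^d) = L(\phi_t^d) \cdot L(\psi_t^d)$, and combining this multiplicative decomposition with the standard distance inequalities and the triangle inequality forces $d(x, x_t) + d(x_t, y) = d(x, y)$, so $\{x_t\}$ is a geodesic starting from $x$ toward $y$.

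The main obstacle is the sharp equality $d(x, x_t) = \log L(\phi_t^d)$, which is where a lower bound must match the Lipschitz upper bound. The point is that every loop in $x^d$ already lives in the core graph $x$, and the candidate loops theorem applied to $\phi_t^d$ on the core produces a legal loop whose length ratio from $x$ to $x_t$ realizes the scaling factor $L(\phi_t^d)$. Once this calibration is in place, the multiplicative chain of Lipschitz constants closes as above, and the proof runs parallel to that of \propref{Prop:Fold}.
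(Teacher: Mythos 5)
Your argument matches the paper's proof essentially step for step: verify the decoration makes every edge of $x^d$ maximally stretched (constant slope $\lambda$), set up the folding with a speed assignment, decompose $L(\phi^d)$ multiplicatively as $L(\phi^d_t)\,L(\psi^d_t)$, and close with the triangle inequality. One small imprecision worth flagging: in your last paragraph you attribute the existence of a legal loop realizing the stretch factor to Theorem~\ref{candidate}, but the candidate-loops theorem does not produce a legal loop --- what you actually need is that $x_\phi$ carries a genuine train-track structure (at least two gates at every vertex), from which it follows that every edge of $x_\phi$ is contained in an immersed $\phi^d$-legal loop, and any such loop stretches by exactly $L(\phi^d_t)$. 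In fact that lower bound is not logically required here: once you have $d(x,x_t) \le \log L(\phi^d_t)$ and $d(x_t,y) \le \log L(\psi^d_t)$ from the candidate maps, together with $\log L(\phi^d_t) + \log L(\psi^d_t) = \log L(\phi^d) = d(x,y)$, the triangle inequality forces both $\le$'s to be equalities, so the sharp equality you call the ``main obstacle'' is a free consequence of the chain you already set up.
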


\begin{remark} \label{Rem:Decorated} 
The decorated difference of markings map is non-unique in several ways. First, it depends
on the difference of markings map $\phi$ which is not unique. Also, the pseudo-vertices 
can be added at either end or in the middle of an edge. That is, there is a family of 
decorated folding paths connecting any pair of points. 
\end{remark} 

We now prove an analogue of \thmref{Maximal-Principle} for decorated folding paths 
following closely the constructions and arguments of \secref{weaklyconvex}. We define 
the length loss function $\ell_{\tau}$ at each illegal turns based on the decorated 
difference of markings map from $x^d$ to $y^d$ as we do in \secref{weaklyconvex}. 
The associated function $s_{\tau} $ is also identically defined as in \eqnref{Eq:Speed}
and it defines a geodesic from $x$ towards $y$ as in \propref{Prop:Decorated-Folding}. 
We keep track of which illegal turns are from the original graph $x$ and which are at the
pseudo-vertices. Let  $\T^C$ denotes the set of all sub-gates in the undecorated 
graph $x$ (as a subset of $x^d$), let $\T^H$ denotes the set of sub-gates at the 
pseudo-vertices of $x^d$ and let $\T$ be the union of $\T^C$ and $\T^H$. 
We always assume $t \in [0,t_1]$ (\propref{Prop:Decorated-Folding}), in particular,
$x_t$ is in the same simplex at $x$. From $x^d$ to $\bx^d_t$, after time $t$, 
we define analogously:
\begin{equation}\label{definitionofSd}
|\calS|^d = \frac{1-|\bx^d_t|}{t}.
\end{equation}
However, our actual target graph is the core graph $\bx_t$ of $\bx^d_t$. 
Observe that  the length of $\bx_t$ is the length of $\bx^d_t$ minus the length of the set of 
hairs. The length of each hair associated to a sub-gate $\tau$ is $t s_{\tau}$ by 
construction, thus
\begin{equation}\label{difference}
|\bx_t| = |\bx^d_t| - t\sum_{\tau' \in \T^H} s_{\tau'}=|\bx^d_t| - t\sum_{\tau' \in \T^H} \ell_{\tau'}.
\end{equation}
In the second equality, the assertion $s_{\tau'} =\ell_{\tau'}$ comes from the fact that at 
each pseudo-vertex there is only one sub-gate that contains the only illegal turn $\tau$.
We also define the length loss speed $|\calS|$ with respect to $\bx_t$:
\begin{equation}\label{newS}
|\calS| = \frac{1-|\bx_t|}{t}.
\end{equation}
Substitute \eqnref{difference} into \eqnref{newS}, we have 
\begin{equation}\label{newlemma3.5}
|\calS| = |\calS|^d +\sum_{\tau' \in \T^H} \ell_{\tau'}.
\end{equation}
By \lemref{lengthlossfunction} and \lemref{lengthlossspeed} we have 
\[
1-|\by^d| = \sum _{\tau \in \T} \ell_{\tau} > |\calS|^d.
\]
By \eqnref{newlemma3.5} that becomes
\begin{equation}\label{new3.5}
1-|\by^d| = \sum _{\tau \in \T} \ell_{\tau} +\sum_{\tau' \in \T^H} \ell_{\tau'}> |\calS|.
\end{equation}
Now we are ready to prove the main theorem of this section:

\begin{theorem} \label{Thm:Decorated-MP}
Given a difference of markings map $\phi^d \from x^d \to y^d$, $x, y \in \CV$, where 
$x^d_\phi=x^d$, there exists a speed assignment $\calS$ defining a folding path 
$\gamma \from [0,t_1] \to \CV$ starting at $x$ towards $y$ so that, 
for every loop $\alpha$ and every time $t \in [0,t_1]$, 
\[
|\alpha|_t \leq \max \big(  |\alpha|_x, |\alpha|_y \big).
\]
\end{theorem}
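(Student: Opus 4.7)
The plan is to reproduce the proof of \thmref{Maximal-Principle} essentially verbatim, with the appropriate bookkeeping for the decorations. I take $\calS=\{s_\tau\}_{|\tau|=2}$ to be the speed assignment defined in \eqnref{Eq:Speed} using the length loss contributions $\ell_\tau$ of the decorated map $\phi^d$; by \propref{Prop:Decorated-Folding} this produces the geodesic $\gamma\from[0,t_1]\to\CV$ from $x$ towards $y$.

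Next I would verify the decorated analog of \lemref{Length-Derivative}. The geodesic representative $\alpha_x$, viewed in $x^d$, has the same length (subdivision preserves lengths) and decomposes as $u_1\cup w_1\cup\dots\cup u_m\cup w_m$ where the $u_i$ are vanishing paths for $\phi^d$ and the $w_i$ map immersively to $\by^d$. Crucially, the $u_i$ include not only the usual core illegal-turn vanishing paths but also, for every traversal by $\alpha$ of an edge $e$ outside $x_\phi$, the subpath $e_{0,1}\cup e_{1,2}$ at the pseudo-vertex $v_1$. Using $|\calS|$ from \eqnref{newS} and observing that folding a pseudo-vertex illegal turn at speed $s_{\tau'}$ shortens the corresponding core edge by $2ts_{\tau'}$, the derivative computation of \lemref{Length-Derivative} carries over to give
\[
\dot{|\alpha|}_t = |\alpha|_t - 2\sum_{\tau\in\T_\alpha} \frac{s_\tau}{|\calS|}.
\]

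\lemref{Vanishing-Path} is a local statement about the sub-gate structure of $T_{x^d}$ and applies verbatim to each $u_i$ to give $\sum_i |u_i|_x \leq 2\sum_{\tau\in\T_\alpha} s_\tau$. The algebraic manipulation from the proof of \thmref{Maximal-Principle} then goes through provided we have $1-|\by|\geq|\calS|$, where $\by$ is the core of $\by^d$. This is exactly the content of \eqnref{new3.5}: a direct computation at a pseudo-vertex shows that for $p$ on the hair $\epsilon_e$ the convex hull $\CH(p)$ is a single topological edge containing one illegal turn $\tau'\in\T^H$, whence $\ell_{\tau'}=|\epsilon_e|_{\by^d}$ and therefore $|\by^d|-|\by|=\sum_{\tau'\in\T^H}\ell_{\tau'}$; combining this with the decorated version of \lemref{lengthlossspeed} and \eqnref{newlemma3.5} yields the required inequality.

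With these ingredients the conclusion follows exactly as in the proof of \thmref{Maximal-Principle}: re-parametrize $\gamma$ by arc length, solve $\dot{|\alpha|}_s=|\alpha|_s-V_\alpha$ with $V_\alpha=2\sum_{\tau\in\T_\alpha}s_\tau/|\calS|$, and separately handle the case $|\alpha|_y\leq|\alpha|_x$ (where the derivative is non-positive at $t=0$, so $|\alpha|_t\leq|\alpha|_x$) and the case $|\alpha|_y\geq|\alpha|_x$ (where the explicit solution is dominated by $|\alpha|_y$). The main subtlety I expect is the identification of pseudo-vertex length losses with hair lengths; this is what allows us to pass from the decorated inequality $1-|\by^d|\geq|\calS|^d$ of \lemref{lengthlossspeed} to the core version $1-|\by|\geq|\calS|$, and it is the one place the argument genuinely differs from the non-decorated proof.
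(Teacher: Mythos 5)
Your proposal is correct and follows essentially the same route as the paper: define $\calS$ from the decorated length-loss functions, carry over \lemref{Length-Derivative} and \lemref{Vanishing-Path} to $x^d$, and establish the core inequality $1-|\by|\geq|\calS|$ via the identity $|\by^d|-|\by|=\sum_{\tau'\in\T^H}\ell_{\tau'}$ together with \lemref{lengthlossspeed}. If anything, you are slightly more careful than the paper's own write-up, which stops after showing $\dot{|\alpha|}_{t=0}\leq 0$ in the case $|\alpha|_y\leq|\alpha|_x$ and leaves the case $|\alpha|_y\geq|\alpha|_x$ (and the global-in-$t$ statement) implicit by appeal to the argument of \thmref{Maximal-Principle}.
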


\begin{proof}
We follow the proof of \thmref{Maximal-Principle}. Let $\calS$ be the speed
assignment describe above and $\gamma$ be the associated geodesic 
starting from $x$ towards $y$ coming from \propref{Prop:Decorated-Folding}. 
Recall that a decorated difference of markings map fold $x$ into a graph with hairs 
$\bx^d_t$ and where the core graph of $\bar{x}^d_t$ is $\bx_t$. By Lemma 3.2,  
for a given loop $\alpha$,  
\[
\dot{|\alpha|}_{t=0}
 = |\alpha|_x - 2 \sum_{\tau \in \T_\phi(\alpha)} \frac{s_\tau}{|\calS|}.
\]
By \lemref{lengthlossfunction}, 
$$
\sum_{\tau \in \T} \ell_{\tau} = |x| -|\bar{x}|.
$$
However, since our final target is $\by$, we separate the sub-gates $\tau$ in the core graph and the sub-gates $\tau'$ at
the pseudo-vertices. Let  $\T^C$ denotes the set of all sub-gates in the core graph and $\T^H$ denotes the set of sub-gates
at the pseudo-vertices; $\T$ is a union of $\T^C$ and $\T^H$. We have $|\bar{x}| -|\by| = \sum_{\tau' \in \T^H} \ell_{\tau'}$.
Therefore we can rewrite \lemref{lengthlossfunction} as
$$
\sum_{\tau \in \T} \ell_{\tau} + \sum_{\tau' \in \T^H} \ell_{\tau'}= \sum_{\tau \in \T^C} \ell_{\tau} + 2\sum_{\tau' \in \T^H} \ell_{\tau'}= |x| -|\by|.
$$
By \eqnref{new3.5}, instead of \lemref{lengthlossspeed}, we have

$$\
sum_{\tau \in \T} \ell_{\tau} + \sum_{\tau' \in \T^H} \ell_{\tau'} = \sum_{\tau \in \T^C} \ell_\tau + 2\sum_{\tau' \in \T^H} \ell_{\tau'} \geq |\calS|.
$$

Therefore, for every loop $\alpha$ such that $|\alpha|_y \leq |\alpha|_x$, 
\begin{align*}
 |\alpha|_y= \frac{|\alpha|_x - \sum_i |u_i|_x}{|\by|}  &\leq |\alpha|_x\\
 |\alpha|_x - \sum_i |u_i|_x  &\leq |\alpha|_x \, |\by| 
     = |\alpha|_x \left(1 - \sum_{\tau \in \T^C} \ell_\tau - 2\sum_{\tau' \in \T^H} \ell_{\tau'} \right)\\
 |\alpha_x| \left( \sum_{\tau \in \T^C} \ell_\tau +2 \sum_{\tau' \in \T^H} \ell_{\tau'} \right)
   &\leq \sum_i |u_i|_x \leq 2 \sum_{\tau \in \T_\alpha} s_\tau.\\
\intertext{But $| \calS | \leq  \sum_{\tau \in \T^C} \ell_\tau + 2\sum_{\tau' \in \T^H} \ell_{\tau'} $, therefore}   
|\alpha|_x &\leq 2  \sum_{\tau \in \T_\alpha} \frac{s_\tau}{|\calS|}.
\end{align*}
It follows that $|\alpha|_x < 2 \sum_{\tau \in \T_{\alpha}} \frac{s_{\tau}}{| \calS|}$ and 
$\dot{|\alpha|}_{t=0} \leq 0$.
\end{proof}

\section{Construction of the balanced folding path} 
In this section we prove \thmref{Thm:Length} restated below:

\begin{theorem} 
\label{Length} 
Given points $x, y \in \CV$, there exists a geodesic $[x,y]_{\rm bf}$ from $x$ to $y$ 
so that, for every loop $\alpha$, and every time $t$, 
\[
|\alpha|_t \leq \max \big(  |\alpha|_x, |\alpha|_y \big).
\]
\end{theorem}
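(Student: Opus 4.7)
The plan is to upgrade the local statement \thmref{Thm:Decorated-MP}, which only produces a balanced folding segment $\gamma \from [0,t_1] \to \CV$ on a single simplex, to a global geodesic all the way from $x$ to $y$ by concatenating finitely many such segments. Let me describe the construction and then explain why the maximum principle survives concatenation.

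First, I would apply \propref{Prop:Decorated-Folding} to obtain a decorated difference of markings map $\phi^d \from x^d \to y^d$ and a speed assignment $\calS_0$ built from the length-loss contributions $\ell_\tau$ as in \secref{weaklyconvex}. This defines a geodesic segment $\gamma_0 \from [0,t_1] \to \CV$ starting at $x$ and moving towards $y$, and \thmref{Thm:Decorated-MP} ensures that for every loop $\alpha$ and every $t \in [0,t_1]$,
\[
|\alpha|_t \leq \max\bigl( |\alpha|_x, |\alpha|_y \bigr).
\]
At $t_1$ the combinatorial type of the underlying graph changes (either two illegal turns have been completely folded, a new vertex of higher valence appears, or the decorated hairs are fully absorbed); in particular the left-over map $\psi_{t_1}^d \from \gamma_0(t_1) \to y$ exists and $d(\gamma_0(t_1), y) = d(x,y) - d(x,\gamma_0(t_1))$. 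I would then restart the construction at $x^{(1)} := \gamma_0(t_1)$ with target $y$, producing a new decorated folding segment $\gamma_1 \from [t_1,t_2] \to \CV$, and iterate.

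The main obstacle is showing that the process terminates and that the concatenated path $\gamma = \gamma_0 \ast \gamma_1 \ast \cdots$ actually reaches $y$ in finite time. For termination I would argue that at each transition $t_i$ either the underlying simplex of $\gamma(t_i)$ strictly simplifies (fewer illegal turns, or a strictly smaller tension graph after re-decorating) or the left-over Lipschitz constant $L(\psi_{t_i}^d)$ drops to $1$, in which case $\gamma(t_i)=y$ and we stop. Since the combinatorial possibilities for graphs in a bounded region of $\CV$ are finite and each step strictly advances some discrete invariant, only finitely many pieces are needed; this is the standard finiteness argument for folding paths adapted from \cite{FMladen} with the decoration of \secref{Sec:Decorated} inserted where $x_\phi$ is a proper subgraph.

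Once termination is established, the global maximum principle follows by a clean induction on the pieces. Assume inductively that $|\alpha|_s \leq \max(|\alpha|_x,|\alpha|_y)$ for every $s \in [0,t_i]$. Applying \thmref{Thm:Decorated-MP} on the piece $\gamma_i \from [t_i,t_{i+1}] \to \CV$ with endpoints $x^{(i)}=\gamma(t_i)$ and target $y$, for $t \in [t_i,t_{i+1}]$ we have
\[
|\alpha|_t \leq \max\bigl( |\alpha|_{x^{(i)}}, |\alpha|_y \bigr)
\leq \max\bigl( |\alpha|_x, |\alpha|_y \bigr),
\]
the last inequality being the inductive hypothesis at $s=t_i$. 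Taking the union over all pieces and declaring $[x,y]_{\rm bf} := \gamma$, this gives the statement of the theorem. The subtle point deserving care is verifying that the left-over map $\psi_{t_i}^d$ at each junction can itself be re-decorated and re-equipped with the length-loss speed assignment of \secref{weaklyconvex}, so that the hypothesis $x^{(i),d}_{\phi^d}=x^{(i),d}$ needed to invoke \thmref{Thm:Decorated-MP} continues to hold; this is ensured by \propref{Prop:Decorated-Folding}, which guarantees a decorated presentation between any two points of $\CV$.
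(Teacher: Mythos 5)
Your proposal follows the same overall strategy as the paper — iterate the local maximum principle of \thmref{Thm:Decorated-MP}, concatenating balanced folding segments until $y$ is reached, then carry the inequality across pieces by induction — but it rests on a claim the paper explicitly avoids making, namely that the process terminates after \emph{finitely} many pieces. Your termination argument (``each step strictly advances some discrete invariant, hence finitely many pieces'') is not substantiated: the number of illegal turns need not strictly decrease at each transition, the tension graph of the re-decorated left-over map need not strictly shrink, and a folding path can revisit the same combinatorial type of graph many times, so ``finitely many combinatorial possibilities'' does not by itself yield termination. For greedy folding paths there are separate arguments that control the number of cells crossed, but a balanced folding path with a custom speed assignment built from the $\ell_\tau$, restarted with a fresh decoration at every junction, has no such a priori bound; this is precisely why the paper does not assert finite termination.

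The paper's proof handles the possibly infinite concatenation in two steps that your argument omits. First, it observes that all points $x_t$ produced lie in the compact set $\{z : d(x,z)+d(z,y)=d(x,y)\}$, so if the transition times accumulate at some $t^*<d(x,y)$ the limit $x'=\lim x_t$ still lies in $\CV$, and the construction can be restarted from $x'$; this shows the union of times reached is both open to the right and closed, hence all of $[0,d(x,y)]$. Second, because the set $\Sigma$ of transition times is merely a countable well-ordered subset of $[0,d]$ rather than a finite set, the paper replaces your finite induction with transfinite induction on $\Sigma$: at a successor element the estimate is inherited from \thmref{Thm:Decorated-MP} together with the inductive hypothesis exactly as you argue, but at a limit element $s$ one must take $s_i\to s$ with $s_i\in\Sigma$ and use continuity of $\alpha\mapsto|\alpha|_s$ on $\CV$ to pass the bound to $s$. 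Your proof is missing both the compactness argument and the limit-ordinal case; if you cannot independently prove finite termination (which would be a nontrivial result in its own right), you need both of these ingredients for the argument to close.
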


\begin{proof}
Given $x$ and $y$, we consider the decorated graphs $x^d$ and $y^d$ and the
decorated difference of markings map $\phi^d \from x^d \to y^d$. Applying 
\thmref{Thm:Decorated-MP}, we obtain a geodesic $\gamma \from [0,t_1] \to \CV$
starting from $x$ towards $y$. Now we consider the pair of points $x_{t_1}$ and $y$
and apply \thmref{Thm:Decorated-MP} again to continue the geodesic to
an interval $[t_1, t_2]$. Continuing in this way, we either reach $y$ after
finitely many steps or limit to a point $x' \in \CV$. Note that every point $x_t$
along this path has the property that 
\begin{equation} \label{Eq:On-Path}
d(x, x_t) + d(x_t,y) = d(x,y)
\end{equation} 
and the set of such points is a compact subset of $\CV$. Hence, the same holds for $x'$. 
In particular $x'$ is a point in $\CV$ and the geodesic does not exit $\CV$. 

Now, we can apply \thmref{Thm:Decorated-MP} to the pair $x'$ and $y$ and continue 
the geodesic even further, getting closer to $y$. This results in a geodesic 
connecting $x$ to $y$ because if the process stops at some points $x''$ before $y$, 
then $x''$ is still in the compact set defined by 
\eqnref{Eq:On-Path} and we could apply \thmref{Thm:Decorated-MP} again
to go further. 

We re-parametrize this geodesic by arc-length to obtain 
$\bgamma \from [0,d] \to \CV$, $d=d(x,y)$ (and use the parameter $s$ to emphasize
this fact). Let $\Sigma \subset [0,d]$ be the closure of set of times, each of which is an endpoint 
of an interval coming from an application of \thmref{Thm:Decorated-MP}.
Note that, if $s \in \Sigma$, then an interval to the right of $s$ is not in $\Sigma$. 
Hence, $\Sigma$ is a well ordered set. That is, $[0,d]$ is a union of the interiors of
countably many intervals coming from \thmref{Thm:Decorated-MP} and 
the countable, well ordered set $\Sigma$ which includes $0$ and $d$. 

For any loop $\alpha$, we prove the theorem using transfinite induction on $\Sigma$. 
That is, for every time $s \in \Sigma$, we show
\begin{equation} \label{Eq:Main}
|\alpha|_s \leq \max\big(|\alpha|_x, |\alpha|_y \big). 
\end{equation}
The theorem for other times then follows from \thmref{Thm:Decorated-MP}. 

\eqnref{Eq:Main} is clearly true for $s=0$. For any $s \in \Sigma$, assume 
\eqnref{Eq:Main} holds for every $s' \in \Sigma$ with $s' <s$. We need to show that it 
also holds for $s$. There are two cases. If $s$ is an end point of an interval $[s',s]$ 
coming from \thmref{Thm:Decorated-MP}, then, by \thmref{Thm:Decorated-MP}
\[
|\alpha|_s \leq \max\big(|\alpha|_{s'}, |\alpha|_y \big)
\]
and by the assumption of induction 
\[
|\alpha|_{s'} \leq \max\big(|\alpha|_x, |\alpha|_y \big)
\]
and the conclusion follows. 

Otherwise, there is a sequence $s_i \in \Sigma$, with $s_i<s$, so that $s_i \to s$. 
By the assumption of induction, we have
\[
|\alpha|_{s_i} \leq \max\big(|\alpha|_x, |\alpha|_y \big). 
\]
But the length of $\alpha$ is a continuous function over $\CV$. Taking a limit, we obtain the theorem. 
\end{proof}

\section{non-convexity}\label{counterexample}
In this section we present examples that combine to prove Theorem ~\ref{Thm:Negative}. 
Some of the examples are done in low rank free groups, however, they can clearly be 
generalized to higher rank. First we show that there are points in Outer space such that 
no geodesic between them gives rise to convex length functions for all curves.

\begin{proposition} \label{Ex:Non-Convex} 
There are points $x,y \in \CV$ and a loop $\alpha$ so that along any geodesic 
connecting $x$ to $y$, the length of $\alpha$ is not a convex function of distance in $\CV$. 
\end{proposition}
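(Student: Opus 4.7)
The plan is to use \thmref{Thm:Unique} to convert a statement quantified over \emph{all} geodesics into a single explicit calculation. Namely, I will exhibit $x, y \in \CV$ (for some $n \geq 3$) connected by a rigid folding path, so that by \thmref{Thm:Unique} this is the unique geodesic from $x$ to $y$; it then suffices to produce one loop $\alpha$ whose length along that single geodesic fails to be convex as a function of arc length.

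First I would construct a pair $x,y \in \CV$ together with an optimal difference of markings map $\phi \from x \to y$ having tension subgraph all of $x$, whose associated train-track structure has a single non-yo-yo illegal turn $\tau$. I would arrange the edge lengths so that the induced folding path passes through one intermediate simplex at some time $s_0 \in (0, d(x,y))$, at which point a new (again unique, non-yo-yo) illegal turn $\tau'$ appears. By \thmref{Thm:Unique}, the concatenation of these two folding segments is the unique geodesic $\gamma \from [0,d(x,y)] \to \CV$ between $x$ and $y$.

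Next I would choose a loop $\alpha$ satisfying two requirements: $\alpha$ is $\phi$--legal on the first segment (i.e. $\T_\phi(\alpha) = \emptyset$), and on the second segment $\alpha$ crosses the new illegal turn $\tau'$ often enough that, in the notation of the proof of \thmref{Maximal-Principle},
\[
V_\alpha \;=\; 2 \sum_{\tau'' \in \T_{\phi'}(\alpha)} \frac{s_{\tau''}}{|\calS|} \;>\; |\alpha|_{s_0}.
\]
Applying the formula $|\alpha|_s = (|\alpha|_a - V_\alpha)e^{s-a} + V_\alpha$ derived in the proof of \thmref{Maximal-Principle} on each segment separately, on $[0,s_0]$ we have $V_\alpha = 0$, so $|\alpha|_s = |\alpha|_x \, e^{s}$ grows strictly; on $[s_0, d(x,y)]$ the inequality $V_\alpha > |\alpha|_{s_0}$ forces $|\alpha|_s$ to decay strictly toward $V_\alpha$. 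Hence $s \mapsto |\alpha|_s$ attains a strict local maximum at $s_0$, with positive left-derivative $|\alpha|_x e^{s_0}$ and negative right-derivative $|\alpha|_{s_0} - V_\alpha$. This downward kink directly contradicts convexity on any interval of $[0,d(x,y)]$ whose interior contains $s_0$.

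The main obstacle will be constructing explicit edge-length data so that (i) at every time along the concatenated path there is exactly one, non-yo-yo, illegal turn (so that rigidity holds and uniqueness kicks in), (ii) the chosen loop $\alpha$ is legal on the first segment, and (iii) $\alpha$ nonetheless picks up enough illegal crossings of $\tau'$ on the second segment, after the normalization to unit total length, that $V_\alpha > |\alpha|_{s_0}$. I expect that a rank~$3$ example, built on a rose in which $\alpha$ wraps an edge repeatedly around one that only becomes involved in an illegal turn after the first combinatorial change, will produce such a configuration; verifying the two derivative-sign inequalities at $s_0$ is then a direct algebraic check using the formulas for $s_\tau$ and $|\calS|$ from \secref{weaklyconvex}.
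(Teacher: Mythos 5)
Your overall plan---produce a rigid folding path, invoke uniqueness, and verify non-convexity by a single explicit computation---is exactly the strategy the paper uses, so the frame is right. The specific mechanism you propose, however, cannot be made to work. You want a loop $\alpha$ with $\T_\phi(\alpha)=\emptyset$ on the first folding segment that then ``picks up'' crossings of the new illegal turn $\tau'$ after the combinatorial change at $s_0$. But $\T_\phi(\alpha)=\emptyset$ means $\alpha$ is $\phi$-legal, hence maximally stretched: $|\alpha|_s=|\alpha|_x e^{s}$ for \emph{all} $s\in[0,d(x,y)]$, and in particular $\psi_{s_0}(\alpha_{s_0})=\phi(\alpha_x)$ is still immersed in $y$, so $\alpha$ stays $\psi_{s_0}$-legal and never meets $\tau'$. (Equivalently: along a geodesic, a loop that attains the maximal stretch factor on any subinterval attains it everywhere; and on a rigid path $V_\alpha=2k$ for an integer crossing number $k$, so $V_\alpha=0$ forces $k=0$.) The downward kink at $s_0$ you are relying on thus never occurs, and the argument collapses.

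The paper's proof shows that no kink is needed. It works in ${\rm CV}_2$, taking $x$ to be the rose with petals $a,b$ of length $\tfrac12$ each, and $y$ the dumbbell obtained by folding two subsegments of $a$ onto each other (rescaled). This is a \emph{single} rigid folding segment---the unique illegal turn is the one at the wedge vertex inside $a$, which is not a yo-yo since $b$ is also attached there---and uniqueness follows from \remref{Rem:Rank-2}. The loop $\alpha=a$ crosses that illegal turn exactly once, so $V_\alpha=2$, and \lemref{Length-Derivative} gives $\dot{|\alpha|}_s=|\alpha|_s-2$. Since $|\alpha|_x=\tfrac12<2$, the solution $|\alpha|_s = 2-\tfrac32 e^{s}$ is strictly concave, which already kills convexity. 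The useful observation, which you should adopt instead of the kink, is that along any rigid folding segment the ODE $\dot{|\alpha|}_s=|\alpha|_s-V_\alpha$ with locally constant $V_\alpha$ forces $|\alpha|_s$ to be strictly convex when it is growing and strictly concave when it is shrinking; a single shrinking segment, combined with uniqueness of the geodesic, does the job.
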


\begin{proof}
We construct a simple example in ${\rm CV}_2$. Let $a$ and $b$ be generators for
$\F_2$. Let $x \in {\rm CV}_2$ be a graph that consists of two simple loops labeled $a$ 
and $b$, wedged at a vertex $v$ where each loop has length $\frac 12$ (a rose with
two pedals). Let $\by$ be a 
quotient of $x$ obtained by identifying two subsegments of length $\frac 18$ in 
the loop labeled $a$. Then $\by$ is a rank $2$ graph in the shape of a dumbbell
with total length $\frac 78$, where the $b$--loop has length $\frac 12$ and the $a$--loop 
has a length $\frac 14$. Let $y$ be $\by$ rescaled to have length $1$ (by a factor $\frac 87$). 
There is a rigid folding path from $x$ to $y$ because $\by$ was obtained from $x$ by
identifying two sub-edges. Hence, this path  $[x,y]_{\rm f}$  is the unique geodesic 
connecting $x$ to $y$ (see \remref{Rem:Rank-2}). Let $\alpha$ be the loop representing
element $a \in \F_2$. Then
\[
|\alpha|_x = \frac 12 \qquad\text{and}\qquad |\alpha|_y = \frac 87 \cdot \frac 14 = \frac 27. 
\]

Consider the length of $|\alpha|_t$ of $\alpha$ along this folding path. 
By \lemref{Length-Derivative}, the derivative of the length of $\alpha$ at $x$ is:
$$
\dot{|\alpha|_t} \Big|_{t=0} = |\alpha|_x -2 = \frac 12 -2 < 0. 
$$
And since the length of $\alpha$ is decreasing the derivative stays negative. 
In fact, for $s>0$
$$
\dot{|\alpha|_t} \Big|_{t=s} = |\alpha|_s -2 
$$
is a decreasing function as well. Therefore $|\alpha|_t$ is concave along this folding path. 
That is, there is no geodesic between $x$ and $y$ on which the length of $\alpha$ is 
a convex function of distance. 
\end{proof}

We now examine if \thmref{Thm:Length} hold for other geodesics connecting two points
in $\CV$. We start by looking at a general folding path and we show that a folding path
with end points in a small ball can still go arbitrarily far away from the center of the ball.  

\begin{proposition} \label{Ex:General} 
For any $R >0$, there are points 
$x,y,z \in \CV$ and there is a folding path $[y,z]_{\rm ng}$ connecting 
$y$ to $z$ so that 
\[
  y,z \in \Bo(x,2)
  \qquad\text{and}\qquad
  [y,z]_{\rm ng} \not \subset \Bo(x,R).
\] 
\end{proposition}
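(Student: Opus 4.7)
The plan is to construct, for each $R>0$, explicit points $x, y_N, z_N \in \CV$ and a non-greedy folding path $[y_N, z_N]_{\rm ng}$ parametrized by a large integer $N$, so that $y_N, z_N \in \Bo(x,2)$ uniformly in $N$ but the path visits a graph $w_N$ with $d(x, w_N) > R$ for $N$ large. The key phenomenon to exploit is the asymmetry of the Lipschitz metric: $y_N$ and $z_N$ may be close to $x$ in the outgoing direction while $d(y_N, z_N)$ grows unboundedly with $N$, which gives the folding path room to wander far from $x$. The role of non-greediness is to realize this wandering concretely through a particular speed assignment.

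Here is the strategy. First I would pick a convenient base point $x$ in $\CV_n$ for a small rank (likely $n=3$ suffices), probably a rose or a theta graph with comparable edge lengths. I would then construct $y_N$ and $z_N$ so that (i) there is an optimal difference of markings map $\psi_N \from y_N \to z_N$ whose tension subgraph is all of $y_N$ and whose induced train-track structure has at least two distinct illegal turns (so that the speed assignment is genuinely non-trivial), and (ii) there exist difference of markings maps $x \to y_N$ and $x \to z_N$ with Lipschitz constants bounded by $e^2$, uniformly in $N$. Condition (ii) certifies $y_N, z_N \in \Bo(x,2)$ by the definition of $d$, while condition (i) provides the freedom in choosing a speed assignment used in \propref{Prop:Fold}.

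Second, I would set a highly uneven speed assignment: fold one of the two illegal turns at positive speed while keeping the other at speed zero. By \propref{Prop:Fold}, this defines a folding path starting at $y_N$; I then continue folding this first turn until it is entirely absorbed, reaching a graph $w_N$ on the boundary of the simplex of $y_N$, and then turn on the second speed to reach $z_N$. The concatenation is by construction a folding path $[y_N, z_N]_{\rm ng}$. To show $d(x, w_N) > R$, I would exhibit a candidate loop $\alpha$ (in the sense of \thmref{candidate}) whose length $|\alpha|_x$ is bounded independently of $N$ but whose length $|\alpha|_{w_N}$ grows with $N$, because $\alpha$ traverses the edges of $w_N$ that have been forced to become disproportionately long after the collapse of the first gate and the subsequent rescaling to unit total length. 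By \eqnref{sup}, this forces $d(x, w_N) \geq \log(|\alpha|_{w_N}/|\alpha|_x) > R$ for $N$ sufficiently large.

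The main obstacle is balancing the three constraints simultaneously: existence of a genuine folding path from $y_N$ to $z_N$ (tension graph condition), uniform containment of $y_N, z_N$ in $\Bo(x,2)$, and blow-up of $d(x, w_N)$. The last constraint is the easiest to arrange once the construction is set up, since fully folding one gate collapses a definite fraction of the total length of $y_N$, and the ensuing renormalization to total length $1$ stretches the surviving edges by a factor that can be made as large as we wish by making the pre-collapse structure of $y_N$ have many edges in the gate being folded (or, in lower rank, by iterating the construction using a word of length comparable to $N$). The delicate part is verifying (ii), and this is where the asymmetry is critical: $y_N$ and $z_N$ can each be written as a short outgoing deformation of $x$ even though $d(y_N, z_N)$ is large, precisely because $d(y_N, x)$ and $d(z_N, x)$ are allowed to be large.
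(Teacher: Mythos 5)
Your strategy matches the paper's: the proof there also builds explicit roses $x, y, w, z$ in $\CV_3$ (with $y$ having edge labels $ab$, $b$, $cb^m$), and uses a non-greedy speed assignment that first folds the gate $\langle cb^m, b\rangle$ to reach $w$ and only then folds $\langle ab, b\rangle$ to reach $z$, with the distance $d(x,w)$ detected by the loop $a$ via \eqnref{sup}. One small wrinkle in your plan is worth noting: every graph in $\CV$ has total length $1$, so $|\alpha|_{w_N}$ is bounded above by $1$ and cannot ``grow with $N$'' to force $d(x,w_N)\to\infty$ against a fixed base point; the paper's fix is to let $x$ depend on the scale parameter too, giving the $a$--petal of $x$ a tiny length $\ep$ (tied to $\delta$) so that the ratio $|\alpha|_w/|\alpha|_x \gtrsim 1/(3\ep)$ blows up while the candidate loops from $y$ and $z$ stay within a ratio of $e^2$. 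With that adjustment your construction goes through exactly as in the paper.
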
 

\begin{proof}
We construct an example in ${\rm CV}_3$. For higher rank Outer spaces,
one can modify the example to roses with more loops such that the optimal map outside 
of the simple loops labeled $a, b, c$ is identity.

Consider constants $\ep>0$, $\delta>0$ and an integer $m>0$ so that 
\[
\ep \leq \delta \ll 1 \qquad\text{and}\qquad
m \, \delta = 1-3\delta.
\] 
Assume $\F_3$ is generated by elements $a$, $b$ and $c$ and let $x, y, z$ and $w$ 
be points in ${\rm CV}_3$ which are wedges of simple loops with lengths and labels 
summed up in the table below. 
 
 \begin{center} \begin{tabular}{c|cc|cc|cc|cc|}
\cline{2-9}
& \multicolumn{2}{ c| }{$x$}& \multicolumn{2}{ c| }{$y$}& \multicolumn{2}{ c| }{$w$}& \multicolumn{2}{ c| }{$z$}\\ 
\cline{2-9} 
& \text{label} & length & label & length& label & length& label & length \\
\cline{1-9} 
\multicolumn{1}{ |c| }{Edge 1 $\rule[-4mm]{0mm}{10 mm}$ } 
    & $a$ & $ \epsilon$ & $ab$ & $\delta + \delta^2$  
    & $ab$ & $\frac {1+\delta}3$ & $a$ & $\frac \delta2$  \\ 
\cline{1-9}
\multicolumn{1}{ |c| }{Edge 2  $\rule[-4mm]{0mm}{10 mm}$} 
    & $b$ & $\frac 12$ & $b$ &  $\delta$  
    & $b$ & $\frac 13$ & $b$ & $\frac 12$ \\ 
\cline{1-9}
\multicolumn{1}{ |c| }{Edge 3  $\rule[-4mm]{0mm}{10 mm}$} 
    & $c$ & $\frac {1-\epsilon}2$  & $cb^m$ & $1-2\delta - \delta^2$  
    & $c$ & $\frac {1-\delta}3$ & $c$ & $\frac {1-\delta}2$ \\ 
\cline{1-9}
\end{tabular}
\end{center}

Note that if, in $y$, we fold the edge labeled $cb^m$ $m$--times around 
$b$ (without rescaling), we obtain a graph $\overline{w}$ with labels $ab$, $b$ and $c$ 
and lengths $(\delta+\delta^2)$, $\delta$ and 
\[
(1-2\delta - \delta^2) - m \, \delta = (1-2\delta - \delta^2) - (1-3\delta) = 
\delta - \delta^2
\]
which is a graph that is projectively equivalent to $w$ (by a factor of $\frac 1{3\delta}$). 
Similarly, if in $w$, we fold the edge labeled $ab$ once around $b$ (without rescaling), 
we obtain a graph $\overline{z}$ with labels $a$, $b$ and $c$ and
lengths $\frac \delta 3$, $\frac 13$ and $\frac {1-\delta}3$ which a graph 
that is projectively equivalent to $z$ by a factor $\frac 32$. Therefore, 
there is a folding path from $y$ to $z$ that passes through $w$. But
this is not a greedy folding path since the edge labeled $ab$ is not
folded around $b$ in the segment $[y,w]$. 

Let $\alpha$ be the loop representing the element $a \in \F_3$. Then 
\begin{align*}
d(x,y) &=  \frac { |\alpha|_y }{ |\alpha|_x } = \log \frac { \delta + \delta^2}{ \epsilon }  \\
d(x,z) &=  \frac { |\alpha|_z }{ |\alpha|_x } = \log \frac { \delta/2 }{ \epsilon }  \\
d(x,w) &=  \frac { |\alpha|_w }{ |\alpha|_x } = \log \frac { (1+\delta)/3 }{ \epsilon }  
   \geq \log \frac 1{3\ep}. 
\end{align*}
If, for example, we let $\ep =\frac {\delta}{5}$ then $y, z \in \Bo(x, 2)$, but $w$ 
can be made arbitrarily far away by making $\delta$ small. 
\end{proof}

Next we consider standard geodesic paths connecting two points which are the type of geodesics 
most often considered to connect two arbitrary points in $\CV$ (not every pair of points
can be connected via a folding path). The situation is improved
somewhat but, by taking the ball large enough, one can construct examples
where a standard geodesic that has its endpoints in a ball goes arbitrarily far from the ball. 

\begin{proposition}\label{Ex:Standard} 
There exists a constant $c>0$ such that,
for every $R>0$, there are points $x,y,z \in \CV$ and a standard geodesic $[y,z]_{\rm std}$ 
connecting $y$ to $z$ such that 
\[
  y,z \in \Bo(x,R)
  \qquad\text{and}\qquad
  [y,z]_{\rm std} \not \subset \Bo(x,2R-c).
\]   
That is, the standard geodesic path can travel nearly twice as far from $x$ as $y$ and $z$ 
are from $x$. 
\end{proposition}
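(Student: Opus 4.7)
My plan is to reuse the construction from \propref{Ex:General}, with parameters rescaled as a function of $R$, and to observe that the non-greedy folding path $[y,z]_{\rm ng}$ constructed there is already a standard geodesic. A direct calculation shows that the optimal map $\phi \from y \to z$ stretches every edge of $y$ by the common factor $1/(2\delta)$ (for instance, the edge labeled $ab$ of length $\delta + \delta^2$ maps to a loop in $z$ of length $|a|_z + |b|_z = (1+\delta)/2$), and similarly the edges labeled $b$ and $cb^m$ each stretch by $1/(2\delta)$. Hence the tension graph of $\phi$ is all of $y$, the rescaling step $\gamma_1$ in the definition of a standard geodesic is trivial, and $[y,z]_{\rm std}$ may be taken to coincide with the non-greedy folding path through the intermediate point $w$.

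Given $R > 0$, I would set $\delta \approx e^{-R}$ (quantized so that $m := (1-3\delta)/\delta$ is a positive integer) and $\epsilon \approx 2e^{-2R}$, and define $x, y, z, w \in {\rm CV}_3$ exactly as in the table of \propref{Ex:General}. For very small $R$ where these choices do not yield a valid $m$, the conclusion is vacuous provided the constant $c$ is chosen slightly larger than $\log 3$, since then $2R - c < 0$ and $\Bo(x, 2R-c)$ is empty.

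The heart of the argument is the distance computation, for which I focus on the candidate loop $\alpha$ representing $a \in \F_3$. Its lengths are $|\alpha|_x = \epsilon$, $|\alpha|_y = 2\delta + \delta^2$ (with representative $E_1 E_2^{-1}$ in $y$), $|\alpha|_z = \delta/2$, and $|\alpha|_w = (2+\delta)/3$. Under the chosen scaling these yield $|\alpha|_w/|\alpha|_x \approx e^{2R}/3$, giving the lower bound $d(x,w) \ge 2R - \log 3$, while $|\alpha|_y/|\alpha|_x \approx e^R$ and $|\alpha|_z/|\alpha|_x \approx e^R/4$.

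The main obstacle will be establishing the matching upper bounds $d(x,y) \le R$ and $d(x,z) \le R$, needed to place $y$ and $z$ inside $\Bo(x, R)$. By \thmref{candidate} it suffices to check over simple loops, figure-eights, and dumbbells in $x$; a finite case analysis over the subgraphs of the rose $x$ (loops $a, b, c$, figure-eights $ab, ac, bc$, and the dumbbell $abc$) shows that every candidate other than $\alpha$ contributes a ratio of order $O(1)$, dominated by the exponential ratio from $\alpha$. Combined with the lower bound $d(x,w) \ge 2R - \log 3$, this gives $y,z \in \Bo(x,R)$ and $w \in [y,z]_{\rm std} \setminus \Bo(x, 2R-c)$ for the universal constant $c := \log 3 + 1$.
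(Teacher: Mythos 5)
The central claim of your proposal does not hold: the non-greedy folding path through $w$ constructed in \propref{Ex:General} is not a standard geodesic, and so it cannot serve as the $[y,z]_{\rm std}$ the statement requires. You correctly verify that the optimal map $\phi\from y\to z$ stretches every edge of $y$ by the common factor $1/(2\delta)$, so $y_\phi = y$ and the rescaling segment $\gamma_1$ degenerates. But the consequence of this is that the standard geodesic from $y$ to $z$ is precisely the (canonical, greedy) folding path $\gamma^{\phi}$ induced by $\phi$ --- in Bestvina--Feighn's construction, which the paper summarizes in Section 2.6, the second segment $\gamma_2 = \gamma^{\phi'}$ is the folding path in which all illegal turns fold at equal speed. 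The path through $w$ is explicitly \emph{not} that path: in the proof of \propref{Ex:General} the authors emphasize ``this is not a greedy folding path since the edge labeled $ab$ is not folded around $b$ in the segment $[y,w]$.'' At the single vertex of $y$ there is one gate of size $3$ containing the terminal directions of $ab$, $b$, and $cb^m$, and the segment from $y$ to $w$ folds only the sub-gate $\langle cb^m, b\rangle$ while holding $\langle ab,b\rangle$ fixed; the greedy path would fold all three directions together and does not pass through $w$. The non-uniqueness of standard geodesics noted in the paper comes only from the choice of optimal map and from the freedom in how to shorten edges outside the tension graph during $\gamma_1$; once $y_\phi = y$, neither source of freedom is available, and the standard geodesic is just the greedy one. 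So the example you propose realizes statement (2) of Theorem \ref{Thm:Negative}, not statement (3).

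The paper's proof takes a genuinely different route precisely to make $\gamma_1$ nontrivial: it uses the automorphism $\psi(a)=ab$, $\psi(b)=a$, $\psi(c)=c$ and Fibonacci growth to build $y$ and $w$ in the \emph{same} open simplex (same combinatorial labels $\psi^m(a),\psi^m(b),c$, different edge lengths), so that the segment $[y,w]$ is literally a rescaling path in $\Delta_y$, followed by a genuine folding path $[w,z]$. The factor of roughly $2$ in the distance from $x$ to $w$ then comes from the product $F_{m+2}F_{m+3}$ of consecutive Fibonacci numbers, each of which is comparable to $e^R$. If you want to repair your argument you would need a construction of this type --- one in which the tension graph $y_\phi$ is a proper subgraph of $y$ --- rather than reusing \propref{Ex:General}, where your own computation shows $y_\phi = y$.

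As a secondary remark: your recomputation $|\alpha|_y = 2\delta+\delta^2$ and $|\alpha|_w = (2+\delta)/3$ (the paper writes $\delta+\delta^2$ and $(1+\delta)/3$) appears to be the correct reading of the tables, since the geodesic representative of $a$ in a rose with petal $ab$ is $ab\cdot b^{-1}$; this changes the constants but not the qualitative conclusion of \propref{Ex:General}. It does not, however, affect the issue above.
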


\begin{proof}
As before, we construct the example in ${\rm CV}_3$. Let $\psi \in {\rm Out}(\F_3)$ 
be defined as follows
\begin{align*}
\psi(a) &=  ab  &  \psi^{-1}(a) & = b \\
\psi(b) & = a    &  \psi^{-1}(b) &= b^{-1}a\\
\psi(c) & = c    &   \psi^{-1}(c)&= c 
\end{align*}
It is known (and easy to see) that, for any integer $m>0$, the word length of $\psi^m(a)$ 
is $F_{m+3}$ and the word length of $\psi^m(b)$ is $F_{m+2}$, where $F_i$
is the $i$-th Fibonacci number. Similarly, the word length of $\psi^{-m}(a)$ is 
$F_{m+2}$ and the word length of $\psi^{-m}(b)$ is $F_{m+3}$. 
For a large integer $m>0$, let 
\[
\delta = \frac{1}{F_{m+2} + F_{m+3} + 1}
\]
and consider points  $x, y, z, w \in {\rm CV}_3$ which are wedges 
of simple loops and where the lengths and edge labels are summed up in the table. 
  
\begin{center} \begin{tabular}{c|cc|cc|cc|cc|}
\cline{2-9}
& \multicolumn{2}{ c| }{$x$}& \multicolumn{2}{ c| }{$y$}& \multicolumn{2}{ c| }{$w$}& \multicolumn{2}{ c| }{$z$}\\ 
\cline{2-9} 
& \text{label} & length & label & length& label & length& label & length \\
\cline{1-9} 
\multicolumn{1}{ |c| }{Edge 1 $\rule[-4mm]{0mm}{10 mm}$ } 
    & $a$ & $ \delta$ & $\psi^m (a)$ & $ \delta$  & $\psi^m (a)$ & $F_{m+3} \, \delta$ & $a$ & $\frac 13$  \\ 
\cline{1-9}
\multicolumn{1}{ |c| }{Edge 2  $\rule[-4mm]{0mm}{10 mm}$} 
    & $b$ & $\delta$ & $\psi^m(b)$ &  $ \delta$  & $\psi^m(b)$ & $F_{m+2} \, \delta$ & $b$ & $\frac 13$  \\ 
\cline{1-9}
\multicolumn{1}{ |c| }{Edge 3  $\rule[-4mm]{0mm}{10 mm}$} 
    & $c$ & $1-2\delta$  & $c$ & $1-2\delta$ & $c$ & $\delta$  & $c$ & $\frac 13$ \\ 
\cline{1-9}
\end{tabular}
\end{center}

If we let $\overline z$ be the rose with labels $a$, $b$ and $c$ and all edge lengths 
$\delta$, then there is a quotient map $\bphi \from w \to \overline{z}$ that maps the edge of 
$w$ labeled $\psi^m(a)$ to an edge path containing $F_{m+3}$ edges and maps the edge 
of $w$ labeled $\psi^m(b)$ to an edge path containing $F_{m+2}$ edges. The graph $z$
is obtained from $\overline{z}$ by scaling by a factor $\frac 1{3\,\delta}$. Hence, 
$w$ can be connected to $z$ using a folding path and tension graph of 
$\phi \from w \to z$ is all of $w$. The map from $y$ to $w$ is scaling two of the edges
and contracting the third. Hence, the standard geodesic from $y$ to $z$ passes through 
$w$. 

Next, we compute the distance from $x$ to these points. Let $\alpha$ be the loop 
representing the element $a \in \F_3$ and $\beta$ be the loop representing $b \in \F_3$. 
The loop $\alpha$ has a combinatorial length $F_{m+2}$ (which is the word length of 
$\phi^{-m}(a)$) in both $y$ and $w$ and $\beta$ has a combinatorial length $F_{m+3}$ 
(which is the word length of $\phi^{-m}(b)$) in both $y$ and $w$. In particular, 
\[
|\beta|_w \geq F_{m+3} \cdot (F_{m+2}\,\delta)
\]
because the geodesic representative of $\beta$ in $w$ consists of $F_{m+3}$ edges
each having a length of at least $F_{m+2}\,\delta$. We have
\begin{align*}
 d(x,y) & =  \log \frac{|\beta|_y}{|\beta|_x} =  \log \frac{F_{m+3}\, \delta}{\delta} = \log F_{m+3}\\
 d(x,z) & =  \log \frac{|\alpha|_z}{|\alpha|_x} = \log \frac{|\beta|_y}{|\beta|_x} 
     = \log \frac{1/3}{\delta}= \log \frac{1}{3 \delta}\\
 d(x,w) & \geq  \log \frac{|\beta|_y}{|\beta|_x}
    > \log \frac{F_{m+3} \cdot (F_{m+2}\, \delta)} {\delta} = \log (F_{m+3} \, F_{m+2}). 
\end{align*}
We now set $R = \log F_{m+3}$ which is larger than $\log \frac 1{3\delta}$. 
Then, $y,z \in \Bo(x,R)$. There is a constant $c$, (slightly larger than the golden ratio) 
so that
\[
 \log (F_{m+2} \, F_{m+3}) \geq 2  \log (F_{m+3}) - c = 2R-c
\]
which implies $w \not \in \Bo(x, 2R-c)$. This finishes the proof. 
\end{proof}

The most well-behaved geodesic often considered is a greedy folding path. 
In fact, as mentioned in the introduction, the lengths of curves are quasi-convex
function of distance along a greedy folding path. However, we show that a greedy folding 
path with end point inside of a ball may exit the ball. 

\begin{proposition}
For $n\geq 4$ and every $R>0$, there are points $x,y,z \in {\rm CV}_{n+2}$ where $y$ and 
$z$ are connected by a greedy folding path $[y,z]_{\rm gf}$ such that 
\[
y,z \in \Bo(x, R) \qquad\text{but}\qquad [y,z]_{\rm gf} \not \subset \Bo(x, R). 
\]
\end{proposition}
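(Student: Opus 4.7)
\emph{Plan.} My plan is to adapt the construction of Proposition~\ref{Ex:General} to \emph{force} a greedy parametrization of the folding path. The obstruction in Proposition~\ref{Ex:General} is that the non-greedy folding there first unwraps the long word $cb^m$ around $b$ before unwrapping $ab$ around $b$; a greedy folding between the same endpoints would fold all illegal turns simultaneously at equal speed, and the rank~$3$ example may well stay near $x$ in that case. The extra generators provided by $n+2\geq 6$ will let me add enough edges at the central vertex so that every illegal turn must be folded simultaneously, and so that the simultaneous folding produces an intermediate graph which, after rescaling to total length one, is far from~$x$.

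\emph{Construction.} I would take $y$ to be a rose with a single short edge labelled $b$ of length $\ell_b$, several edges labelled $a_i b^{k_i}$ for $i=1,\dots,n$ with $k_1<k_2<\cdots<k_n$ chosen so that the amounts of wrapping around $b$ differ drastically, and one or more ``padding'' edges $c_j$ whose only purpose is to fill out the rank and carry a large portion of the total length. The target $z$ will be the balanced rose with edges $a_i,b,c_j$ of prescribed lengths. Lengths in $y$ are chosen so that the optimal map $\phi\from y\to z$ is linear with constant slope $\lambda$ on every edge and with tension graph equal to all of $y$, and so that $\phi$ induces the single large gate at the central vertex containing the $b$-ends of every edge $a_ib^{k_i}$ together with the $b$-end of $b$. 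The point $x$ will be a rose in which each $a_i$ has length $\epsilon$ for some very small $\epsilon>0$, while $b$ and the $c_j$ carry the remaining length.

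\emph{Following the greedy fold.} Along the greedy folding path, the initial segments of length $t$ at the $b$-ends of all these edges are identified simultaneously. At the first critical time $t_0=\ell_b$, the edge $b$ is completely consumed and exactly one copy of $b$ has been peeled off each $a_ib^{k_i}$, leaving a new rose whose edges are $a_ib^{k_i-1}$ together with a short merged $b$-edge of length $\ell_b$. After renormalizing to total length~$1$, the rose is compressed by a definite factor bounded away from~$1$, and so the edges labelled $a_ib^{k_i-1}$, and in particular the shortest such edge, grow. By choosing the $k_i$, $\ell_b$, and the weights of the padding edges appropriately, I can arrange that the rescaled intermediate graph has a loop representing $a_1\in\F_{n+2}$ of length at least, say, $e^R\cdot 2\epsilon$, whereas at the endpoints $y$ and $z$ the lengths of $a_1$ (traversed via $a_1b^{k_1}\cdot b^{-k_1}$ in $y$, and via its own edge in $z$) can be tuned to stay within $e^R\cdot\epsilon$. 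This places the intermediate graph outside $\Bo(x,R)$ while keeping $y,z\in\Bo(x,R)$.

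\emph{Main obstacle.} The delicate point is simultaneously satisfying four constraints: (i) the optimal map $\phi\from y\to z$ is linear with constant slope and tension graph all of $y$, so that an honest folding path exists; (ii) the illegal-turn structure at the central vertex has only one large gate, so that a greedy folding path is a well-defined folding path (and is in fact essentially the only folding parametrization compatible with the train-track structure); (iii) both $d(x,y)$ and $d(x,z)$ are at most $R$; and (iv) the rescaling forced by collapsing the tiny $b$-edge in the greedy step pushes the length of some loop $\alpha$ which is very short in $x$ above $e^R|\alpha|_x$. Balancing these requires carefully tuning $\epsilon$, $\ell_b$, the exponents $k_i$, and the lengths of the padding edges $c_j$, in the spirit of the parameter choices in Propositions~\ref{Ex:General} and~\ref{Ex:Standard}; the extra generators available in rank $n+2\geq 6$ are precisely what make the padding possible without destroying (i) or (ii).
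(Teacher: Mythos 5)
Your overall strategy (build a $y$, $z$, and an intermediate point $w$ on the greedy folding path, with a center $x$ in which some loop is very short, and show $d(x,w) > \max(d(x,y),d(x,z))$) matches the paper's, and your checklist (i)--(iv) of constraints is the right one. But there is a concrete gap: you declare that the loop $a_1$ --- the one attached to the \emph{smallest} exponent $k_1$ --- grows under the first greedy fold, and this is false. Using \lemref{Length-Derivative} with the greedy speed assignment $s_\tau \equiv 1$, one gets
\[
\dot{|a_1|}_{t=0} \;=\; |a_1|_y \;-\; \frac{2}{|\calS|},
\]
since $a_1 = a_1b^{k_1}\cdot b^{-k_1}$ has exactly one illegal turn. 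In your rose $y$ the big gate contains only the $b$-ends of the $n$ edges $a_ib^{k_i}$ together with one end of $b$, so $|\calS| = n$, while the constant-slope condition forces $|a_1|_y = (2k_1+1)\,\ell_b$ with $\ell_b = 1/\bigl(\sum_i k_i + n + 2\bigr)$. Since $k_1 \le k_i$ for all $i$, one has $\sum_i k_i \ge nk_1$, and then $(2k_1+1)n > 2\bigl(\sum k_i + n + 2\bigr)$ would require $-n > 4$, impossible. So $|a_1|$ strictly decreases on the first greedy interval, and after $b$ is consumed $a_1$ becomes a legal simple loop and grows monotonically only up to $|a_1|_z$; along the whole path $|a_1|_t \le \max(|a_1|_y,|a_1|_z)$ and it never pushes the path out of the ball.

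The fix is to track the loop with the \emph{largest} exponent, and (better) to recruit the ``padding'' generators into the big gate rather than leaving them inert. The paper's construction does exactly this: $y$ has edges $ab^2$, $b$, and $c_ib$ for $i=1,\dots,n$, so all $n$ of the auxiliary generators contribute $b$-ends to the gate. The gate then has $n+2$ directions, $|\calS| = n+1$, and one tracks $\alpha = a$ (via $ab^2\cdot b^{-2}$), for which $|a|_y = 5/(2n+4)$ and $\dot{|a|}_{0} = 5/(2n+4) - 2/(n+1) > 0$ exactly when $n > 3$ --- this is where the hypothesis $n\ge 4$ enters. Your version with an inert padding edge $c_j$ shrinks the gate by one and changes the threshold; it can still be made to work by taking the top exponent large enough (e.g.\ $k_n=3$, $k_i=1$ for $i<n$ already gives $|a_n|_w > |a_n|_y$ for $n\ge 4$), but you must commit to explicit lengths and verify that $a_n$, not $a_1$, realizes the three distances, which the proposal leaves unaddressed. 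Finally, the idea that the exponents should ``differ drastically'' is a red herring --- the paper uses exponents $\{2,1,\dots,1\}$; what matters is that the tracked edge has strictly more wraps than the rest, so that after the first fold it still has a $b$-tail while the others do not.
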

\begin{proof}

Let $x, y, z, w \in {\rm CV}_{n+2}$ be four graphs that are each a bouquets of $n+2$ simple 
loops. Consider $\F_{n+2}$ as being generated by $a$, $b$ and $c_i$, for $i=1, \dots, n$. 
The lengths and the labels of these graphs are described in the table below where
$\ep$ is a small positive number. 

\begin{center} \begin{tabular}{c|cc|cc|cc|cc|}
\cline{2-9}
& \multicolumn{2}{ c| }{$x$}& \multicolumn{2}{ c| }{$y$}& \multicolumn{2}{ c| }{$z$}& \multicolumn{2}{ c| }{$w$}\\ 
\cline{2-9} 
& \text{label} & length & label & length& label & length & label & length\\
\cline{1-9} 
\multicolumn{1}{ |c| }{Edge 1 $\rule[-4mm]{0mm}{10 mm}$ } 
   & $a$ & $\ep$  & $a b^2 $ & $3/(2n+4)$ & $a$ & $1/(n+2)$  & $ab$ & $2/(n+3)$\\ 
\cline{1-9}
\multicolumn{1}{ |c| }{Edge 2  $\rule[-4mm]{0mm}{10 mm}$} 
   & $b$ & $\frac{(1-\ep)}{2}$ & $b$ & $1/(2n+4)$   & $b$ & $1/(n+2)$  & $b$ & $1/(n+3)$\\ 
\cline{1-9}
\multicolumn{1}{ |c| }{Edge $i$  $\rule[-4mm]{0mm}{10 mm}$} 
    & $c_i$ & $\frac{(1-\ep)}{2n}$  & $c_i b$ & $2/(2n+4)$  & $c_i$ & $1/(n+2)$ & $c_i$ & $1/(n+3)$\\ 
\cline{1-9}
\end{tabular}
\end{center}

Note that, the greedy folding path from $y$ to $z$ passes through $w$. In fact, 
$[y,z]_{\rm gf}$ consists of two subsegments, in the first part $c_ib$ and $ab^2$ wrap 
around $b$ simultaneously to reach $w$, and in the second part the edge labeled $ab$
wraps around $b$ to reach $z$. The distance $d(y,w) = \log \frac{2n+4}{n+3}$ and 
the associated stretch factors of edges are
\begin{align*}
\lambda(ab^2) &=\frac{3/(n+3)}{3/(2n+4)} =\frac{2n+4}{n+3}\\
\lambda(b) &= \frac{1/(n+3)}{1/(2n+4)}=\frac{2n+4}{n+3}\\
\lambda(c_ib) &= \frac{2/(n+3)}{2/(2n + 4)}=\frac{2n+4}{n+3}\\
\end{align*}
are all the same. Likewise, the distance $d(w,z) = \log \frac{n+3}{n+2}$ and 
associated stretch factors of edges are
\begin{align*}
\lambda(ab) &= \frac{2/(n+2)}{2/(n+3)}= \frac{n+3}{n+2} \\
\lambda(b) &= \frac{1/(n+2)}{1/(n+3)}= \frac{n+3}{n+2}\\
\lambda(c_i) &=  \frac{1/(n+2)}{1/(n+3)} = \frac{n+3}{n+2}\\
\end{align*}
which again are the same for every edge. 

Next, we measure distances from the center of the ball $x$. Let $\alpha$ be the loop
associated to the element $a \in \F_3$. For $\ep$ small enough, all three distances are 
realized by the stretch factor associated to $\alpha$. That is, 
\begin{align*}
 d(x,y) & =  \log \frac{|\alpha|_y}{|\alpha|_x} = \log\frac{5/2n+4}{\epsilon} = \log \frac{5}{2n\epsilon + 4\epsilon}\\
 d(x,z) & =  \log \frac{|\alpha|_z}{|\alpha|_x} =  \log\frac{1/n+2}{\epsilon}= \log \frac{1}{n\epsilon + 2\epsilon}\\
 d(x,w) & =  \log \frac{|\alpha|_w}{|\alpha|_x} =  \log\frac{3/n+3}{\epsilon}= \log \frac{3}{n\epsilon + 3\epsilon}\\
\end{align*}
But, for all $n\geq 4$, we have 
\[
  \frac{3}{n\epsilon + 3\epsilon} > 
    \max \left( \frac{1}{n\epsilon + 2\epsilon},  \frac{5}{2n\epsilon + 4\epsilon} \right) 
\]
Thus, if we set $R=\frac{1}{n\epsilon + 2\epsilon}$, we have an example
of a greedy folding path with end point in $\Bo(x, R)$ that travels outside the ball. 
\end{proof}

\section{In-coming balls}\label{inball}

In contrast with out-going balls, we prove that in-coming balls are not weakly quasi-convex:

\begin{theorem}
For any constant $R>0$, there are points $x,y,z \in \CV$ such that, 
$y,z \in \Bi(x,2)$ but, for any geodesic $[y,z]$ connecting $y$ to $z$, 
\[
[y,z] \not \subset \Bi(x, R). 
\]
\end{theorem}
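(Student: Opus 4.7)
My plan is to exploit Theorem~\ref{Thm:Unique}: if I can produce $x,y,z$ so that the geodesic from $y$ to $z$ is the unique such geodesic, then to show that every geodesic $[y,z]$ leaves $\Bi(x,R)$ it will suffice to exhibit a single point $w$ on this unique geodesic with $d(w,x)>R$. The asymmetry of the Lipschitz metric will do the rest: since $d(w,x)=\sup_\alpha \log\!\big(|\alpha|_x/|\alpha|_w\big)$, a single loop $\alpha$ with $|\alpha|_x/|\alpha|_w$ very large will certify that $d(w,x)$ is large, whereas the condition $y,z\in\Bi(x,2)$ requires $|\beta|_x/|\beta|_y\le e^2$ and $|\beta|_x/|\beta|_z\le e^2$ for \emph{every} candidate loop $\beta$ in $y$ or $z$---and by Theorem~\ref{candidate} there are only finitely many such loops to check in each case.

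The first step is therefore to build, in ${\rm CV}_3$, a rigid folding path $\gamma\from[0,T]\to{\rm CV}_3$ from $y=\gamma(0)$ to $z=\gamma(T)$ passing through an intermediate point $w=\gamma(s)$ at which a fixed candidate loop $\alpha$ in $y$ (and in $z$) has length of order $\epsilon$ for a small parameter $\epsilon>0$, while $|\alpha|_y$ and $|\alpha|_z$ remain bounded below independently of $\epsilon$. A natural model is to take $y$ to be a rose with petals labeled $a,b,c$ and begin by folding the petals $a$ and $b$ at the wedge vertex, creating a theta subgraph whose two short remaining arcs, of length $\sim\epsilon$, realize the dumbbell candidate $\alpha=ab^{-1}$; the fold must then be continued rigidly (for instance by switching to a fold of $c$ against itself at the wedge, which is a non-yo-yo illegal turn because other edges remain incident to that vertex) and steered into a further combinatorial transition in which $\alpha$ becomes legal, so that by \lemref{Length-Derivative} the length $|\alpha|_t$ subsequently grows exponentially back to a value of order one at $z$. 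Throughout, every point of $\gamma$ must carry exactly one non-yo-yo illegal turn, so that Theorem~\ref{Thm:Unique} certifies $\gamma$ as the unique geodesic from $y$ to $z$.

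The second step is to choose $x$ so that $\alpha$ is realized in $x$ as a loop of moderate length; concretely I would take $x$ to be a rose with petals $a,b,c$ of comparable length, so that the dumbbell $ab^{-1}$ has $|\alpha|_x\sim 2/3$. Then $|\alpha|_x/|\alpha|_w\gtrsim\epsilon^{-1}$, giving $d(w,x)\ge R$ as soon as $\epsilon\le Ce^{-R}$. To finish, I verify $d(y,x),d(z,x)\le 2$ by enumerating the finite list of simple, figure-eight, and dumbbell loops in $y$ and $z$ given by Theorem~\ref{candidate} and checking that each ratio $|\beta|_x/|\beta|_y$ and $|\beta|_x/|\beta|_z$ is at most $e^2$. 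The hard part will be the dual requirement that $\alpha$ be short at $w$ yet long at both endpoints: by \lemref{Length-Derivative}, $|\alpha|_t$ is monotone exponential on each stage with a fixed combinatorial type, so the non-monotonicity of $|\alpha|_t$ must arise from one or more combinatorial transitions between $w$ and $z$, and arranging such transitions while simultaneously keeping the path rigid (one non-yo-yo illegal turn everywhere) is the delicate geometric engineering. The candidate-loop bookkeeping needed to certify $y,z\in\Bi(x,2)$ is a secondary but routine task in rank $3$.
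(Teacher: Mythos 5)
Your high-level plan coincides with the paper's: invoke Theorem~\ref{Thm:Unique} so that uniqueness reduces the problem to exhibiting one bad geodesic, then exploit the asymmetry of the metric by finding a candidate loop that is very short at an intermediate point $w$ but has moderate length in $x$, while checking that the finitely many candidate loops in $y$ and in $z$ keep $d(y,x),d(z,x)\le 2$. The genuine gap is that the construction is never carried out; the ``delicate geometric engineering'' you defer to is the entire content of the proof. Concretely you must (i) produce explicit $x,y,w,z$, (ii) verify that at every point of the folding path there is exactly one illegal turn and it is never a yo-yo, (iii) engineer the combinatorial stage transitions so that by \lemref{Length-Derivative} your chosen loop shrinks exponentially on one stage and then grows exponentially on a later one, and (iv) check all candidate loops in $y$ and $z$ against $x$. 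None of this is done, and the sketch you give (start from the standard rose $y$ labeled $a,b,c$, fold $a$ and $b$ into a theta, then fold $c$ against itself) does not obviously preserve rigidity through the transitions, nor does it obviously terminate at a point $z$ lying in $\Bi(x,2)$.

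It is also worth noting that your plan is harder than necessary: you insist on a \emph{single} loop $\alpha$ that is long at $y$, short at $w$, and long again at $z$. The paper avoids this dip-and-recover entirely by using different candidate loops at the three endpoints. In the paper's example $x,y,w,z$ are all roses at a single wedge vertex, with $y$ carrying composite labels $ab^m$, $b$, $cb^m a$, $w$ carrying $a$, $b$, $cb^m a$, and $z$ the standard rose $a,b,c$; the rigid folding path just wraps one petal around another, so there is always exactly one illegal turn at the wedge and (since other petals are attached there) it is never a yo-yo. The loop $a$ is a single short edge of length about $1/m$ in $w$ but has length about $1/2$ in $x$, so $d(w,x)\gtrsim\log m$; meanwhile the candidate loops $cb^m a$ and $c$ give $d(y,x),d(z,x)<2$. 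In particular $|a|_z=1/3$ is already moderate, so $a$ need not recover to its value at $y$. If you want to stick with a single loop, you would have to pay much closer attention to the transitions in \lemref{Length-Derivative}, and that part of your proposal is not yet a proof.
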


\begin{proof}
We show that there exists a family of balls
and pairs of points $y_m$ and $z_m$ in these balls such that the 
geodesic connecting $y_m$ to $z_m$ is unique and it travels arbitrarily far away 
from the center of the corresponding ball. Since the geodesic is unique, this can be
restated as: every geodesic connecting $y_m$ to $z_m$ travels arbitrarily far from the 
center of the balls.

Fix an integer $m>0$ and, as usual, let $a, b$ and $c$ be generators for $\F_3$. Examples in higher dimension can be adapted from this example by adding loops on which the map is identity along the path. Let $x=x_m$, $y=y_m$ and $z=z_m$ be roses with labels and lengths specified in the table.

\begin{center} \begin{tabular}{c|cc|cc|cc|cc|}
\cline{2-9}
& \multicolumn{2}{ c| }{$x$}& \multicolumn{2}{ c| }{$y$}& \multicolumn{2}{ c| }{$w$}& \multicolumn{2}{ c| }{$z$}\\ 
\cline{2-9} 
& \text{label} & length & label & length& label & length & label & length \\
\cline{1-9} 
\multicolumn{1}{ |c| }{Edge 1 $\rule[-4mm]{0mm}{10 mm}$ } 
    & $a$ & $ \frac 12 - \frac 1m$ & $ab^m$ & $\frac {m+1}{2m+4}$ & $a$ & $\frac {1}{m+4}$  & $a$ & $\frac 13$ \\ 
\cline{1-9}
\multicolumn{1}{ |c| }{Edge 2  $\rule[-4mm]{0mm}{10 mm}$} 
    & $b$ & $\frac 1m$ & $b$ & $\frac {1}{2m+4}$  & $b$ & $\frac {1}{m+4}$  & $b$ & $\frac 13$    \\ 
\cline{1-9}
\multicolumn{1}{ |c| }{Edge 3  $\rule[-4mm]{0mm}{10 mm}$} 
    & $c$ & $\frac 12$  & $cb^m a$ & $\frac {m+2}{2m+4}$ & $cb^m a$ & $\frac {m+2}{m+4}$ & $c$ & $\frac 13$ \\ 
\cline{1-9}
\end{tabular}
\end{center}

Note that, $w$ is obtained from $z$ by wrapping the edge labeled $ab^m$ around
the edge labeled $b$ $m$--times and then scaling by a factor $\frac {2m+4}{m+4}$. 
Throughout this portion, the illegal turn $\langle ab^m, b \rangle$ is the only
illegal turn. Similarly, $z$ is obtained from $w$  wrapping the edge labeled $ab^m$ 
around the edge labeled $a$ once, then around the edge labeled $b$ $m$--times
and finally scaling by $\frac{m+4}{3}$.  Again, during each sub-segment, there is exactly 
one non-yo-yo illegal turn; first $\langle cb^ma, a \rangle$ and next $\langle cb^m, b \rangle$. The illegal turn
is never a yo-yo since there is no cut edge in the graphs along the paths.
The loop labeled $b$ in $y$ is legal throughout and hence is maximally stretched from 
$y$ to $w$ and from $w$ to $z$. Therefore $w$ lies on a rigid folding path from 
$y$ to $z$. By \thmref{Thm:Unique} the folding path is the unique (up to re-parametrization) 
geodesic from $y$ to $z$.

We now compute distance to the center of the ball. For large enough $m$, 
we have 

 \begin{align*}
 d(y, x) &= \log \frac{|cb^m a|_x}{|cb^m a|_y} 
             = \log \frac{\frac{1}{2}+ 1 + \frac{1}{2} - \frac 1 m }{\frac{m+2}{2m+4}}
             = \log \frac{4m^2+ 6m -4}{m^2+2} 
              < \log 5 < 2 \\
  d(w, x) &= \log \frac{|a|_x}{|a|_w} 
              = \log \frac{\frac1 2 - \frac 1 m }{\frac 1 {m+4}}
              = \log \frac{m^2 + 2m -8}{2m} \geq \log \frac m2\\
  d(z, x) &= \log \frac{|c|_x}{|c|_z} = \log \frac{\frac 12}{\frac 13} = \log \frac 32 < 2.  
 \end{align*}
That is, $y, z \in \Bi(x, 2)$ and the distance $d(w, x)$ can be made to be arbitrarily 
large. 
\end{proof}

\bibliographystyle{alpha}

\end{document}